\documentclass[10pt]{article}

\usepackage[left=3.18cm,right=3.18cm,top=2.54cm,bottom=2.54cm]{geometry}
\usepackage{amsfonts}
\usepackage{amsmath}
\numberwithin{equation}{section}
\usepackage{amssymb}
\usepackage{amsthm}
\usepackage{pgfplots}
\pgfplotsset{compat=1.18} 
\usepackage{tikz}
\usetikzlibrary{arrows}
\usepackage{bm}
\usepackage[nottoc]{tocbibind}
\usepackage{graphicx}
\usepackage{appendix}
\usepackage{titletoc}
\usepackage{tikz-cd}
\usepackage{mathtools}
\usepackage{enumerate}
\usepackage{enumitem}
\usepackage{tcolorbox}
\tcbuselibrary{skins, breakable, theorems}
\usepackage[breaklinks,colorlinks,linkcolor=blue,citecolor=red,urlcolor=red]{hyperref}
\usepackage{bookmark}

\usepackage[makeroom]{cancel}

\newtheoremstyle{theo}
{.8em}
{.8em}
{\sffamily}
{}
{\bfseries}
{}
{.5em}
{}
\theoremstyle{theo}
\newtheorem{thm}{Theorem}[section]

\newtheorem{lem}[thm]{Lemma}
\newtheorem{prop}[thm]{Proposition}
\newtheorem{cor}[thm]{Corollary}

\newtheorem{defn}[thm]{Definition}

\theoremstyle{definition}

\newtheoremstyle{rmkit}
{.8em}
{.8em}
{\itshape}
{}
{\scshape}
{}
{.5em}
{}
\theoremstyle{rmkit}

\newcommand{\secff}[0]{\overrightarrow{\textit{\uppercase\expandafter{\romannumeral2}}}}

\newcommand{\Ric}[0]{\textup{Rc}}
\newcommand{\Rm}[0]{\textup{Rm}}

\newcommand{\cL}[0]{\mathcal L}

\newcommand{\cR}[0]{\mathcal R}

\newcommand{\cQ}[0]{\mathcal Q}

\newcommand{\cS}[0]{\mathcal S}

\newcommand{\dd}[0]{\textup{d}}

\begin{document}
	\bibliographystyle{alpha}
	
	\title{\textbf{The Ricci-DeTurck flow on complete manifolds} \\ \large---In memory of Richard S. Hamilton}
	
	\author{
        {Jing-Bin Cai}
    \and
        {Bing Wang}
	}
    \date{}
	\maketitle
    
	\begin{abstract}
       Based on the framework of Koch-Lamm and tensor heat kernel estimates, we obtain a uniform proof of the short-time existence, uniqueness, and continuous  dependence for Ricci flows starting from a complete Riemannian metric with bounded curvature. 
       A new ingredient is an effective continuous dependence estimate without the assumption of injectivity radius lower bound. 
	\end{abstract}
	\tableofcontents


    \section{Introduction}

      The Ricci flow is a family of Riemannian metrics $g_t$ that satisfy the evolution equation
      \begin{align*}
          \partial_t g=-2Rc. 
      \end{align*}
      As invented by Richard S. Hamilton in 1982~\cite{hamilton1982}, it is currently a central topic in geometric analysis. The last four decades witnessed great success achieved by the Ricci flow. 

      The short-time existence of the Ricci flow on a closed manifold was firstly established by Richard S. Hamilton,  using Nash-Moser's implicit function theorem~\cite{hamilton1982implicit}.
      Later, Deturck invented a technique that makes the short-time existence much simpler~\cite{deturck1983deforming}.   
      Instead of considering equation $\partial_t g= -2Rc$, he initiated the study of
      \begin{align}
          \partial_t g =P_{\bar{g}}g= -2Rc + \mathcal{L}_{X} g, 
          \label{RDequ}
      \end{align}
      where $P_{\bar{g}}g$ is the Ricci-Deturck operator and $X=X_{\bar{g}}g$ is a smooth vector field (cf. Section~\ref{sec:compute_RD_operator} for more details).  
      In order to determine $X$, we need to fix a background metric $\bar{g}$.  Then $X^k=g^{ij}(\bar{\Gamma}_{ij}^k-\Gamma_{ij}^k)$. The direct calculation shows that equation (\ref{RDequ}) is strictly parabolic.
      Therefore, the short-time existence of the Ricci-Deturck flow on closed manifolds follows from the standard parabolic equation theory.   Then the Ricci flow can be obtained from the Ricci-Deturck flow by pulling back through the diffeomorphisms generated by $X(t)$. 
    
      If the underlying manifold $M$ is non-compact and $g_0$ has bounded curvature, then the short time existence was first obtained by Shi (cf. Theorem 4.3 of~\cite{Shi1989DeformingTM}).   His proof proceeds by solving the Dirichlet problem on each domain $D_k\times [0,T]$, where $\bigcup_{k=1}^{\infty} \{D_k\}$ is an exhaustion of $M$. He showed that the derivatives of each solution $g_{ij}(k,x,t)$ are uniformly bounded on each compact subset of $M$. Then a global solution is obtained by taking the $C^{\infty}$-limit of these local solutions. 
      Thus, the existence problem was solved.  
      The uniqueness problem was settled by Chen-Zhu~\cite{chen2006uniqueness}. 
 
      A natural question is: if we perturb the initial metrics slightly, shall we have a uniform existence time? Shall the metrics $g(t)$ depend on $g_0$ continuously?  
      Such problems were first studied by M. Simon. 
      In~\cite{simon2002deformation}, he obtained the following results. 
      On a complete manifold $M$, 
      suppose $g_0$ is a $C^0$-metric and $\bar{g}$ is a smooth metric satisfying the curvature bound $|Rm|_{\bar{g}} \leq \Lambda_0$.
      Suppose that
      \begin{align*}
          \frac{1}{1 + \epsilon} \bar g \leq g_0 \leq (1 + \epsilon)\bar g
      \end{align*}
      for some fixed small $\epsilon=\epsilon(n)$. 
      Then the Ricci-Deturck flow (using $\bar{g}$ as the background metric) initiated from $g_0$ exists for a uniform time $T(n,\Lambda_0)$ and satisfies 
     \begin{align*}
         \frac{1}{1 + 2\epsilon} \bar g \leq g_t \leq (1 + 2\epsilon)\bar g, \quad
         \forall \; t \in [0, T]. 
     \end{align*}
     
     Simon's results hold on an arbitrary complete manifold. If we specify the underlying manifold as $\mathbb{R}^n$, Koch-Lamm~\cite{koch2012geometric} introduced a powerful technique, with which they obtained more delicate estimates.
     Let $\bar g=g_E$. Assume $g_t$ is a Ricci-Deturck flow solution with background metric $\bar{g}=g_E$.
     Let $h_t := g_t - g_E$.  Then $h_t$ satisfies the perturbation equation
     \begin{align*}
         \partial_t h - \Delta h = \text{non-linear terms}.
     \end{align*}
    They constructed Banach spaces to solve the integral equation of $h$ associated with the differential equation above. If $h_0$ is sufficiently small, they proved in~\cite{koch2012geometric} the well-posedness of the perturbation equation, through the application of the contraction mapping principle.   In particular,  their results implies that
    \begin{align}
        \|h_t\|_{L^{\infty}(M)} \leq C \|h_0\|_{L^{\infty}(M)}, \quad \forall \; t>0.     \label{eqn:SJ10_1}
    \end{align}

     In the context of~\cite{koch2012geometric}, the background metric is the Euclidean metric $g_E$, which is a static point of the Ricci flow equation.  Up to normalization, it can be replaced by an Einstein metric. This is realized by the work of Bamler.
     In~\cite{bamler2014stability}, using the Einstein metric as a background metric,  Bamler studied the perturbation of the Ricci-DeTurck flow near a hyperbolic Einstein metric $\bar{g}$ with finite volume.  Let $g_t$ be the Ricci-DeTurck flow with the initial metric $g_0$ and the background $\bar{g}$.
     Also, define $h_t := g_t-\bar g$.  Among other things, he proved that $h_t$ also satisfies (\ref{eqn:SJ10_1}), although the hyperbolic Einstein metric may collapse at infinity.  
     
     In \cite{burkhardt2019pointwise}, Burkhardt-Guim considered the more general case by allowing $\bar{g}=\bar{g}_t$ to be a fixed Ricci flow.  When $M$ is closed and $h_t := g_t - \bar g_t$, she also obtained estimates such as (\ref{eqn:SJ10_1}).  The compact assumption for $M$ can be further replaced by
     the assumption of bounded geometry; see~\cite{bahuaud2024convergence}.
     There are many works concerning the smoothing of metrics and related stabilities via Ricci flow. For more information in this direction, see~\cite{chending},~\cite{schnurer2007stability},~\cite{kroncke2019stability},~\cite{deruelle2021stability},~\cite{lamm2021ricci},~\cite{chu2022ricci} and the references therein. \\

     In this paper, we adapt the framework of Koch-Lamm to study the short-time existence, uniqueness, and continuous dependence of the Ricci-Deturck flow on a complete manifold. 
     Our main result is the following theorem. 

    \begin{thm}[\textbf{Main theorem}]
    \label{thm:main}
    Let $(M^n, g_0)$ be a complete Riemannian manifold with bounded curvature $|Rm|_{g_0} \leq \Lambda_0$. 
    Then there exist constants $T=T(n,\Lambda_0)$, $\delta=\delta(n,\Lambda_0)$ and $C=C(n,\Lambda_0)$ with the following property. 

    \begin{itemize}
        \item There exists a unique Ricci flow solution $\{g_t\}_{0 \leq t \leq T}$ initiated from $g_0$. 
        \item For each continuous metric $\hat{g}_0$ satisfying 
        $\displaystyle \sup_M |\hat{g}_0-g_0|_{g_0}<\delta$, there exists a unique Ricci-Deturck flow solution $\{\hat{g}_t\}_{0 \leq t \leq T}$,  
        with background metrics $g_t$ and initial data   $\hat{g}_0$, satisfying
    \begin{align}
       \sup_{M \times [0, T]} |\hat{g}_t-g_t|_{g_t} \leq C \sup_M |\hat{g}_0-g_0|_{g_0}.
    \label{eqn:SJ08_1}    
    \end{align}
    \end{itemize}
    \end{thm}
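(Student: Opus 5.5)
The plan is to run Koch--Lamm's fixed-point scheme on the perturbation $h_t:=\hat g_t-g_t$, with the background $\bar g_t=g_t$ itself a Ricci flow. The Euclidean heat kernel is replaced by the heat kernel of the Lichnerowicz-type operator $\partial_t-\Delta_{g_t}-2\Rm_{g_t}*$ acting on symmetric 2-tensors.

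\textbf{Step 1: the background flow.} First I obtain the background Ricci flow $g_t$ on $[0,T]$, $T=T(n,\Lambda_0)$. For this I take Shi's existence theorem, or equivalently the Ricci--DeTurck flow with static background $g_0$ solved by the same contraction argument below with $\bar g=g_0$ and then pulled back by diffeomorphisms. By Shi's estimates,
\[
|\nabla^k\Rm_{g_t}|\le C_k\Lambda_0\, t^{-k/2},
\]
and the metrics $g_t$ are uniformly equivalent to $g_0$.

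Uniqueness of the Ricci flow among complete bounded-curvature solutions I handle as Chen--Zhu do. Given two flows $g^1_t,g^2_t$, solve the harmonic map heat flow to produce Ricci--DeTurck flows relative to a common background. The difference of these flows then solves the perturbation equation with zero initial data, so uniqueness in the Koch--Lamm space (Step 3) forces it to vanish.

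\textbf{Step 2: the perturbation equation and heat kernel bounds.} With background $g_t$, the difference $h$ satisfies
\[
\partial_t h-\Delta_{g_t}h-2\Rm_{g_t}*h=\nabla^*\big(Q^1(h,\nabla h)\big)+Q^0(h,\nabla h),
\]
where $Q^1=O(|h||\nabla h|)$ and $Q^0=O(|\nabla h|^2+\Lambda_0|h|^2)$, with coefficients depending smoothly on $h$ for $|h|$ small.

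I would use Gaussian upper bounds for the tensor heat kernel $K(x,t;y,s)$ and its first derivatives,
\[
|K|+\sqrt{t-s}\,|\nabla K|\le \frac{C}{\operatorname{Vol}B_{g_0}(x,\sqrt{t-s})}\,e^{-d^2/C(t-s)},\qquad 0<t-s\le T,
\]
obtained from the scalar heat kernel via Kato's inequality and a Duhamel/Gr\"onwall argument for the curvature term. These bounds involve only volume ratios, so no injectivity radius lower bound enters. Bishop--Gromov volume comparison under $|\Rm|\le C\Lambda_0$ then controls sums of Gaussians over annuli at scales $\le\sqrt T$.

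\textbf{Step 3: Koch--Lamm spaces and the contraction.} Define $X$ by the norm
\[
\|h\|_X=\sup_{t\le T}\|h_t\|_{L^\infty}+\sup_{x,\,r^2\le T}\Big(r^{-n}\!\!\int_{P(x,r)}|\nabla h|^2\Big)^{1/2}\Big/\Big(\tfrac{\operatorname{Vol}B(x,r)}{r^n}\Big)^{1/2}+\sup_{t}\sqrt t\,\|\nabla h_t\|_{L^\infty},
\]
and define $Y$ as the corresponding space for $(Q^1,Q^0)$, built from parabolic-ball $L^1$ and $L^\infty$ weighted norms, with the volume normalization in place of $r^n$.

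There are two linear estimates to prove:
\begin{itemize}
\item $\|\text{free evolution of }h_0\|_X\le C\|h_0\|_{L^\infty}$;
\item $\|\text{Duhamel integral of }(Q^1,Q^0)\|_X\le C\|(Q^1,Q^0)\|_Y$.
\end{itemize}
Both reduce to integrating the kernel bounds of Step 2 over parabolic cylinders and dyadic annuli, as in Koch--Lamm. The extra term $\Lambda_0|h|^2$ costs only a factor $T$.

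The quadratic structure gives
\[
\|Q(h)-Q(h')\|_Y\le C\big(\|h\|_X+\|h'\|_X\big)\|h-h'\|_X .
\]
So for $\|h_0\|_\infty<\delta$ the map $h\mapsto e^{tL}h_0+\int K*Q(h)$ is a contraction on a ball of radius $2C\delta$. This yields existence, uniqueness in that ball, and $\|h\|_X\le C\|h_0\|_\infty$, which is (\ref{eqn:SJ08_1}).

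Uniqueness among all continuous-data solutions satisfying (\ref{eqn:SJ08_1}) needs one more argument: any such solution with small $L^\infty$ norm lies in $X$ on short time intervals, by interior parabolic estimates.

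\textbf{Main obstacle.} I expect the hardest step to be the uniform tensor heat kernel and gradient bounds in Step 2, together with the Koch--Lamm $L^2$-gradient estimate, when the manifold may collapse. The Euclidean scaling $r^n$ has to be replaced everywhere by $\operatorname{Vol}B(x,r)$, and one must check that the nonlinear estimates close with these non-homogeneous weights.

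My first attempt would use the volume doubling property at scales $\le\sqrt T$ to compare $\operatorname{Vol}B(y,r)$ with $\operatorname{Vol}B(x,r)$ across Gaussian tails. If that fails, I would pass locally to the universal cover of small balls (via the exponential map, where the injectivity radius is bounded below) and do the estimates there.
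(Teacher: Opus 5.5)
There is a genuine gap, and it sits in the one place your plan treats as routine: the function space $X$ does not close. Your overall architecture does match the paper's: a Koch--Lamm fixed point for $h=\hat g_t-g_t$ over the Ricci-flow background, norms normalized by $|B(x,r)|$ instead of $r^n$, Gaussian kernel bounds with $|B(x,\sqrt{t-s})|^{-1}$, and a harmonic map heat flow for uniqueness.

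\emph{Why your $X$ fails.} The component $\sup_t\sqrt t\,\|\nabla h_t\|_{L^\infty}$ forces you to bound $\sqrt t\,|\nabla\xi|$ for $\xi=\int_0^t\int_M\nabla_yK\,Q^1$, i.e.\ the operator with kernel $\mathcal K=\nabla_x\nabla_yK$.
\begin{itemize}
\item Your Step 2 only gives first-derivative bounds, so it does not even supply $\mathcal K$.
\item Even granting the best possible bound $|\mathcal K|(x,t;y,s)\le \frac{C}{(t-s)\,|B(x,\sqrt{t-s})|}e^{-d^2(x,y)/C(t-s)}$, you get $\int_M|\mathcal K|\,dy\sim (t-s)^{-1}$.
\item With $|Q^1(\cdot,s)|\lesssim s^{-1/2}\|h\|_X^2$, the near-diagonal piece is then $\int_{t/2}^t\frac{ds}{t-s}=\infty$.
\end{itemize}
This divergence is intrinsic, not a weakness of the method. $\mathcal K$ is a parabolic singular-integral kernel, which is unbounded on $L^\infty$. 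Your norm gives no H\"older control of $Q^1=h*\nabla h$ that could exploit its cancellation. So $\Phi$ does not map your ball into $X$, and ``integrating the kernel bounds over cylinders and annuli'' cannot produce the Duhamel estimate.

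\emph{What the paper (and Koch--Lamm) do instead.} They measure $\nabla h$ in $L^{p}(\Omega(x,r^2))$ with $n+2<p<\infty$, taking $p=n+4$, where $\Omega(x,r^2)=B(x,r)\times(\frac{r^2}{2},r^2)$. The condition $p>n+2$ makes the H\"older pairing against $\|\nabla_yK\|_{L^{p'}}$ finite in the $L^\infty$ bound for $\xi$. The condition $p<\infty$ is what allows the singular part to be bounded.

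\emph{The missing key lemma.} You need a uniform Calder\'on--Zygmund inequality
\[
\Big\|\int\!\!\int\mathcal K\,\cS\Big\|_{L^{n+4}(\Omega(x_0,r^2))}\le C\|\cS\|_{L^{n+4}(\Omega(x_0,r^2))}
\]
on manifolds with no injectivity radius bound. The paper proves it in two steps.
\begin{enumerate}
\item $L^2$-boundedness. This combines the local energy estimate for $\xi_S$, which solves $(\partial_t-L)\xi_S=\nabla^*\cS$, with the bound $\|\xi_S\|_{L^2}\le Cr\|\cS\|_{L^2}$. That bound comes from Riesz interpolation between the $L^1$ and $L^\infty$ bounds for $|\nabla_yK|$.
\item Extension to $L^{n+4}$ by Han--Wang's Calder\'on--Zygmund theorem. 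This needs Gaussian bounds on $\nabla_x\mathcal K$ and $\partial_t\mathcal K$, hence control of $\nabla^k\Rm$ up to $k=3$. The paper gets this from Shi-type estimates, and from smoothing $g_0$ in the existence part.
\end{enumerate}
The obstacle you flagged (replacing $r^n$ by $|B(x,r)|$, plus first-order Gaussian bounds) is the easy part. The $L^p$ singular-integral estimate is where collapsing genuinely has to be confronted.
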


    As mentioned, the existence and uniqueness of the Ricci flow was already known due to the work of Shi~\cite{Shi1989DeformingTM} and Chen-Zhu~\cite{chen2006uniqueness}. 
    On a closed manifold or a complete Riemannian manifold with bounded geometry, the estimate (\ref{eqn:SJ08_1}) is known by the work of Burkhardt-Guim~\cite{burkhardt2019pointwise}. 
    The key new ingredient of Theorem~\ref{thm:main} is that the continuous dependence (\ref{eqn:SJ08_1}) holds without the assumption of injectivity radius.

     \begin{cor}\label{cor:main_long_time}
      Suppose $\{g_t\}_{0 \leq t \leq T}$ is a Ricci flow on a complete manifold $M$ with bounded curvature 
      \begin{align}
          \sup_{M \times [0, T]}|Rm|_{g(t)} \leq \Lambda.    \label{eqn:SJ11_1}
      \end{align}
      Then there exist a small constant $\epsilon=\epsilon(n, T\Lambda)$ and a large constant $A=A(n, T\Lambda)$ with the following property.
      
         For each metric $\hat{g}_0$ satisfying $\displaystyle \sup_M |\hat{g}_0-g_0|_{g_0}<\epsilon$, there exists a Ricci-Deturck flow $\{g_t'\}_{0 \leq t \leq T}$, which uses $g_t$ as background and satisfies
      \begin{align}
           \sup_{M \times [0, T]} |\hat{g}_t-g_t|_{g_t} \leq A \sup_M |\hat{g}_0-g_0|_{g_0}. 
           \label{eqn:SJ11_2}
      \end{align}
     \end{cor}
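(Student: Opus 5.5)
The plan is to derive Corollary~\ref{cor:main_long_time} from Theorem~\ref{thm:main} by iterating it on short time intervals and chaining the estimates. Since $|Rm|_{g_t}\leq \Lambda$ on $[0,T]$, we may apply Theorem~\ref{thm:main} at any time $t_0\in[0,T]$ with initial metric $g_{t_0}$ and $\Lambda_0=\Lambda$. This gives constants $T_1=T(n,\Lambda)$, $\delta_1=\delta(n,\Lambda)$ and $C_1=C(n,\Lambda)\geq 1$. By uniqueness of Ricci flow with bounded curvature (the first bullet of Theorem~\ref{thm:main}), the Ricci flow started at $g_{t_0}$ coincides with $\{g_t\}_{t_0\le t\le t_0+T_1}$. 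Hence the background flow in the second bullet is exactly our given flow, restricted to $[t_0,t_0+T_1]$.

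To see that the resulting constants depend only on $(n,T\Lambda)$, rescale first. Setting $\tilde g_s=\Lambda g_{s/\Lambda}$ gives $|Rm|\le 1$ on $[0,T\Lambda]$. The quantities $|\hat g-g|_{g}$ are scale invariant, and the Ricci-DeTurck flow with the rescaled background is the rescaled Ricci-DeTurck flow. So we may assume $\Lambda=1$ and a time interval of length $T\Lambda$. Let $\tau=T(n,1)$, $\delta_1=\delta(n,1)$, $C_1=C(n,1)$, and take $N=\lceil T\Lambda/\tau\rceil$. Split $[0,T\Lambda]$ into intervals $[t_k,t_{k+1}]$ with $t_k=k\tau$, the last one truncated.

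We then run the induction. Choose $\epsilon=\delta_1 C_1^{-N}/2$ and $A=C_1^N$. Start with $\hat g_0$ satisfying $\sup|\hat g_0-g_0|_{g_0}<\epsilon<\delta_1$. Theorem~\ref{thm:main} gives a Ricci-DeTurck flow $\hat g_t$ on $[0,t_1]$ with background $g_t$ and $\sup|\hat g_t-g_t|_{g_t}\le C_1\epsilon$. At $t_1$, the metric $\hat g_{t_1}$ is continuous and $\sup|\hat g_{t_1}-g_{t_1}|_{g_{t_1}}\le C_1\epsilon<\delta_1$. Applying the theorem again at base time $t_1$ continues the flow to $[t_1,t_2]$. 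After $k$ steps the deviation is at most $C_1^k\sup|\hat g_0-g_0|_{g_0}$, which stays below $\delta_1$ for all $k\le N$ by the choice of $\epsilon$. This yields \eqref{eqn:SJ11_2} with $A=C_1^N$. A small technicality is that the theorem's bound is stated with $\le C\sup|\cdot|$ rather than a strict inequality. To keep the hypothesis $<\delta_1$ strict at each step, we use the explicit margin built into $\epsilon$ (the factor $1/2$).

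The step needing the most care is concatenating the pieces into a single Ricci-DeTurck flow on $[0,T]$. At each junction $t_k$, the piece on $[t_{k-1},t_k]$ is smooth for $t>t_{k-1}$, in particular at $t_k$. The next piece starts from this smooth initial datum, and because the background $g_t$ is smooth, it attains its data continuously (indeed smoothly) at $t_k$. The glued family is therefore a solution of the Ricci-DeTurck equation on the open intervals and continuous in time. Parabolic regularity, or equivalently uniqueness of the Ricci-DeTurck flow from the smooth datum $\hat g_{t_k}$ together with the smooth solution obtained by running the flow slightly past $t_k$, then shows the concatenation is smooth across $t_k$. Thus $\{\hat g_t\}_{0\le t\le T}$ is a genuine Ricci-DeTurck flow with background $g_t$, and it satisfies \eqref{eqn:SJ11_2}.
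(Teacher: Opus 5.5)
Your proposal is correct and follows the same route as the paper, which proves the corollary in one line: divide $[0,T]$ into $N$ pieces and apply the estimate of Theorem~\ref{thm:main} repeatedly. Your extra details fill in what the paper leaves implicit: the rescaling to $\Lambda=1$ (which is why the constants depend only on $(n,T\Lambda)$), the explicit choices $\epsilon=\delta_1C_1^{-N}/2$ and $A=C_1^N$, and the remark on gluing the pieces across the junction times.
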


    This Corollary can be regarded as an improvement of the uniqueness of Ricci flow solutions with bounded curvature (cf.~\cite{chen2006uniqueness}). 
    By dividing the time periods into $N$-pieces $\cup_{k=0}^{N-1} [\frac{kT}{N}, \frac{(k+1)T}{N}]$,  the proof of Corollary~\ref{cor:main_long_time} follows from applying (\ref{eqn:SJ08_1}) in Theorem~\ref{thm:main} repeatedly.\\

   Our results are based on the framework work of Koch-Lamm~\cite{koch2012geometric}.
   We sketch the basic steps and key difficulties as follows. \\

   \begin{proof}[Outline of the proof of Theorem~\ref{thm:main}:]

    We first study the short-time existence of the Ricci-Deturck flow.  
    For simplicity, we assume that the initial metric $g_0$ satisfies uniform curvature and curvature derivatives bound up to order $3$ (cf. (\ref{eqn:SI18_1})). More general cases will be discussed in Section~\ref{sec:final}.  Let $g_t$ be a family of the Ricci-Deturck flow, with background metric $\bar{g}=g_0$. Then $h_t=g_t-g_0$ satisfies the differential equation 
    \begin{align}
    (\partial_t-\Delta_L) h=-2Rc(g_0)
    +\cR[h] + \nabla^* \cS[h],    
    \label{eqn:SJ14_1}
    \end{align}
    where $\Delta_L$ is the Lichnerowicz Laplacian, $\cR[h]$ and $\cS[h]$ are quadratic terms of $h$ 
    and $\nabla h$. Let $K(x,t;y,s)=K(x,y,t-s): Sym^2(T_y^*M) \to Sym^2(T_x^*M)$ be the tensor heat kernel with respect to the metric $g_0$.  Then (\ref{eqn:SJ14_1}) is equivalent to the following integration equation 
    \begin{align}
    h(x,t)=\Phi(h)(x,t), 
    \label{eqn:SJ14_2}
    \end{align}
    where 
   \begin{align}
    \Phi(h)(x,t) := 
    \int_0^t \int_{M} \big\{ K(x,t;y,s) (-2Rc(g_0) + \cR [h])(y,s) + \nabla_y K(x,t;y,s) \cS [h](y,s) \big\} \, \dd y \, \dd s. 
   \label{eqn:SJ14_3} 
   \end{align}
   Let $X_T$  be a Banach space of some functions defined on $M \times [0, T]$, with norm $\|\cdot\|_{X_T}$ defined as follows. 
    \begin{align*}
      \|f\|_{X_T} := \sup_{x \in M, r \in (0, \sqrt{T}]} \left\{\| f \|_{L^\infty(P(x,r^2))}
         + \frac{\| \nabla f \|_{L^2(P(x,r^2))}}{|B(x,r)|^{\frac{1}{2}}}
         +r^{\frac{n+2}{n+4}} 
         \frac{\| \nabla f \|_{L^{n+4}(\Omega(x,r^2))}}{|B(x,r)|^{\frac{1}{n+4}}} \right\}, 
   \end{align*}
   where
   \begin{align}
    P(x,r^2) :=B(x,r) \times (0,r^2),  \quad \Omega(x,r^2) :=B(x,r) \times (\frac{r^2}{2}, r^2). 
   \label{eqn:SJ14_4} 
   \end{align}
   Note that the definition of $\|\cdot\|_{X_T}$-norm here is almost the same as that of Koch-Lamm, except for an extra normalization.  A solution of the integration equation (\ref{eqn:SJ14_2}) is exactly a fixed point of $\Phi$, whose existence is guaranteed by showing that $\Phi$ is a contraction mapping in a small ball in $X_T$. 
   For this purpose, as in~\cite{koch2012geometric}, we need to introduce another renormalized norm $\|\cdot\|_{Y_T}$. Then the contraction mapping property is realized by the tensor heat kernel estimates,  the interaction between $\|\cdot\|_{X_T}$ and $\|\cdot\|_{Y_T}$-norm, and the fact $T$ is very small. 
   Since there is no injectivity radius lower bound in our situation, a key difficulty is to set up  the uniform Calder\'on-Zygmund inequality (cf. discussion nearby (\ref{eqn:SJ15_1})):
   \begin{align*}
    \left  \|  \int_0^t\int_{M}  
     \nabla_x \nabla_y K (x, t;y,s)   \cS(y,s)\dd y \dd s \right \|_{L^{n+4}(\Omega(x_0, r^2))}
	\leq  C(n, T)  \|    \cS  \|_{L^{n+4}(\Omega(x_0, r^2))}. 
   \end{align*}
   After we obtain the short-time existence of the Ricci-Deturck flow, 
   a standard diffeomorphism action will transform the Ricci-Deturck flow solution to a Ricci flow solution.  Thus, we finish the proof of the short-time existence of the Ricci flow initiated from $g_0$.

   Then we show the continuous dependence (\ref{eqn:SJ08_1}). By shrinking $T$ slightly if necessary, we know the Ricci flow initiated from $g_0$ has bounded curvature $\Lambda$. Let $\hat{g}_0$ be a metric nearby $g_0$ in $C^0$-sense. Using $\{g_t\}_{0 \leq t \leq T}$ as background, the Ricci-Deturck flow solution $\{\hat{g}_t\}_{0 \leq t \leq T}$ initiated from $\hat{g}_0$ satisfies
   \begin{align}
       (\partial_t-\Delta_L) h= \cR[h] + \nabla^* \cS[h]   \label{eqn:SJ14_5}
   \end{align}
   where $h=h_t=\hat{g}_t-g_t$, and $\Delta_L$ is the Lichnerowicz Laplacian with respect to the metric $g_t$. 
   Similarly to the previous case, a solution $h$ of (\ref{eqn:SJ14_5}) is a fixed point of  $h=\Phi(h)$ where
   \begin{align}
       \Phi(h)(x, t) 
       :=
       \int_0^t \int_{M} \big\{ K(x,t;y,s)  \cR [h](y,s) + \nabla_y^{g_s} K(x,t;y,s) \cS [h](y,s) \big\} \, \dd g_s(y) \, \dd s. 
   \label{eqn:SJ14_6}    
   \end{align}
   We also show that $\Phi$ is a contraction mapping in a small ball in a Banach space $X_T$. 
   Now the choice of $\|\cdot\|_{X_T}$ needs one more step of renormalization, as the metrics and volume forms of $\{g_t\}_{0 \leq t \leq T}$ are evolving along the time, although in a uniform way.  
   Also, we need to construct Banach space $Y_T$ and develop heat kernel estimates to obtain the contraction property of $\Phi$.  An important difference here is that the current tensor heat kernel is for evolving metrics, whose gradient estimate need particular attention nearby the initial time. In other words, in the equation (\ref{eqn:SJ14_6}) above, 
   the estimate of $\nabla_y^{g_s} K(x,t;y,s)$ is important (cf. Proposition~\ref{prop:SI06_flow}).  Another key point is to obtain a uniform Calder\'on-Zygmund inequality (cf. the discussion nearby (\ref{eqn:SI01_18})): 
   \begin{align}
			\left\| \int_0^t\int_{M} \nabla_x^{g_t} \nabla_y^{g_s} K (x, t;y,s)\cS(y,s)\dd g_s(y) \dd s \right\|_{L^{n+4}(\Omega(x_0, r^2))}
            \leq C(n,\Lambda T)  \| \cS \|_{L^{n+4}(\Omega(x_0, r^2))},
    \label{eqn:SJ14_7}
	\end{align}
    for all $x_0 \in M$ and $r \in (0, \sqrt{T}]$. 
    Here,  $\Omega(x_0, r)=B_{g_0}(x_0, r) \times (\frac{r^2}{2}, r^2)$. 
    After a routine argument, we can obtain that $\Phi$ is a contraction mapping. Therefore, initiated from $\hat{g}_0$ which is close to $g_0$ in $C^0$-sense, there exist a Ricci-Deturck solution (with $\{g_t\}_{0 \leq t \leq T}$ as background) satisfying
    \begin{align*}
        \|h\|_{X_T} \leq C \|h_0\|_{L^{\infty}}, 
    \end{align*}
    which implies (\ref{eqn:SJ08_1}) by the definition of $\|\cdot\|_{X_T}$-norm. 

    It is clear that (\ref{eqn:SJ08_1}) yields the uniqueness of the Ricci-Deturck flow which uses 
    $\{g_t\}_{0 \leq t \leq T}$ as background metric and $g_0$ as initial metric. 
    If $\{\tilde{g}_t\}_{0 \leq t \leq T}$ is another Ricci flow solution initiated from $g_0$, we know
    $\varphi_t^*(\tilde{g}_t)$ satisfies the Ricci-Deturck flow solution, backgrounded by $\{g_t\}_{0 \leq t \leq T}$ and initiated from $g_0$. Thus $\varphi_t^*(\tilde{g}_t)\equiv g_t$
    for each $t \in [0, T]$, which in turn implies that $\varphi_t \equiv Id$. 
    Thus, $\tilde{g}_t \equiv g_t$. We arrive the uniqueness of the Ricci flow initiated from $g_0$.
    \end{proof}

    The paper is organized as follows. 
    In Section~\ref{sec:compute_RD_operator}, we study the linearization and difference of the Ricci-DeTurck operators.  In Section~\ref{sec:heatkernel}, we list the necessary tensor heat kernel estimates.  In Section~\ref{sec:exist}, we define the norms $X_T$ and $Y_T$ with respect to fixed background metrics.  Then we apply the contraction mapping principle in the Banach space $X_T$ to establish the existence and uniqueness of the Ricci-Deturck flow solutions.
    In Section~\ref{sec:cd}, we define the norms $X_T$ and $Y_T$ with respect to evolving background metrics. Then we repeat the process to study the perturbation of the Ricci flow and show the continuous dependence of the Ricci-Deturck flow with respect to the initial metric in the $C^0$-norm.  Finally, in Section~\ref{sec:final}, we finish the proof of the main theorem. 

\vspace{10pt}
\noindent
\textbf{Acknowledgments}
    The authors are supported by Project of Stable Support for Youth Team in
	Basic Research Field,  Chinese Academy of Sciences (YSBR-001), the National Natural Science Foundation of China (NSFC-12431003),  and a research fund from University of Science and Technology of China.

\section{The Ricci-DeTurck operator}
\label{sec:compute_RD_operator}
    
Let $g$ be a complete Riemannian metric on $M$ and ${\hat{g}}$ be a small perturbation of $g$.  Let $\bar{g}$ be a proper reference metric on $M$.
Then the Ricci-DeTurck operator is defined as
    \begin{align}
        P_{\bar{g}}(g) := -2 \Ric(g) - \cL_{X_{\bar{g}}(g)}g.
    \label{eqn:SA09_3}
    \end{align}
We shall study the perturbation of $P$. That is, we shall calculate the
quantity $P_{\bar{g}}(\hat{g})-P_{\bar{g}}(g)$. 

For simplicity of notation, we define
 \begin{align}
  h:=\hat{g}-g, \quad  h_{ij}:=\hat{g}_{ij} -g_{ij},  \quad
  u^{ij}:=\hat{g}^{ij}-g^{ij}.    \label{eqn:SB24_9}
 \end{align}
 We use $_{,i}$ to denote $\nabla_{i}^{g}$. In other words, all covariant derivatives are calculated with respect to metric $g$. 
 Then it is clear that 
 \begin{align}
    u^{pl}=-g^{pk}g^{ls}h_{ks} -u^{pk}g^{ls}h_{ks}, \quad
    u_{,i}^{pl}=-h_{ks,i} \hat{g}^{ls}\hat{g}^{pk}. 
 \label{eqn:SB21_3}   
 \end{align}

 Define
 \begin{align}
     \boldsymbol{\gamma}_{ijl} :=\frac12(h_{il,j}+h_{jl,i}-h_{ij,l}), \quad
     \mathcal{B}_l :=g^{ij} \boldsymbol{\gamma}_{ijl}, \quad
     \vartheta^l :=g^{lk}\mathcal{B}_k=g^{lk}g^{ij} \boldsymbol{\gamma}_{ijk}.
 \label{eqn:SB24_3}    
 \end{align}
 Then we have
 \begin{align}
     \hat{\gamma}_{ij}^k& :=\hat{\Gamma}_{ij}^k-\Gamma_{ij}^k
     =\frac12 \hat{g}^{kl} (h_{li,j}+h_{lj,i}-h_{ij,l})
     =\hat{g}^{kl} \boldsymbol{\gamma}_{ijl}
     =g^{kl}\boldsymbol{\gamma}_{ijl} + u^{kl}\boldsymbol{\gamma}_{ijl}. 
     \label{eqn:SB16_3}  
 \end{align}
 The direct calculation shows that
 \begin{align}
     -2(\hat{R}_{ij}-R_{ij}) 
     =\Delta_L h_{ij} - \mathcal{L}_{\vartheta}g
      +   \cQ _{Rc}(h),    \label{eqn:DJ01_3}
 \end{align}
 where we used the definition
 \begin{align}
      \cQ _{Rc}(h)&:=-2u^{lq}(\boldsymbol{\gamma}_{ijq,l}-\boldsymbol{\gamma}_{ilq,j})
      \underbrace{
      -2(u_{,l}^{lq}\boldsymbol{\gamma}_{ijq} -u_{,j}^{lq}\boldsymbol{\gamma}_{ilq}) 
      -2\hat{\gamma}_{lp}^l\hat{\gamma}_{ij}^p + 2\hat{\gamma}_{jp}^l \hat{\gamma}_{il}^p}_{I}.   \label{eqn:DJ01_1}
 \end{align}

 Define
 \begin{align*}
     X^k :=g^{ij}\bar{\gamma}_{ij}^k :=g^{ij} (\bar{\Gamma}_{ij}^k-\Gamma_{ij}^k), \quad  
     \hat{X}^k :=\hat{g}^{ij}(\bar{\Gamma}_{ij}^k-\hat{\Gamma}_{ij}^k)
     =\hat{g}^{ij}(\bar{\gamma}_{ij}^k-\hat{\gamma}_{ij}^k).   
 \end{align*}
 Note that $\bar{\gamma}_{ij}^k=\bar{\Gamma}_{ij}^k-\Gamma_{ij}^k$ is a tensor. 
 Let $Y=\hat{X}-X$. Then the direct calculation shows that
 \begin{align}
   Y^k&=\hat{X}^k-X^k=-\vartheta^k 
   -(h \circ \bar{\gamma})^k  
   \underbrace{-u^{kl} \mathcal{B}_l-u^{ij}(\hat{\gamma}_{ij}^k+g^{lq}h_{jq}\bar{\gamma}_{il}^k)}_{-G^k}, 
 \label{eqn:SB16_1}
 \end{align}
 where 
 \begin{align*}
     h \circ \bar{\gamma}
     =(h \circ \bar{\gamma})^k \frac{\partial}{\partial x^k}
     =g^{ip}g^{jq}h_{pq}\bar{\gamma}_{ij}^k \frac{\partial}{\partial x^k}. 
 \end{align*}
 Thus
 \begin{align}
   &\mathcal{L}_{\hat{X}}\hat{g} -\mathcal{L}_{X}g=\mathcal{L}_{X}h-\mathcal{L}_{\vartheta} g -\mathcal{L}_{h \circ \bar{\gamma}} g- \cQ _{\mathcal{L}}(h),
 \label{eqn:SB19_1}
 \end{align}
 where 
 \begin{align}
    \cQ _{\mathcal{L}}(h) :=\mathcal{L}_{G}g +\mathcal{L}_{\vartheta}h +\mathcal{L}_{G}h +\mathcal{L}_{h \circ \bar{\gamma}}h=\mathcal{L}_{G} \hat{g} +\mathcal{L}_{\vartheta}h+\mathcal{L}_{h \circ \bar{\gamma}} h.  \label{eqn:DJ01_7}
 \end{align}
We use the notation $P$ and $\hat P$ to represent $P_{\bar g} (g)$ and $ P_{\bar g}(\hat g) $, respectively. Combining (\ref{eqn:DJ01_3}) and (\ref{eqn:SB19_1}) together, we have
\begin{align}
    \hat{P}-P=\underbrace{\Delta_L h-\mathcal{L}_{X}h+\mathcal{L}_{h \circ \bar{\gamma}}g}_{L(h)} +\underbrace{ \cQ _{Rc}(h)+ \cQ _{\mathcal{L}}(h)}_{ \cQ (h)}
    =L(h)+ \cQ (h). 
\label{eqn:SB24_4}    
\end{align}

Our next step is to estimate $ \cQ (h)$. 
It follows from (\ref{eqn:DJ01_7}) that
\begin{align}
 ( \cQ _{\mathcal{L}})_{ij}
 =G_{,i}^k\hat{g}_{kj} +G_{,j}^k\hat{g}_{ki}
  +\left\{\vartheta_{,i}^k + (h \circ \bar{\gamma})_{,i}^k \right\} h_{kj} 
  +\left\{ \vartheta_{,j}^k+ (h \circ \bar{\gamma})_{,j}^k \right\} h_{ki}
  -Y^k h_{ij,k}. 
\label{eqn:SB18_4}
\end{align}
By (\ref{eqn:SB16_1}), we have
\begin{align*}
    G_{,i}^k=(u^{kl}\mathcal{B}_{l,i}+u^{pq}\hat{\gamma}_{pq,i}^k)+  
    \left\{u_{,i}^{kl}\mathcal{B}_l +u_{,i}^{pq}(\hat{\gamma}_{pq}^k+g^{lm}h_{qm}\bar{\gamma}_{pl}^k) +u^{pq}g^{lm}(h_{qm}\bar{\gamma}_{pl}^k)_{,i} \right\},
\end{align*}
where the first two terms in the last line consist of second derivatives of the tensor $h$. By straightforward calculation, they can be simplified as
\begin{align*}
  u^{kl}\mathcal{B}_{l,i}+u^{pq}\hat{\gamma}_{pq,i}^k
    =(\hat{g}^{kl}\hat{g}^{pq}-g^{kl}g^{pq})\boldsymbol{\gamma}_{pql,i} 
    +u^{pq}u_{,i}^{kl} \boldsymbol{\gamma}_{pql}. 
\end{align*}
According to the order of derivatives of $h$, we obtain the decomposition 
\begin{align*}
 G_{,i}^k
 =(\hat{g}^{kl}\hat{g}^{pq}-g^{kl}g^{pq})\boldsymbol{\gamma}_{pql,i}
 +\{\underbrace{u_{,i}^{kl}\mathcal{B}_l+u_{,i}^{pq}(\hat{\gamma}_{pq}^k+g^{lm}h_{qm}\bar{\gamma}_{pl}^k) +u^{pq}g^{lm}(h_{qm}\bar{\gamma}_{pl}^k)_{,i}
 +u^{pq}u_{,i}^{kl} \boldsymbol{\gamma}_{pql}}_{A_i^k}
 \}.
\end{align*}
Plug this into (\ref{eqn:SB18_4}). 
After a lengthy, but routine calculation, we obtain 
\begin{align*}
( \cQ _{\mathcal{L}})_{ij}&=u^{pq}\boldsymbol{\gamma}_{pqj,i}+u^{pq}\boldsymbol{\gamma}_{pqi,j}\\
&\quad+\underbrace{(A_i^k\hat{g}_{kj}+A_j^k\hat{g}_{ki}-Y^k h_{ij,k}) +g^{pq}g^{ms}\left( (h_{sq}\bar{\gamma}_{mp}^k)_{,i}h_{kj} +(h_{sq}\bar{\gamma}_{mp}^k)_{,j}h_{ki} \right)}_{II}.    
\end{align*}
Combining this with (\ref{eqn:DJ01_1}) yields that
\begin{align*}
  \cQ _{ij} -I-II
 =( \cQ _{\mathcal{L}})_{ij}-I+( \cQ _{Rc})_{ij}-II
 =u^{pq}(\boldsymbol{\gamma}_{pqj,i}+\boldsymbol{\gamma}_{pqi,j}+2\boldsymbol{\gamma}_{ipq,j}-2\boldsymbol{\gamma}_{ijq,p}). 
\end{align*}
Using the symmetry of $u^{pq}$, the direct calculation shows that 
\begin{align*}
    &\quad u^{pq} \left(\boldsymbol{\gamma}_{pqi,j}+\boldsymbol{\gamma}_{pqj,i}+2\boldsymbol{\gamma}_{ipq,j}-2\boldsymbol{\gamma}_{ijq,p}\right) \\
    &=u^{pq}h_{ij,pq} + \underbrace{u^{pq} \left(R_{pjq}^sh_{si}+R_{pji}^sh_{sq}+ R_{piq}^sh_{sj}+R_{pij}^sh_{sq} +\frac12(R_{ijp}^sh_{sq}+R_{ijq}^s h_{sp}) \right)}_{III}. 
\end{align*}
We define a $(1,2)$-tensor $\cS$ by
\begin{align}
    \cS_{ij}^k :=-u^{kl}h_{ij,l}.      \label{eqn:SB24_2}
\end{align}
Then we have the decomposition of $ \cQ $ as
\begin{align*}
     \cQ _{ij}=(\nabla^* \cS)_{ij} +I+II+III-u_{,q}^{qp}h_{ij,p},  
\end{align*}
where 
\begin{align}
    I&=-2(u_{,l}^{lq}\boldsymbol{\gamma}_{ijq} 
      -u_{,j}^{lq}\boldsymbol{\gamma}_{ilq}) 
      -2\hat{\gamma}_{lp}^l\hat{\gamma}_{ij}^p + 2\hat{\gamma}_{jp}^l \hat{\gamma}_{il}^p, \label{eqn:SB24_5}\\
    II&=\left( u_{,i}^{kl}\mathcal{B}_l+u_{,i}^{pq}(\hat{\gamma}_{pq}^k+g^{lm}h_{qm}\bar{\gamma}_{pl}^k) +u^{pq}g^{lm}(h_{qm}\bar{\gamma}_{pl}^k)_{,i}
 +u^{pq}u_{,i}^{kl} \boldsymbol{\gamma}_{pql} \right) \hat{g}_{kj} \notag\\
    &\quad+\left( u_{,j}^{kl}\mathcal{B}_l+u_{,j}^{pq}(\hat{\gamma}_{pq}^k+g^{lm}h_{qm}\bar{\gamma}_{pl}^k) +u^{pq}g^{lm}(h_{qm}\bar{\gamma}_{pl}^k)_{,j}
 +u^{pq}u_{,j}^{kl} \boldsymbol{\gamma}_{pql} \right) \hat{g}_{ki} \notag\\
    &\quad + \left(\vartheta^k 
   +(h \circ \bar{\gamma})^k+u^{kl} \mathcal{B}_l  
   +u^{pq}(\hat{\gamma}_{pq}^k+g^{lm}h_{qm}\bar{\gamma}_{pl}^k) \right) h_{ij,k}\notag\\
   &\quad+g^{pq}g^{ms}\left( (h_{sq}\bar{\gamma}_{mp}^k)_{,i}h_{kj} +(h_{sq}\bar{\gamma}_{mp}^k)_{,j}h_{ki} \right), \label{eqn:SB24_6}\\
   III&=u^{pq} \left(R_{pjq}^sh_{si}+R_{pji}^sh_{sq}+ R_{piq}^sh_{sj}+R_{pij}^sh_{sq} +\frac12(R_{ijp}^sh_{sq}+R_{ijq}^s h_{sp}) \right).     \label{eqn:SB24_7}
\end{align}

In summary, we have
\begin{prop}\label{prop:RD_operator_compute}
    Suppose $\bar{g}, g$ and $\hat{g}$ are three Riemannian metrics on $M$.
    Let $h=\hat{g}-g$. Then we have
    \begin{align*}
        P_{\bar{g}}(\hat{g}) -P_{\bar{g}}(g)=Lh+ \cQ [h]. 
    \end{align*}
    Here $L$ is a linear operator defined as
    \begin{align}
        L h=\Delta_L h-\mathcal{L}_{X}h+\mathcal{L}_{h \circ \bar{\gamma}}g= \Delta h +  E \ast h + F \ast \nabla h,    \label{eqn:SB24_8} 
    \end{align}
    where $E \ast h$ and $F \ast h$ have the following expression in normal coordinates: 
    \begin{align}
        &(E \ast h)_{ij} := (R_{ipqj} + R_{jpqi} + \bar \gamma_{pqj,i} + \bar \gamma_{pqi,j})h_{pq}  -  h_{ip}(R_{pj} + \bar \gamma_{mmp,j}) - h_{jp}(R_{pi} + \bar \gamma_{mmp,i}), \label{eqn:SC16_3} \\
        &(F \ast \nabla h)_{ij} := h_{pq,i} \bar \gamma_{pqj} + h_{pq,j} \bar \gamma_{pqi} - \bar \gamma_{mmk}h_{ij,k}. 
        \label{eqn:SC16_4}
    \end{align}
    The term $ \cQ [h]$ is a quadratic term of $h$ satisfying
    \begin{align}
    \cQ [h]={}& \nabla ^\ast \cS [h] + \cR [h],    \label{eqn:SC16_1}
    \end{align}
    where
    \begin{align}
        \cS [h]_{ij}^k :=\cS_{ij}^k ,\quad  
        \cR [h]_{ij} :=-u_{,q}^{qp}h_{ij,p}+I+II+III,
        \label{eqn:SC16_2}
    \end{align}
    where $S$ is defined in (\ref{eqn:SB24_2}), $u^{pq}$ is defined in (\ref{eqn:SB24_9}), the term $I$, $II$ and $III$ are defined in (\ref{eqn:SB24_5}), (\ref{eqn:SB24_6}) and (\ref{eqn:SB24_7}) respectively.
\end{prop}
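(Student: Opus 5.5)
The proposition is the bookkeeping summary of the computation carried out just before its statement, so I will assemble the identities already derived rather than introduce new ideas. First, I will combine (\ref{eqn:DJ01_3}) and (\ref{eqn:SB19_1}). With the sign convention $P_{\bar g}(g)=-2\Ric(g)-\cL_{X}g$ of (\ref{eqn:SA09_3}), the two terms $-\cL_\vartheta g$ cancel, since one comes with a plus and one with a minus. This leaves the decomposition (\ref{eqn:SB24_4}),
\begin{align*}
\hat P-P=\Delta_L h-\cL_X h+\cL_{h\circ\bar\gamma}g+\cQ_{Rc}(h)+\cQ_{\cL}(h).
\end{align*}
The linear part $L(h)$ and the quadratic part $\cQ(h)$ are then handled separately.

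For the linear part, I will work in normal coordinates of $g$ at a point. There $\Delta_L h_{ij}=\Delta h_{ij}+2R_{ipqj}h_{pq}-R_{ip}h_{pj}-R_{jp}h_{pi}$, up to the symmetrization convention matching (\ref{eqn:SC16_3}). I will expand $\cL_X h_{ij}=X^k h_{ij,k}+X^k_{,i}h_{kj}+X^k_{,j}h_{ik}$ using $X^k=\bar\gamma_{mmk}$. This produces $-\bar\gamma_{mmk}h_{ij,k}$ in $F\ast\nabla h$ and the terms $-h_{ip}\bar\gamma_{mmp,j}-h_{jp}\bar\gamma_{mmp,i}$ in $E\ast h$. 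Next, $\cL_{h\circ\bar\gamma}g_{ij}=(h_{pq}\bar\gamma_{pqj})_{,i}+(h_{pq}\bar\gamma_{pqi})_{,j}$, which supplies the $\bar\gamma_{pqj,i}h_{pq}$ terms and the $h_{pq,i}\bar\gamma_{pqj}$ terms. Matching these against (\ref{eqn:SC16_3}) and (\ref{eqn:SC16_4}) gives (\ref{eqn:SB24_8}).

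For the quadratic part, I will follow the chain already established. The second-order terms in $\cQ_{\cL}$ come from $G^k_{,i}$. Adding them to the second-order terms of $\cQ_{Rc}$ produces $u^{pq}(\boldsymbol\gamma_{pqj,i}+\boldsymbol\gamma_{pqi,j}+2\boldsymbol\gamma_{ipq,j}-2\boldsymbol\gamma_{ijq,p})$. Expanding each $\boldsymbol\gamma$ and commuting covariant derivatives via the Ricci identity reduces this to $u^{pq}h_{ij,pq}+III$. Because $u$ is symmetric, the third-order-looking terms $h_{pj,qi}$ and similar ones cancel in pairs once the commutators are extracted, and those commutators are exactly the curvature terms collected in $III$. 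Finally, $u^{pq}h_{ij,pq}=(\nabla^*\cS)_{ij}-u^{qp}_{,q}h_{ij,p}$ by the product rule, with $\cS^k_{ij}=-u^{kl}h_{ij,l}$ and the convention $(\nabla^*\cS)_{ij}=-\nabla_k\cS^k_{ij}$. The leftover terms are $I$ from $\cQ_{Rc}$, and $II$ from the lower-order part $A^k_i$ of $G^k_{,i}$ together with the $h_{kj}$ and $Y^k h_{ij,k}$ terms of (\ref{eqn:SB18_4}). This yields (\ref{eqn:SC16_1}) and (\ref{eqn:SC16_2}).

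The main obstacle is the lengthy step deriving the expression for $(\cQ_{\cL})_{ij}$ from (\ref{eqn:SB18_4}). There one must verify the algebraic identity $u^{kl}\mathcal B_{l,i}+u^{pq}\hat\gamma^k_{pq,i}=(\hat g^{kl}\hat g^{pq}-g^{kl}g^{pq})\boldsymbol\gamma_{pql,i}+u^{pq}u^{kl}_{,i}\boldsymbol\gamma_{pql}$. One must also check that, after lowering with $\hat g_{kj}$, the second-order part collapses to $u^{pq}(\boldsymbol\gamma_{pqj,i}+\boldsymbol\gamma_{pqi,j})$. I would do this by writing $\hat g^{kl}\hat g_{kj}=\delta^l_j$, $g^{kl}\hat g_{kj}=\delta^l_j+g^{kl}h_{kj}$, and $\hat g^{pq}=g^{pq}+u^{pq}$. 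The cross terms then either cancel against the $\vartheta^k_{,i}h_{kj}$ term in (\ref{eqn:SB18_4}) or are absorbed into $II$. Careful index tracking is the only real difficulty; no estimates are needed.
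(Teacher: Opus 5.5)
Your proposal is correct and follows the same route as the paper. You combine (\ref{eqn:DJ01_3}) with (\ref{eqn:SB19_1}) under the sign convention (\ref{eqn:SA09_3}), so the $\cL_\vartheta g$ terms cancel, and expand the linear part in normal coordinates. You then obtain the second-order part $u^{pq}(\boldsymbol\gamma_{pqj,i}+\boldsymbol\gamma_{pqi,j}+2\boldsymbol\gamma_{ipq,j}-2\boldsymbol\gamma_{ijq,p})$ via the cancellation against $\vartheta^k_{,i}h_{kj}$, reduce it to $u^{pq}h_{ij,pq}+III$ using the symmetry of $u$ and the Ricci identity, and finish with $u^{pq}h_{ij,pq}=(\nabla^*\cS)_{ij}-u^{qp}_{,q}h_{ij,p}$. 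One wording slip: the terms such as $h_{pj,qi}$ that cancel in pairs are second-order, not third-order; they are mixed second derivatives taken in different orders, and their commutators produce the curvature terms in $III$.
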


In a particular case, where $\bar{g}=g$, we have $\bar{\gamma}_{ij}^k=\bar{\Gamma}_{ij}^k-\Gamma_{ij}^k=0$. Then 
\begin{align}
    Lh=\Delta h + 2R_{ipqj} h_{pq} -R_{ip}h_{pj} -R_{jp}h_{pi}=\Delta_L h,
\label{eqn:SL07_9}
\end{align}
where $\Delta_L$ is the Lichnerowicz Laplacian.

\section{Heat kernel estimates} 
\label{sec:heatkernel}

In this paper, we let $g_t \equiv g$, or let $g_t$ evolve by the Ricci flow equation. 
    The evolving manifold is denoted by $\{(M, g_t), 0 \leq t \leq 1\}$. We always assume the  following conditions to hold. 
    \begin{itemize}
        \item  If $g_t \equiv g$, we assume
        \begin{align}
            \sum_{k=0}^3 |\nabla^k Rm| \leq 1. 
        \label{eqn:SI18_1}    
        \end{align}
        \item  If $g_t$ evolves by Ricci flow, we assume
        \begin{align}
             |Rm| \leq 1. 
        \label{eqn:SI18_2}    
        \end{align}          
    \end{itemize}
    Note that in (\ref{eqn:SI18_1}) and (\ref{eqn:SI18_2}), the constant $1$ is chosen for simplicity. If the left hand side quantities are bounded by some constant $\Lambda_0$,  we can always do rescaling and let the right hand side be $1$.  
    For the Ricci flow solution, (\ref{eqn:SI18_2}) actually implies (\ref{eqn:SI18_1}) on the time interval  $[\frac12, 1]$ by Shi-type curvature estimates and rescaling. 
    
    We also use $g(t)$ to denote $g_t$ to avoid misunderstanding of taking $t$-derivatives. 
    It is clear that the constant $1$ can be replaced by other uniform constants $T_0$ and $\Lambda_0$. Then our discussion also works for evolving metrics on $M \times [0, T_0]$ with uniform curvature bounds $\Lambda_0$. However, for simplicity of notation, we prefer to fix both $T_0$ and $\Lambda_0$ to be $1$.

	\begin{defn}
    \label{dfn:SL06_3}
		For each $x,y\in M$ and $0 \leq s<t \leq 1$, we denote $K(x,t;y,s)$ as a map
        \begin{align*}
            K(x,t;y,s): Sym^2(T_y^* M ) \to Sym^2(T_x^* M).
        \end{align*}
        We say that $K(x,t;y,s)$ is a heat kernel of the heat operator $(\partial_t-\Delta_L)$ if it satisfies
			\begin{align}
				&(\partial_t - \Delta_L) K(x,t;y,s) = 0, \label{eqn:SI06_15}\\
				&\lim_{t\to s^+ } K(x,t; y,s) =  \delta_y Id_{Sym^2(T^*M)}.
                 \label{eqn:SI06_16}
			\end{align}
	\end{defn}

    The identity (\ref{eqn:SI06_16}) can be understood as follows. 
    Fix $y$. For each smooth map 
    \begin{align*}
        F(x,t): Sym^2(T_x^*M) \to Sym^2(T_y^*M), 
    \end{align*}
    we have
    \begin{align*}
        \lim_{t \to s^{+}} \int_M F(x,t)K(x,t;y,s)d\mu_x(s)=F(y,s). 
    \end{align*}
    The standard duality implies that 
    \begin{align}
       &(-\partial_s -\Delta_L + H) K(x,t;y,s)=0,      \label{eqn:SI06_4}\\
       &\lim_{s \to t^{-} } K(x,t; y,s) =  \delta_x \text{Id}_{Sym^2(T^*M)}, 
    \end{align}
    where $H=-\frac12 tr_{g(s)}(\partial_s g)$.  In the static metric case, $H=0$. 
    In the Ricci flow setting, $H=-R$ where $R$ is the scalar curvature.

    Now we discuss the heat kernel gradient in the Ricci flow setting. 
    The static metric case is simpler, and we leave it to the reader. 
    Suppose $u$ is a smooth $(0,2)$-tensor field satisfying 
    \begin{align*}
        (\partial_t-\Delta_L)u=0 
    \end{align*}
    along the Ricci flow. Then the direct calculations show that
    \begin{align*}
       &\quad \{ (\partial_t-\Delta) \nabla u \}_{ij,k}\\
       &=2\left\{R_{jpqk}u_{ip,q} +R_{ipqk}u_{pj,q} +R_{ipqj}u_{pq,k}\right\}
     -\left\{R_{ip}u_{pj,k}+R_{jp}u_{ip,k}+R_{kq}u_{ij,q} \right\}
     +2R_{ipqj,k}u_{pq}. 
    \end{align*}
    In short, we have
    \begin{align}
        (\partial_t - \Delta ) \nabla u=Rm*(\nabla u) + (\nabla Rm) * u.   \label{eqn:SI07_1}
    \end{align}
    Similarly, if $w$ is a smooth $(0,2)$-tensor field satisfying $(-\partial_s-\Delta_L +R)w=0$, then we have
    \begin{align}
         (-\partial_s - \Delta) \nabla w=Rm*(\nabla w) + (\nabla Rm) * w.  \label{eqn:SI07_2}
    \end{align}
    We can continue to take space derivatives. Note that for heat kernel $K(x,t;y,s)$, the $x$-derivative $\nabla_x^{g(t)}$ and $y$-derivative $\nabla_y^{g(s)}$ are different. 

    \begin{defn}
    \label{dfn:SI07_1}
    Note that 
    \begin{align}
        \nabla_x^{g(t)} \nabla_y^{g(s)}K(x,t;y,s): 
        Sym^2(T_y^* M ) \otimes T_y^*M \to Sym^2(T_x^* M) \otimes T_x^*M.   \label{eqn:SI06_5}
    \end{align}
    For simplicity of notation, we define
    \begin{align}
        \mathcal{K}(x,t;y,s) := \nabla_x^{g(t)} \nabla_y^{g(s)}K(x,t;y,s). \label{eqn:SI06_6}
    \end{align}
	\end{defn}

    \begin{prop}
   \label{prop:SI06_0}
    Let $(M, g)$ be a complete Riemannian manifold satisfying (\ref{eqn:SI18_1}). Then the following heat kernel estimates hold for all $x \in M$ and $0<s<t\leq 1$.
    \begin{align}
				\big\{|K| +\sqrt{t-s}\left(|\nabla_x K| 
                +|\nabla_y K|\right) \big\}(x,t;y,s)
                <\frac {C}{|B(x,\sqrt{t-s})|} \cdot \exp \left ( -\frac {d^2(x,y)}{4D(t-s)} \right ).
                \label{eqn:SI06_7}
	\end{align}
    Let $\mathcal{K}=\mathcal{K}(x,t;y,s)=\nabla_x \nabla_y K(x,t;y,s)$. It satisfies 
    \begin{align}
				\big\{|\mathcal{K}| +\sqrt{t-s}|\nabla_x \mathcal{K}| 
                +(t-s)|\partial_t \mathcal{K}| \big\}
                <\frac{1}{(t-s)} \cdot \frac {C}{|B(x,\sqrt{t-s})|} \cdot \exp \left ( -\frac {d^2(x,y)}{4D(t-s)} \right ).
                \label{eqn:SI06_8}
	\end{align}
    Here, both $C$ and $D$ are dimensional constants. 
   \end{prop}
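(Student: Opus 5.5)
The plan is to reduce everything to the scalar heat kernel through a Kato-type domination, and then obtain the derivative bounds by parabolic rescaling and local regularity. There is no injectivity radius lower bound, so I will not use normal coordinates on $M$ at scale $\sqrt{t-s}$. Instead I will pass to the universal cover of a small geodesic ball, or equivalently pull back by $\exp_x$ to the tangent ball $B(0,\pi/2)\subset T_xM$. On that ball the pulled-back metric has bounded geometry: curvature bounded by (\ref{eqn:SI18_1}), injectivity radius bounded below, and harmonic-coordinate $C^{3,\alpha}$ control coming from the bounds on $\nabla^k Rm$, $k\le 3$. Local parabolic estimates done upstairs descend to $M$, because pulling back a solution of the (linear) tensor heat equation gives a solution upstairs. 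The key point is that the volume factor $|B(x,\sqrt{t-s})|$ must be kept on $M$ and never read off upstairs, where volumes are Euclidean. It appears only through the scalar Gaussian bound.

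\textbf{Step 1 ($C^0$ bound).} Since $|\Delta_L u-\Delta u|\le C|Rm||u|$, Kato's inequality gives, for $u(x,t)=K(x,t;y,s)\xi$,
\[
(\partial_t-\Delta)|u|\le C|u| .
\]
By the maximum principle, justified on complete manifolds with bounded curvature by a Karp--Li type argument together with a priori Gaussian integrability of $K$, we get $|K|(x,t;y,s)\le e^{C(t-s)}H(x,t;y,s)$, where $H$ is the scalar heat kernel. Ricci is bounded below by $-(n-1)$ and $t-s\le 1$, so the Li--Yau upper bound applies:
\[
H\le \frac{C}{|B(x,\sqrt{t-s})|}\exp\Big(-\frac{d^2(x,y)}{5(t-s)}\Big).
\]
This gives the $|K|$ part of (\ref{eqn:SI06_7}). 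Volume doubling, again from the lower Ricci bound, lets us move the center between $x$ and $y$ at the cost of enlarging $D$.

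\textbf{Step 2 (first derivatives).} Fix $(x,t)$ and set $r=\sqrt{t-s}/2$. On the parabolic cylinder $B(x,r)\times(t-r^2,t]$, the function $u$ solves a linear parabolic system whose coefficients are controlled by (\ref{eqn:SI18_1}). Lifting to the tangent ball and applying interior Schauder or Bernstein estimates after rescaling the cylinder to unit size gives
\[
|\nabla_x K|\le Cr^{-1}\sup_{\text{cylinder}}|K| .
\]
Combined with Step 1, and with $d(x',y)\ge d(x,y)-r$ absorbed into the Gaussian by enlarging $D$, this yields the $\nabla_x K$ bound. For $\nabla_y K$ I will use the adjoint equation (\ref{eqn:SI06_4}) in $(y,s)$, where $H=0$ in the static case, together with the symmetry $K(x,t;y,s)^*=K(y,t;x,s)$. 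Then I run the same argument around $(y,s)$, using doubling to convert $|B(y,\sqrt{t-s})|$ into $|B(x,\sqrt{t-s})|$.

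\textbf{Step 3 (mixed derivative $\mathcal K$).} The function $v=\nabla_y K(\cdot,\cdot;y,s)$ is, in $(x,t)$, still a solution of $(\partial_t-\Delta_L)v=0$, since differentiation in $y$ commutes with the $x$-equation. Moreover, by Step 2 it satisfies
\[
|v|\le \frac{C}{\sqrt{t-s}}\,|B|^{-1}e^{-d^2/4D(t-s)} .
\]
Applying the interior estimates once more on the cylinder of size $r\sim\sqrt{t-s}$, now to first, second and time derivatives, gives the three terms of (\ref{eqn:SI06_8}). Each $x$-derivative costs $r^{-1}$ and each $t$-derivative costs $r^{-2}$; the $\nabla_x\mathcal K$ term involves three derivatives of $K$ and needs the extra $C^{3}$ control of the coefficients from (\ref{eqn:SI18_1}). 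The formulas (\ref{eqn:SI07_1})--(\ref{eqn:SI07_2}) serve as an alternative route: they give equations for $\nabla u$ with bounded $Rm$ and $\nabla Rm$ coefficients, so Bernstein-type estimates can be applied iteratively.

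\textbf{Main obstacle.} The hard step is justifying the local regularity uniformly without an injectivity radius bound, and in particular ensuring that the tangent-ball lift produces constants that depend only on $n$. This holds because the pulled-back metrics have uniformly controlled harmonic radius. A second point needing care is justifying the maximum principle for the non-compactly supported kernel; I would handle it by constructing $K$ as the limit of Dirichlet kernels on an exhaustion, for which the domination $|K_k|\le e^{C(t-s)}H_k\le e^{C(t-s)}H$ is immediate.
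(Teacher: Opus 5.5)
Your proposal is correct, and its overall architecture matches the paper's. Both arguments obtain a Gaussian bound for $|K|$ and recenter the volume factor at $x$ by volume comparison, absorbing the $e^{Cd(x,y)}$ and polynomial losses by enlarging $D$ since $t-s\le 1$. Both then apply parabolic-scale local derivative estimates iteratively: first to $K$, then to $v=\nabla_yK$ viewed as a solution in $(x,t)$.

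The differences lie in how the ``standard'' ingredients are supplied. For the $C^0$ bound, the paper simply quotes the tensor Gaussian bound (\ref{eqn:SI06_9}). You derive it from Kato's inequality, domination by the scalar kernel, and Li--Yau, with the Dirichlet exhaustion removing any circularity in the maximum principle. This makes visible that only $|\Rm|\le 1$ is used at that stage.

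For the derivative bounds, the paper treats $\nabla u,\nabla\nabla u,\dots$ as solutions of heat-type systems via (\ref{eqn:SI07_1})--(\ref{eqn:SI07_2}). It then applies gradient estimates of Bernstein/maximum-principle type, made explicit in Lemma~\ref{lem:SI06_1}. These are coordinate-free and never see the injectivity radius. You instead run interior Schauder estimates on $(B(0,\pi/2),\exp_x^*g)$. That costs you the harmonic-radius control of the lifted metric, but it delivers all derivatives at once.

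In particular, you get $\partial_t\mathcal{K}=\nabla_x\partial_t v$ directly as a parabolic third-order interior estimate for $v$. The paper instead first bounds $\nabla_x\nabla_x\mathcal{K}$ and then reads $\partial_t\mathcal{K}$ off the equation. In your scheme it is this $\partial_t\mathcal{K}$ term, not $\nabla_x\mathcal{K}$, that is the highest-order one, though (\ref{eqn:SI18_1}) covers it either way.

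What the paper's route buys is robustness. The maximum-principle argument needs only time-integrable control of $\nabla\Rm$, which is exactly what Lemma~\ref{lem:SI06_1} exploits in the Ricci flow case, where $|\nabla\Rm|\lesssim s^{-1/2}$. A Schauder argument on cylinders reaching down to $s\approx 0$ would need extra care there.
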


   \begin{proof}
 For a fixed Riemannian metric with bounded curvature, standard heat kernel estimate implies 
       \begin{align}
           |K|(x,t;y,s)
          <\frac {C}{|B(x,\sqrt{t-s})|^{\frac12} \cdot|B(y,\sqrt{t-s})|^{\frac12}} 
          \cdot \exp \left ( -\frac {d^2(x,y)}{4D(t-s)} \right ). \label{eqn:SI06_9}
       \end{align}
       Volume comparison shows that 
       \begin{align*}
           \frac{1}{|B(y,\sqrt{t-s})|^{\frac12}} \leq \frac{1}{|B(x,\sqrt{t-s})|^{\frac12}} 
           \cdot e^{C\{d(x,y)+\sqrt{t-s}\}}.
       \end{align*}
       Plugging it into (\ref{eqn:SI06_9}) and adjusting the constant $D$ slightly if necessary, we obtain
       \begin{align}
         |K|(x,t;y,s)
         <\frac {C}{|B(x,\sqrt{t-s})|} \cdot \exp \left ( -\frac {d^2(x,y)}{4D(t-s)} \right ),
       \end{align}
       which provides the estimate of the first term in (\ref{eqn:SI06_7}).  The second and third terms can be estimated using the standard gradient estimate.  Thus, (\ref{eqn:SI06_7}) is proved. 

       Fixing $(y,s) \in M \times [0,1]$ and $V \in T_y^*M$, we can regard $\mathcal{K}(x,t;\cdot, \cdot)$ as $\nabla u$
       for some $(0,2)$-tensor field 
       $u=u_{ij}dx^idx^j$ satisfying the heat equation $(\partial_t-\Delta_L) u=0$. 
       Then $\nabla u$ satisfies (\ref{eqn:SI07_1}). 
       Note that now the metric is fixed with  $|Rm|+\sqrt{t}|\nabla Rm|$ uniformly bounded. 
       The standard heat solution estimate implies 
        \begin{align}
				|\mathcal{K}|(x,t;y,s)
                <\frac{1}{(t-s)} \cdot \frac {C}{|B(x,\sqrt{t-s})|} \cdot \exp \left ( -\frac {d^2(x,y)}{4D(t-s)} \right ).
                \label{eqn:SI06_10}
	   \end{align}
       Fixing $(y,s)$, we can regard $\nabla \mathcal{K}:=\nabla_x \mathcal{K}$ as a $(0,4)$-tensor field that satisfies
       \begin{align*}
           (\partial_t -\Delta) \nabla \mathcal{K}=Rm*(\nabla \mathcal{K}) 
           +(\nabla Rm)* \mathcal{K} + (\nabla \nabla Rm)* \mathcal{K}. 
       \end{align*}
       As $|Rm|+\sqrt{t}|\nabla Rm|+t|\nabla \nabla Rm|$ is uniformly bounded, 
       we can apply a standard gradient estimate for the heat-type solution again and obtain 
       \begin{align}
				\sqrt{t-s}|\nabla_x \mathcal{K}|(x,t;y,s)
                <\frac{1}{(t-s)} \cdot \frac {C}{|B(x,\sqrt{t-s})|} \cdot \exp \left ( -\frac {d^2(x,y)}{4D(t-s)} \right ).
                \label{eqn:SI06_13}
	   \end{align}
       From this we regard 
       $\nabla_x \nabla_x \mathcal{K}$ as a $(0,5)$-tensor field satisfying some heat-type solution, with lower order term coefficient bounded by $\sum_{k=0}^3 t^{\frac{k}{2}}|\nabla^k Rm|$. 
       The same gradient estimate implies that
        \begin{align}
				(t-s)|\nabla_x \nabla_x \mathcal{K}|(x,t;y,s)
                <\frac{1}{(t-s)} \cdot \frac {C}{|B(x,\sqrt{t-s})|} \cdot \exp \left ( -\frac {d^2(x,y)}{4D(t-s)} \right ).
                \label{eqn:SI06_14}
	   \end{align}
 Taking the time derivative of (\ref{eqn:SI07_1}), we see that $\partial_t \mathcal{K}$ satisfies a heat-type equation with lower-order-term coefficients bounded by $\sum_{k=0}^3 t^{\frac{k}{2}}|\nabla^k Rm|$. 
       As $\Delta= tr(\nabla \nabla)$, we can use $|\nabla_x \nabla_x \mathcal{K}|$ and lower order terms to bound $\partial_t \mathcal{K}$.  It follows from (\ref{eqn:SI06_14}), (\ref{eqn:SI06_13}) and (\ref{eqn:SI06_10}) that 
        \begin{align}
				(t-s)|\partial_t \mathcal{K}|(x,t;y,s)
                <\frac{1}{(t-s)} \cdot \frac {C}{|B(x,\sqrt{t-s})|} \cdot \exp \left ( -\frac {d^2(x,y)}{4D(t-s)} \right ).
                \label{eqn:SI06_144}
	   \end{align}
 Combining the previous four inequalities, we arrive at (\ref{eqn:SI06_8}). 
   \end{proof}

   \begin{prop}
   \label{prop:SI06_flow}
    Let $\{(M, g_t), 0 \leq t \leq 1\}$ be a complete Ricci flow solution satisfying (\ref{eqn:SI18_2}). 
    Then the following heat kernel estimates hold for all $x \in M$ and $0<s<t\leq 1$.
    \begin{align}
				\big\{|K| +\sqrt{t-s}\left(|\nabla_x^{g_t} K| 
                +|\nabla_y^{g_s} K|\right) \big\}(x,t;y,s)
                <\frac {C}{|B(x,\sqrt{t-s})|} \cdot \exp \left ( -\frac {d^2(x,y)}{4D(t-s)} \right ).
                \label{eqn:SI06_7flow}
	\end{align}
    Let $\mathcal{K}=\mathcal{K}(x,t;y,s)=\nabla_x^{g(t)} \nabla_y^{g(s)} K(x,t;y,s)$. It satisfies 
    \begin{align}
				\big\{|\mathcal{K}| +\sqrt{t-s}|\nabla_x^{g_t} \mathcal{K}| 
                +(t-s)|\partial_t \mathcal{K}| \big\}
                <\frac{1}{(t-s)} \cdot \frac {C}{|B(x,\sqrt{t-s})|} \cdot \exp \left ( -\frac {d^2(x,y)}{4D(t-s)} \right)
                \label{eqn:SI06_8flow}
	\end{align}
    whenever $\frac14<s<t<1$.
    Here, both $C$ and $D$ are dimensional constants. 
   \end{prop}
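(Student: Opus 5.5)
Along a Ricci flow with $|Rm|\le 1$ on $[0,1]$, all metrics $g_t$ are uniformly bi-Lipschitz to each other, with constant $e^{C}$. Hence distances and volumes of balls at different times are comparable up to dimensional factors, and these factors can be absorbed into $C$ and $D$. I will therefore write the estimates with $d=d_{g_t}$ and $|B(x,\cdot)|$ at any fixed time without further comment.

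The plan mirrors the proof of Proposition~\ref{prop:SI06_0}.

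\textbf{Step 1: the $C^0$ bound on $K$.} For the scalar (conjugate) heat kernel along a Ricci flow with bounded curvature, Gaussian upper bounds of the form
\[
C|B(x,\sqrt{t-s})|^{-1/2}\,|B(y,\sqrt{t-s})|^{-1/2}\exp\bigl(-d^2/4D(t-s)\bigr)
\]
are standard; they follow from Li--Yau/Moser iteration on parabolic cylinders, or from Grigor'yan-type arguments, since the metrics are uniformly equivalent and Ricci is bounded below. For the tensor kernel of $\partial_t-\Delta_L$, Kato's inequality and the bound $|Rm|\le 1$ give $(\partial_t-\Delta)|K|\le C|K|$ (in the barrier sense). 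So $|K|$ is dominated by $e^{C(t-s)}$ times the scalar kernel. Volume comparison, as in Proposition~\ref{prop:SI06_0}, then converts this into the one-sided form \eqref{eqn:SI06_7flow} for $|K|$.

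\textbf{Step 2: first derivatives.} For $\nabla_x^{g_t}K$, fix $(y,s)$ and regard $u=K(\cdot,\cdot;y,s)$ as a solution of $(\partial_t-\Delta_L)u=0$ on the parabolic cylinder $B(x,\sqrt{t-s})\times[s+\tfrac{t-s}{2},t]$. I will then apply Shi-type local derivative estimates. Specifically, consider the quantity $(\tau)|\nabla u|^2+A|u|^2$ with $\tau=t'-s$, together with a cutoff. In \eqref{eqn:SI07_1} the term $(\nabla Rm)*u$ is controlled by Shi's estimate $|\nabla Rm|(t')\le C/\sqrt{t'}\le C/\sqrt{\tau}$, so the maximum principle applies. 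This gives $\sqrt{t-s}\,|\nabla_x K|\le C\sup|K|$ over the cylinder, and Step 1 bounds the right-hand side with the same Gaussian after adjusting $D$.

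For $\nabla_y^{g_s}K$, use the dual equation \eqref{eqn:SI06_4} backward in $s$, namely $(-\partial_s-\Delta_L-R)K=0$, together with \eqref{eqn:SI07_2}. Here the obstacle appears: the coefficient $\nabla Rm(s)$ is only bounded by $C/\sqrt{s}$, which is not controlled by $(t-s)^{-1/2}$ when $s$ is near $0$. Two remedies are available:
\begin{itemize}
\item replace the Shi bound by a modified Bernstein quantity that uses only $|Rm|$, for instance by commuting derivatives more carefully;
\item exploit duality: $\nabla_y K(x,t;y,s)$ is the adjoint of the map sending a $1$-form-valued tensor to the divergence-type initial datum, so the bound reduces to an $x$-side estimate for solutions with initial data $\nabla^*\cS$.
\end{itemize}
I would first try the latter route. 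An integration by parts moves $\nabla_y$ onto the data and produces only $Rm$ terms, and the resulting forward problem is handled by a local $L^\infty$ estimate using $|Rm|\le1$ alone. This step is the main obstacle, and the restriction $s>\tfrac14$ in the second statement indicates that the authors avoid needing higher derivatives near $s=0$.

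\textbf{Step 3: the mixed kernel $\mathcal{K}$ for $\tfrac14<s<t<1$.} On $[\tfrac14,1]$, Shi's estimates together with rescaling give \eqref{eqn:SI18_1}-type bounds $\sum_{k\le3}|\nabla^kRm|\le C$. The argument of Proposition~\ref{prop:SI06_0} then goes through verbatim:
\begin{itemize}
\item view $\nabla_y^{g_s}K$, for fixed $(y,s)$ and a fixed direction, as a solution in $(x,t)$ and apply the gradient estimate to get the bound on $|\mathcal{K}|$;
\item differentiate once more in $x$, using the equation for $\nabla\mathcal{K}$, whose lower-order coefficients involve at most $\nabla^2Rm$, to bound $\nabla_x\mathcal{K}$;
\item do the same for $\nabla_x\nabla_x\mathcal{K}$ and express $\partial_t\mathcal{K}$ through $\Delta\mathcal{K}$ plus lower-order terms. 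The commutator $[\partial_t,\nabla]$ along Ricci flow contributes only $\nabla Rc*\mathcal{K}$, which is bounded.
\end{itemize}
Each step loses one factor of $\sqrt{t-s}$ and preserves the Gaussian factor, which yields \eqref{eqn:SI06_8flow}.
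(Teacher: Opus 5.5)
Your Step 1, the $\nabla_x^{g_t}K$ half of Step 2 (where indeed $|\nabla Rm|(t')\le C/\sqrt{t'}\le C/\sqrt{t'-s}$), and Step 3 are fine and agree with the paper. The paper likewise gets everything except the $y$-gradient from Shi's estimates on $[\tfrac18,1]$.

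The gap is the bound on $\sqrt{t-s}\,|\nabla_y^{g_s}K|$ for $s$ near $0$. This is the one step that is new in this proposition, and you leave it as a plan. Moreover, the obstruction you identify is not real. You do not need $|\nabla Rm|(s)\lesssim (t-s)^{-1/2}$; it suffices that Shi's bound $|\nabla Rm|(s)\le Cs^{-1/2}$ is \emph{integrable} in $s$. The paper (Lemma~\ref{lem:SI06_1}) passes to backward time and derives $(\partial_\tau-\Delta)|\nabla w|\le C(|\nabla Rm|\,|w|+|Rm|\,|\nabla w|)$ for $w=K(x,t;\cdot,\cdot)$. The maximum principle then applies, and the source term costs at most $C\sup|w|\int_0^{1/2}s^{-1/2}\,ds<\infty$.

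Your own Bernstein quantity also closes once you see this. Let $\sigma$ be the backward time elapsed from $s_1=\frac{s+t}{2}$, and $s'=s_1-\sigma$ the actual time. Split the cross term as $\sigma|\nabla Rm|\,|w|\,|\nabla w|\le\sqrt{\sigma}\,|\nabla Rm|\,(\sigma|\nabla w|^2+|w|^2)$. Then $F=\sigma|\nabla w|^2+A|w|^2$ satisfies $(\partial_\sigma-\Delta)F\le C\bigl(1+\sqrt{\sigma/s'}\bigr)F$. Since $\int_0^{s_1}\sqrt{\sigma/(s_1-\sigma)}\,d\sigma\le Cs_1$, Gronwall and the maximum principle give the estimate.

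Neither remedy you list works as described.
\begin{enumerate}
\item Reordering derivatives: the term $\nabla Rm*w$ in the equation for $\nabla w$ comes from differentiating the curvature term of $\Delta_L$. Along the flow it also comes from $\partial_s\Gamma\sim\nabla Rc$. No reordering of derivatives removes it.
\item Duality: a pointwise kernel bound requires $\cS$ concentrating at a point, so ``the forward problem with data $\nabla^*\cS$'' is just the original estimate restated. To make it effective you would compare $u$ with $\nabla^*v$, where $v$ is a heat flow of $3$-tensors with $v(s)=\cS$. The commutators $[\nabla^*,\Delta]$ and $[\partial_t,\nabla^*]$ along Ricci flow then produce source terms $\nabla Rm*v$.
\end{enumerate}
So the claim that integration by parts ``produces only $Rm$ terms'' is false. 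That route too rests on the integrability of $s^{-1/2}$, which is precisely the missing idea.
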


   \begin{proof}
       Since $|Rm|$ is uniformly bounded on the space-time $M \times [0,1]$, by the Shi-type estimate, we know that $t^{\frac{k}{2}}|\nabla^k Rm|$ is uniformly bounded by $C_k$ for each positive integer $k$.  In particular, on the time interval $[\frac18, 1]$, all covariant curvature derivatives are uniformly bounded.  This implies that (\ref{eqn:SI06_8}) and most of (\ref{eqn:SI06_7flow}).   The only missing thing is the following part of (\ref{eqn:SI06_7flow}).
       \begin{align}
        |\nabla_y^{g_s} K|(x,t;y,s)
        <\frac {C}{|B(x,\sqrt{t-s})|} \cdot \exp \left( -\frac {d^2(x,y)}{4D(t-s)} \right).
       \label{eqn:SI07_3} 
       \end{align}
       If we fix $(x,t)$ and set $w(y,s)=\nabla_y^{g_s}K(x,t;y,s)$, then $w$ satisfies equation (\ref{eqn:SI07_2}).   However, as $s \to 0$, we do not have a uniform bound for $\nabla Rm$. 
       The good news is that $|\nabla Rm|<C s^{-\frac12}$, which is integrable and suffices to prove (\ref{eqn:SI07_3}). The details are listed in the proof of Lemma~\ref{lem:SI06_1}.  
       Therefore, Lemma~\ref{lem:SI06_1} implies (\ref{eqn:SI07_3}). 
       Consequently, the proof of (\ref{eqn:SI06_7flow}) is complete. 
   \end{proof}

  \begin{lem}
   \label{lem:SI06_1}
    Let $\{(M, g_t), 0 \leq t \leq 1\}$ be a complete Ricci flow solution satisfying (\ref{eqn:SI18_2}).
    Let $w$ be a $(0,2)$-tensor field satisfying 
    \begin{align*}
        (-\partial_t - \Delta_L + R) w=0. 
    \end{align*}
    Then we have estimate
    \begin{align*}
        |\nabla w|(x,t) <  C \sup_{M \times [0,1]} |w|, \quad \forall \; x \in M, \; t \in [0, \frac12]. 
    \end{align*}
   \end{lem}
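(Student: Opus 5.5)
We will run a Bernstein-type maximum principle argument backward in time. Since $w$ solves a backward equation, we reverse time by setting $\tau = 1-t$. Then $\tau \in [\frac12,1]$ corresponds to $t\in[0,\frac12]$, and $w$ solves a forward heat-type equation in $\tau$ along the backward Ricci flow. The problem is that near $t=0$ (i.e. $\tau$ near $1$) we only have $|\nabla Rm|\le C t^{-\frac12}$ from Shi's estimate. In $\tau$ this reads $C(1-\tau)^{-\frac12}$, which is integrable.

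Let $A:=\sup_{M\times[0,1]}|w|$. By the Lichnerowicz equation and $|Rm|\le 1$ we have
\begin{align*}
(-\partial_t-\Delta)|w|^2 \le -2|\nabla w|^2 + C|w|^2 .
\end{align*}
By (\ref{eqn:SI07_2}), $(-\partial_t-\Delta)\nabla w = Rm*\nabla w+\nabla Rm * w$. In computing $\partial_t|\nabla w|^2$ one also picks up terms from $\partial_t g=-2Rc$, which are of the form $Rm*\nabla w*\nabla w$. Hence
\begin{align*}
(-\partial_t-\Delta)|\nabla w|^2 \le -2|\nabla^2 w|^2 + C|\nabla w|^2 + C t^{-\frac12}|w|\,|\nabla w| .
\end{align*}

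\textbf{Step 1: interior gradient bound.} On $[\frac14,1]$ all curvature derivatives are bounded. Apply the standard Shi/Bernstein argument to $F=(\tau-\frac34)\ldots$, or more simply to $F=(1-t-\frac14)\,|\nabla w|^2 + \beta|w|^2$ on $t\in[\frac14,\frac34]$. This gives $|\nabla w|(\cdot,\frac12)\le CA$, and in fact $|\nabla w|\le CA$ on $[\frac14,\frac12]$.

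\textbf{Step 2: propagation toward $t=0$.} Consider $F=|\nabla w|^2+\beta|w|^2$ on $[0,\frac12]$ with $\beta$ large. Using $Ct^{-\frac12}|w||\nabla w|\le |\nabla w|^2 + C t^{-1}A^2$ alone would give a non-integrable term, so the splitting must be chosen with care. One option is $Ct^{-\frac12}|w||\nabla w|\le t^{-\frac12}(|\nabla w|^2+C|w|^2)$. Then
\begin{align*}
(-\partial_t-\Delta)F\le C(1+t^{-\frac12})F + (C-2\beta)|\nabla w|^2 .
\end{align*}
Taking $\beta\ge C$ yields $(-\partial_t-\Delta)F\le C(1+t^{-\frac12})F$. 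Multiply by the integrating factor $e^{\phi(t)}$ with $\phi(t)=C(t+2\sqrt t)$, which is bounded on $[0,1]$. Then $G=e^{\phi}F$ is a subsolution of the backward heat equation:
\begin{align*}
(-\partial_t-\Delta)G = e^{\phi}\bigl((-\partial_t-\Delta)F - \phi' F\bigr)\le 0,
\end{align*}
since $\phi'=C(1+t^{-\frac12})$.

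\textbf{Step 3: maximum principle.} The maximum principle on a complete manifold with bounded curvature, run backward from $t=\frac12$, gives $\sup_{t\le\frac12} G\le \sup G(\cdot,\frac12)\le CA^2$ by Step 1. This yields the claim.

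\textbf{Main obstacle.} The maximum principle on the noncompact manifold needs $F$ to be a priori bounded (or of controlled growth) on $[0,\frac12]$. I would handle this by working with the actual heat kernel $w=K(x_0,t_0;\cdot,\cdot)$, which has qualitative decay. If $\nabla w$ is not a priori bounded near $t=0$, where $g_0$ has no derivative bounds, I would first run the argument on $[\epsilon,\frac12]$. There $|\nabla w|$ is bounded by smoothness and Shi's estimates, and the constants above are independent of $\epsilon$, so letting $\epsilon\to0$ finishes the proof. For the bounded-growth maximum principle I would use the Ecker–Huisken / Karp–Li type version with a cutoff built from a distance-like function with bounded Hessian (Shi/Tam). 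This is also where the details require the most care.
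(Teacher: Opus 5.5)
Your proposal is correct and follows essentially the same route as the paper. Both arguments reverse time, derive the Bernstein-type inequality for $\nabla w$ containing the $\nabla Rm * w$ term, and use Shi's bound $|\nabla Rm|\le Ct^{-\frac12}$, whose integrability on $[0,\frac12]$ lets the maximum principle close. The differences are technical: you work with $|\nabla w|^2+\beta|w|^2$ and an explicit integrating factor rather than with $|\nabla w|$, which avoids the Kato step. You also spell out the interior bound at $t=\frac12$ and the justification of the maximum principle on a noncompact manifold, both of which the paper leaves implicit.
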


   \begin{proof}
       Let $\tau=1-t$.
       Thus, $w=w_{ij}dx^idx^j$ satisfies the differential equation 
       \begin{align*}
           (\partial_{\tau}-\Delta_L +R) w_{ij}=0. 
       \end{align*}
       Taking derivative in space direction implies 
       \begin{align*}
           \nabla_k \partial_{\tau} w_{ij} - \nabla_k \Delta_L w_{ij} + R_k w_{ij} + Rw_{ij,k}=0. 
       \end{align*}
       Since
       \begin{align*}
           (\partial_{\tau} \nabla w)_{ijk}=\nabla_k (\partial_{\tau} w)_{ij}
             -(R_{qj,k}+R_{qk,j}-R_{jk,q})w_{iq}
             -(R_{qi,k}+R_{qk,i}-R_{ik,q})w_{jq}, 
       \end{align*}
       we have
       \begin{align*}
           &\quad (\partial_{\tau}-\Delta) w_{ij,k} +(R_{qj,k}+R_{qk,j}-R_{jk,q})w_{iq}
             +(R_{qi,k}+R_{qk,i}-R_{ik,q})w_{jq}\\
           &=- R_k w_{ij} -R w_{ij,k}
             +[\nabla_k \Delta_L - \Delta \nabla_k] w_{ij}. 
       \end{align*}
       Note that
       \begin{align*}
         &\quad [\nabla_k \Delta_L - \Delta \nabla_k] w_{ij}\\
         &=2R_{ipqj,k}w_{pq} - (R_{kq,i}+R_{iq,k}-R_{ki,q})w_{qj} - (R_{kq,j}+R_{jq,k}-R_{kj,q})w_{qi}\\
         &\quad +2\left( R_{kpqi}w_{qj,p} 
       +R_{kpqj}w_{iq,p} +R_{ipqj}w_{pq,k}\right) -\left(R_{ip}w_{pj,k} +R_{jp}w_{pi,k}+R_{kq}w_{ij,q} \right). 
       \end{align*}
       It follows that 
       \begin{align*}
           &\quad (\partial_{\tau}-\Delta) w_{ij,k}\\
           &=2R_{ipqj,k}w_{pq} - 2(R_{kq,i}+R_{iq,k}-R_{ki,q})w_{qj} - 2(R_{kq,j}+R_{jq,k}-R_{kj,q})w_{qi} -R_{k}w_{ij}\\
         &\quad -R w_{ij,k} +2\left( R_{kpqi}w_{qj,p} 
       +R_{kpqj}w_{iq,p} +R_{ipqj}w_{pq,k}\right) -\left(R_{ip}w_{pj,k} +R_{jp}w_{pi,k}+R_{kq}w_{ij,q} \right). 
       \end{align*}
      In short,  we have
      \begin{align*}
          &\quad (\partial_{\tau}-\Delta) |\nabla w|^2\\
          &=2 \langle (\partial_{\tau}-\Delta) \nabla w, \nabla w\rangle -2|\nabla \nabla w|^2
          -2(R_{ip}w_{ij,k}w_{pj,k}+R_{jq}w_{ij,k}w_{iq,k}+R_{kl}w_{ij,k}w_{ij,l}) \\
         &\leq C(|\nabla Rm||w||\nabla w| +|Rm||\nabla w|^2), 
      \end{align*}
      which implies
       \begin{align*}
          (\partial_{\tau}-\Delta) |\nabla w| \leq  C(|\nabla Rm||w| +|Rm||\nabla w|).
       \end{align*}
       On the Ricci flow spacetime with $0 \leq t \leq \frac12$, we know $|Rm| \leq 1$ and $|\nabla Rm|<\frac{C}{\sqrt{t}}$. 
        As $\int_0^{\frac12} \frac{1}{\sqrt{t}} dt$ is finite, we can obtain the desired gradient estimate of $w$ from the above inequality and the maximum principle. 
   \end{proof}

\section{Short-time existence and uniqueness}
\label{sec:exist}

In this section, we study the short-time existence of a Ricci-Deturck flow initiated from a metric $g_0$ satisfying (\ref{eqn:SI18_1}). 
We also set
\begin{align}
    \bar{g}=g_0.    \label{eqn:SI1_2}
\end{align}

Using $\bar{g}$ as the default metric and $d\bar{g}$ as the default volume form, we follow Koch-Lamm~\cite{koch2012geometric} to define two functional spaces and their related norms.  

The localized norm $X_{B(x,r)}$ for $x \in M$ and $0 < r^2 < T$ is defined as follows:
\begin{align}
        \|f\|_{X_{B(x,r)}} &:=\| f \|_{L^\infty(P(x,r^2))}
         + \frac{\| \nabla f \|_{L^2(P(x,r^2))}}{|B(x,r)|^{\frac{1}{2}}}
         +r^{\frac{n+2}{n+4}} 
         \frac{\| \nabla f \|_{L^{n+4}(\Omega(x,r^2))}}{|B(x,r)|^{\frac{1}{n+4}}},
        \label{1def:local_X}
\end{align}
where
\begin{align}
    P(x,r^2) &:=B(x,r) \times (0,r^2), \label{eqn:SA11_3}\\
    \Omega(x,r^2) &:=B(x,r) \times (\frac{r^2}{2}, r^2). \label{eqn:SA11_4}
\end{align}

We define the localized norm $Y_{B(x,r)}$ for $x \in M$ and $0 <r^2< T$ in steps.
First, we define
\begin{subequations}
    \begin{align*}
            \| f_0 \|_{Y^0_{B(x,r)}} 
            &:=  \frac{\| f_0 \|_{L^1(P(x,r^2))}}{|B(x,r)|} 
            +r^{\frac{2n+4}{n+4}}  
            \frac{\| f_0 \|_{L^{\frac{n+4}{2}}(\Omega(x,r^2))}}{|B(x,r)|^{\frac{2}{n+4}}},\\
             \| f_1 \|_{Y^1_{B(x,r)}} &:= 
             \frac{\| f_1 \|_{L^2(P(x,r^2))}}{|B(x,r)|^{\frac{1}{2}}} 
            +r^{\frac{n+2}{n+4}}  \frac{\| f_1 \|_{L^{n+4}(\Omega(x,r^2))}}{|B(x,r)|^{\frac{1}{n+4}}}.
    \end{align*}
\end{subequations}
Then we set 
 \begin{align}
            \| f \|_{Y_{B(x,r)}} &:=\inf_{f = f_0 + \nabla^\ast f_1} \left\{ \| f_0 \|_{Y^0_{B(x,r)}} + \| f_1 \|_{Y^1_{B(x,r)}} \right\}.
\label{1def:local_Y}
\end{align}

We also define the global norms $X_T$ and $Y_T$:
\begin{align}
    \begin{aligned}
        \| f \|_{X_T} &:=\sup_{x \in M, 0 < r^2 < T} \| f \|_{X_{B(x,r)}},\\
        \| f_0 \|_{Y^0_T} &:=\sup_{x \in M, 0 < r^2 < T} \| f_0 \|_{Y^0_{B(x,r)}},\\
        \| f_1 \|_{Y^1_T} &:=\sup_{x \in M, 0 < r^2 < T} \| f_1 \|_{Y^1_{B(x,r)}},\\
        \| f \|_{Y_T} &:=\inf_{f = f_0 + \nabla^\ast f_1} \left\{ \| f_0 \|_{Y^0_T} + \| f_1 \|_{Y^1_T} \right\}.
    \end{aligned}
\label{eqn:X_and_Y_norm}    
\end{align}

Compared with Koch-Lamm~\cite{koch2012geometric}, the only differences are the extra volume terms $|B(x,r)|$.  Therefore, the above definition can be regarded as the renormalized version of the norm defined in~\cite{koch2012geometric}. 

It is clear that $\| \cdot \|_{X_T}$ and $\|\|_{Y_T}$ are really norms, i.e., they commute with scalings, are positive away from the origin and satisfy triangle inequalities.  For example, let us check that $\|\cdot\|_{X_T}$ satisfies the triangle inequality: 
\begin{align*}
    \| u + v \|_{X_T} \leq \| u \|_{X_T} + \| v \|_{X_T}.
\end{align*}
Suppose that the maximum value is achieved at $(x,r)$. Then we have
\begin{align*}
    \begin{aligned}
        \| u + v \|_{L^\infty(P(x,r^2))} \leq \| u \|_{L^\infty(P(x,r^2))} + \| v \|_{L^\infty(P(x,r^2))}.
    \end{aligned}
\end{align*}
Similar results hold for the $L^2$ and $L^{n+4}$ norms. Thus, $\| \cdot \|_{X_T}$ satisfies the triangle inequality.

The Banach spaces $X_T$ and $Y_T$ are defined as the completions of smooth tensor fields on $M$ with respect to the corresponding norms.  We shall apply Banach's fixed point principle for contraction mappings in $X_T$.  Note that $X_T$ is a complete functional space contained in $L^\infty(M)$. \\

Suppose $g_t$ satisfies the Ricci-Deturck flow with the background metric $\bar{g}=g_0$.
Define
\begin{align}
    h:=h_t=g_t-g_0.    \label{eqn:SJ15_10}
\end{align}
It follows from Proposition \ref{prop:RD_operator_compute} that
\begin{align*}
    \partial_t h_t=\partial_t g_t=P_{\bar{g}}(g_t)=P_{\bar{g}}(g_0+h_t)=P_{\bar{g}}(g_0) + L h_t +\cQ [h_t]. 
\end{align*}
Denote $h_t$ by $h$ for simplicity. The above equation is equivalent to the following.
\begin{align}
    (\partial_t-L) h=P_{\bar{g}}(g_0) + \cQ [h].  \label{eqn:SI_4}
\end{align}
Since $\bar{\gamma}_{ij}^k=\bar{\Gamma}_{ij}^k-\Gamma_{ij}^k=0$ now, it is clear that
 \begin{align*}
        L h _{ij} 
        =\Delta h_{ij} + (R_{ipqj} + R_{jpqi} )h_{pq}  -  h_{ip}R_{pj}  - h_{jp}R_{pi} =\Delta_L h_{ij}
 \end{align*}
 where $\Delta_L$ is the Lichnerowitz Laplacian with respect to the metric $g_0$. 
 For simplicity, we can write
 \begin{align}
     \Delta_L h= \Delta h + E*h   \label{eqn:SI1_5}
 \end{align}
 with $|E| \leq C(n)$ by the assumption of bounded curvature (\ref{eqn:SI18_1}). 
 Also, $P_{\bar{g}}g_0=-2Rc(g_0)$, which we denote as $Z$ for simplicity.  Then (\ref{eqn:SI_4}) becomes
\begin{align}
   (\partial_t-L) h =(\partial_t-\Delta_L) h =-2Rc(g_0) + \cQ [h] =Z +\cQ [h].    \label{1RD_ptb_from_g0}
\end{align}
It is clear that $|Z|\leq 2(n-1)$.  
The term $\cQ [h]$ is non-linear and satisfies
\begin{align}
            \cQ [h]=\nabla^\ast \cS [h] + \cR [h].   
        \label{eqn:SA09_02}
\end{align}
If $|h|<\frac12$, it follows from (\ref{eqn:SI18_1}) and direct calculation that 
 \begin{align}
            |\cS [h]| \leq  C|h||\nabla h|, \quad
            |\cR [h]| \leq  C(|h|^2 + |\nabla h|^2).
        \label{eqn:SA09_2}   
 \end{align}

 Let $K$ be the fundamental solution for $\partial _t - L $ in \eqref{1RD_ptb_from_g0}. 
 Then each $h$ solves the differential equation~\eqref{1RD_ptb_from_g0} if and only if it solves the following integral equation: 
 \begin{align}\label{1equ:int_ptb}
    h(x,t) = \int_0^t \int_{M} \big\{ K(x,t;y,s) (Z + \cR [h])(y,s) + \nabla_y K(x,t;y,s) \cS [h](y,s) \big\} \, \dd y \, \dd s, 
 \end{align}
 where $\dd y$ is the canonical measure determined by the Riemannian metric $g_0$.
 Note that $dy$ is independent of time.   We shall regard each solution of (\ref{1equ:int_ptb}) as a fixed point of some contraction mapping. Then the solution can be found via Banach's fixed point theorem. 
 In order to establish the contraction property, we need the following two estimates: Proposition~\ref{1clm:X<Y} and~\ref{1clm:Y<X^2}. 

\begin{prop}\label{1clm:X<Y}
	Suppose $0<T<1$ and $\cQ  \in Y_T $. Then there exists a constant $C = C(n)$ such that 
    \begin{align}
    \left \|  \int_0^t \int_M  K(x,t;y,s)\cQ (y,s)\dd y\dd s \right\|_{X_T} \leq C \|\cQ \|_{Y_T}.
    \label{eqn:SB13_1}
    \end{align}
\end{prop}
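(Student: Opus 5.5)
Since the left side of (\ref{eqn:SB13_1}) is linear in $\cQ$ and the $Y_T$-norm is an infimum over decompositions, it suffices to fix a decomposition $\cQ = f_0 + \nabla^\ast f_1$ and prove
\begin{align*}
\Big\| \int_0^t\!\!\int_M K f_0 \Big\|_{X_T} \le C\|f_0\|_{Y^0_T}, \qquad
\Big\| \int_0^t\!\!\int_M \nabla_y K\, f_1 \Big\|_{X_T} \le C\|f_1\|_{Y^1_T}.
\end{align*}
Here we have integrated by parts in $y$ to move $\nabla^\ast$ onto the kernel, as in (\ref{1equ:int_ptb}). Fix $x_0\in M$ and $r^2<T$. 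Each of the three pieces of $\|\cdot\|_{X_{B(x_0,r)}}$ must be controlled: the $L^\infty$ bound on $P(x_0,r^2)$, the scaled $L^2$ gradient bound, and the scaled $L^{n+4}$ gradient bound on $\Omega(x_0,r^2)$.

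\textbf{Pointwise bound.} For $(x,t)\in P(x_0,r^2)$ I will use Proposition~\ref{prop:SI06_0}, i.e.\ (\ref{eqn:SI06_7}), which gives $|K|+\sqrt{t-s}|\nabla_y K|\lesssim |B(x,\sqrt{t-s})|^{-1}e^{-d^2/4D(t-s)}$. I split the $(y,s)$-integration dyadically in time, $t-s\sim \rho^2$ with $\rho\le\sqrt t$, and cover $M$ by balls $B(z_j,\rho)$. This needs a cover with bounded overlap, which I get from volume doubling. Doubling holds on scales $\le 1$ by Bishop--Gromov under $|Rm|\le1$, with no injectivity radius needed; this is exactly why the normalization by $|B(x,r)|$ was introduced.

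On each piece the Gaussian factor beats the volume ratio $|B(z_j,\rho)|/|B(x,\rho)|\le e^{C(d+\rho)}$ and sums to a constant. For the $f_0$ piece:
\begin{itemize}
\item for $s$ far from $t$, the $L^1$ part of the $Y^0$ norm on $P(z_j,\rho^2)$ gives the bound;
\item for $s$ near $t$, the $L^{(n+4)/2}$ part on $\Omega$-type cylinders gives it, using Hölder with $\|K\|_{L^{(n+4)/(n+2)}}$. The scaling exponents match precisely: $\frac{2n+4}{n+4}$ pairs with the dual exponent.
\end{itemize}
The $f_1$ piece is the same, using $L^2$ and $L^{n+4}$ with $|\nabla_yK|$. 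For $r$ small compared with $t$, the cylinders $P(z,\rho^2)$ with $\rho\le\sqrt T$ still cover $(0,t)$ because $t<T$. I will translate the time-shifted cylinders back to standard ones by covering them with finitely many cylinders $P$ and $\Omega$ of comparable radius, again via doubling.

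\textbf{$L^2$ gradient bound.} I will use an energy estimate. Let $v$ solve $(\partial_t-\Delta_L)v=f_0+\nabla^\ast f_1$ with $v(0)=0$. Localize with a cutoff $\eta$ supported in $B(x_0,2r)$ and compute
\begin{align*}
\int_{P(x_0,r^2)}|\nabla v|^2 \lesssim \int_{P(x_0,2r)}\big(|v||f_0| + |f_1|^2 + r^{-2}|v|^2\big) + \int |E||v|^2.
\end{align*}
The already proved $L^\infty$ bound on $v$, combined with the $L^1$ and $L^2$ parts of $Y$ and doubling $|B(x_0,2r)|\le C|B(x_0,r)|$, closes this estimate. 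The curvature term $|E||v|^2$ costs only $r^2\le T<1$.

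\textbf{$L^{n+4}$ gradient bound on $\Omega(x_0,r^2)$ --- the main obstacle.} I split the source into the part supported near $\Omega$, namely in $B(x_0,2r)\times(r^2/4,r^2)$, and the remainder. For the remainder, the derivative estimate (\ref{eqn:SI06_8}) for $\mathcal K=\nabla_x\nabla_yK$, and for $\nabla_xK$, gives pointwise bounds $\lesssim r^{-1}\|\cdot\|_Y$ in the same way as the $L^\infty$ step, since those sources are separated from $\Omega$. For the local part:
\begin{itemize}
\item the $f_0$ term is handled by parabolic $W^{2,1}_{(n+4)/2}$-type (or Hardy--Littlewood--Sobolev-type) bounds from the Gaussian bound on $\nabla_xK$;
\item the $f_1$ term needs the Calder\'on--Zygmund inequality for the singular integral $\int\!\!\int\mathcal K f_1$ on $L^{n+4}$.
\end{itemize}
Without an injectivity radius, the approach I plan to try first is to lift to the tangent space: pull back via $\exp_{x_0}$ on the ball of radius $\sim1$, where the pulled-back metric is $C^{2}$-close to Euclidean by (\ref{eqn:SI18_1}). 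The heat kernel of the lifted metric is then compared to the projected kernel, which is a sum over deck-like preimages. The lifted kernel satisfies classical CZ theory, using $L^2$ boundedness plus the Hörmander condition from the $\nabla_x\mathcal K$ and $\partial_t\mathcal K$ bounds in (\ref{eqn:SI06_8}). Alternatively, I will run the CZ argument directly on the doubling space of parabolic cylinders: $L^2$ boundedness follows from the energy identity, and the kernel regularity from (\ref{eqn:SI06_8}). I expect this CZ step to be the hardest part of the proof.
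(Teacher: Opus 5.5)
Your plan is essentially the paper's proof. It uses the same ingredients:
\begin{itemize}
\item a near-optimal decomposition $\cQ=\cR+\nabla^\ast\cS$;
\item for the $L^\infty$ term, the Gaussian bound (\ref{eqn:SI06_7}) with H\"older on $\Omega(x,t)=B(x,\sqrt t)\times(t/2,t)$, plus an $L^1$/$L^2$ covering argument off $\Omega(x,t)$;
\item for the $L^2$ gradient term, the cut-off energy estimate;
\item for the $L^{n+4}$ gradient term, a near/far split with a Young-type bound for $\cR$, and for $\cS$ the combination of $L^2$-boundedness from the energy estimate with Calder\'on--Zygmund on the doubling parabolic space, using the $x$-regularity of $\mathcal K$ from (\ref{eqn:SI06_8}) (the paper cites Han--Wang).
\end{itemize}
This last item is your ``alternative'' route, and it is the one to follow. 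Two of your sub-claims need correcting. First, the $\exp_{x_0}$-lifting route does not work as described. On a ball, $\exp_{x_0}$ is only a local diffeomorphism, not a covering, so the incomplete lifted metric has no canonical heat kernel and there is no deck-sum formula relating it to $K$. Second, a time-shifted piece $B(z,\rho)\times(t-2\rho^2,t-\rho^2)$ with $\rho\ll\sqrt t$ cannot be covered by finitely many $Y$-cylinders of comparable radius, because all of them start at time $0$. Near $s=t$ you must therefore use the radius-$\sqrt t$ cylinders, for example H\"older on all of $\Omega(x,t)$ as the paper does.
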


\begin{proof}
 Note that $\cQ  =\nabla^\ast  \cS+   \cR$ for some $\cR \in Y^0_T $ and $\cS \in Y^1_T$.  However, this decomposition does not uniquely determine $\cS$ and $\cR$. This gives us the freedom to choose $\cR$ and $\cS$ properly such that
 \begin{align}
     \| \cQ  \|_{Y_T} \leq \| \cR\| _{Y^0_T} + \| \cS\|_{Y^1_T} \leq 2 \| \cQ  \|_{Y_T}.
 \label{eqn:SC01_10}
 \end{align}
 In order to prove (\ref{eqn:SB13_1}), it suffices to show that
    \begin{align}
        \left   \|  \int_{M\times [0,r^2]} \big( K(\cdot ,\cdot ;y,s)  \cR(y,s) +   
        \nabla_y  K(\cdot,\cdot;y,s)   \cS(y,s) \big)  \dd y \dd s \right  \|_{X_{B(x_0 ,r)}} \leq C  \| \cQ   \|_{Y_T}
    \label{eqn:SA11_2}    
    \end{align}
 for all $x_0 \in M, r \in (0, \sqrt{T}]$. 

  Define
    \begin{align}
    \xi(x,t) &:=    
        \int_{M\times [0,t]} \big\{ K(x,t;y,s) \cR(y,s) +\nabla_y K(x,t;y,s)   \cS(y,s) \big\}  \dd y \dd s. 
        \label{1eqn:h0add}
    \end{align}
    It follows from this and from the definition of the local $X$-norm in (\ref{1def:local_X}) that the left hand side of (\ref{eqn:SA11_2}) is the same as the one in (\ref{eqn:SA11_2}). 
    \begin{align}
        \| \xi \|_{X_{B(x_0,r)}} &:=\| \xi \|_{L^\infty(P(x_0,r^2))}
         + \frac{\| \nabla \xi \|_{L^2(P(x_0,r^2))}}{|B(x_0,r)|^{\frac{1}{2}}}
         +r^{\frac{n+2}{n+4}} 
         \frac{\| \nabla \xi \|_{L^{n+4}(\Omega(x_0,r^2))}}{|B(x_0,r)|^{\frac{1}{n+4}}}, 
    \label{eqn:SC01_6}     
    \end{align}
    which consists of three parts.  We shall estimate them term by term.\\
	
	\textit{Step 1. Estimate of the $L^\infty$ term.}\\
	
	Fix any $(x,t) \in P(x_0,r)=B(x_0, r)\times (0,r^2)$. 
	It is clear that
	\begin{align}
    \begin{aligned}
			|\xi(x,t)|&\leq \underbrace{ \int_{ \Omega(x,t)} | K(x,t;y,s)  \cR(y,s)| +|   \nabla_y  K(x,t;y,s)  \cS(y,s) |  \dd y\dd s}_{I}\\
			&\quad +  \underbrace{\int_{M\times [0,t] \backslash \Omega(x,t)} | K(x,t;y,s)\cR(y,s)| 
            +|\nabla_y K(x,t;y,s) \cS(y,s)|  \dd y\dd s}_{II},
    \end{aligned}
    \label{eqn:SI02_2}
	\end{align}
    where $\Omega(x,t)=B(x,\sqrt{t}) \times (\frac{t}{2},t)$. We shall estimate I and II separately using the following heat kernel estimate (cf. Proposition~\ref{prop:SI06_0}) repeatedly. 
    \begin{align}
       \sqrt{t-s}|\nabla_y K(x,t;y,s)|+|K(x,t;y,s)| \leq  
       \frac{C}{|B(x,\sqrt{t-s})|} e^{-\frac{d^2(x,y)}{4D(t-s)}}.
       \label{eqn:SI02_3}
    \end{align}

	We first estimate part I in (\ref{eqn:SI02_2}). 
    By H\"older inequality, it is clear that 
	\begin{align}
			I \leq  \| K(x,t;\cdot, \cdot )\|_{L^{\frac{n+4}{n+2}}(\Omega(x,t))}  
            \|\cR\|_{L^{\frac{n+4}{2}}(\Omega(x,t))}
			  +  \| \nabla_y K (x,t;\cdot, \cdot) \|_{L^{\frac{n+4}{n+3}}(\Omega(x,t))}  
            \|\cS\|_{L^{n+4}(\Omega(x,t))}.
            \label{eqn:SI02_5}
	\end{align}
	In light of (\ref{eqn:SI02_3}), we have
    \begin{align*}
        |K(x,t;y,s)|\leq C | B(x,\sqrt {t-s})|^{-1}, \quad \int_M |K(x,t;y,s)| dy \leq C. 
    \end{align*}
	Regarding $|K(x,t;y,s)|^{\frac{n+4}{n+2}}$ as $|K(x,t;y,s)| \cdot |K(x,t;y,s)|^{\frac{2}{n+2}}$ and setting $\tau=t-s$ implies that
	\begin{align}
			&\quad \int_{t/2}^{t} \int_{B(x,\sqrt t )} |K(x,t;y,s)|^{\frac{n+4}{n+2}}\dd y\dd s
             \notag\\
			&\leq C\int_{0}^{t/2} |B(x,\sqrt{\tau})|^{-\frac{2}{n+2}} \left\{ \int_{B(x,\sqrt t )} |K(x,t;y,s)| \dd y \right\} \dd \tau \notag\\
            &\leq C\int_{0}^{t/2} |B(x,\sqrt{\tau})|^{-\frac{2}{n+2}}  \left\{\int_M |K(x,t;y,s)| \dd y \right\}\dd \tau \notag\\
			&\leq  C \int_{0}^{t/2} |B(x,\sqrt{\tau})|^{-\frac{n+4}{n+2}} \dd \tau.   \label{eqn:SA10_6}
	\end{align}
    Note that we have the following volume comparison. 
	\begin{align*}
		\frac{|B(x,\sqrt t)|}{|B(x,\sqrt \tau )|} \leq \frac{V(\sqrt t)}{V(\sqrt \tau)}, \quad \forall \; \tau < t < r^2. 
	\end{align*}
    Here $V(r)$ is the volume of the ball of radius $r$ in space form of constant curvature $-1$.  It is well known that
    \begin{align}
        V(r)=C_n \int_0^r \sinh^{n-1} \rho d\rho \leq C e^{(n-1)r}.   \label{eqn:SI07_10}
    \end{align}
    Since $0<r<\sqrt{T} \leq 1$, we know that $\sinh t$ is comparable to $t$.
    Consequently, 
    \begin{align*}
        \frac{1}{C(n)} \left(\frac{t}{\tau}\right)^{\frac{n}{2}}
        \leq \frac{V(\sqrt t)}{V(\sqrt \tau)} 
        \leq C(n) \left(\frac{t}{\tau}\right)^{\frac{n}{2}}.
    \end{align*}
    Therefore, we have
	\begin{align}
		|B(x,\sqrt{\tau})|^{-1}
        \leq 
        C \tau^{-\frac{n}{2}} t^{\frac{n}{2}} |B(x,\sqrt{t})|^{-1}. \label{equ:compare_ball}
	\end{align}
	Plugging the above inequality into (\ref{eqn:SA10_6}), we obtain
	\begin{align*}
				\int_{t/2}^{t} \int_{B(x,\sqrt t)} |K(x,t;y,s)|^{\frac{n+4}{n+2}}\dd y\dd s
				\leq  C  |B(x,\sqrt t)|^{-\frac{2}{n+2}} t.
	\end{align*}
	Thus, 
	\begin{align}\label{equ:estm_K_int}
			 \|  K(x,t;\cdot,\cdot) \|_{L^{\frac{n+4}{n+2}}(\Omega(x,t))} \leq C {\sqrt t}^{\frac{2n+4}{n+4}} |B(x,\sqrt t)|^{-\frac{2}{n+4}}.
	\end{align}
	
	Similarly, by (\ref{eqn:SI02_3}) we have 
    \begin{align*}
        |\nabla_y K(x,t;y,s)|\leq C(t-s)^{-\frac 1 2} | B(x,\sqrt {t-s})|^{-1}, 
        \quad \int_M |\nabla_y K(x,t;y,s)| dy \leq C(t-s)^{-\frac12}. 
    \end{align*}
	It follows that
	\begin{align*}
			&\quad \int_{t/2}^{t} \int_{B(x,\sqrt t )} |  \nabla_y K(x,t;y,s)|^{\frac{n+4}{n+3}}\dd y\dd s\\
			&\leq  C\int_{0}^{t/2}\tau^{-\frac{1}{2(n+3)}} |B(x,\sqrt{\tau})|^{-\frac{1}{n+3}}  
             \left\{\int_M |\nabla_y K(x,t;y,s)| \dd y \right\} \dd \tau\\
            &\leq  C \int_0^{t/2} \tau^{-\frac{n+4}{2(n+3)} } |B(x,\sqrt \tau)|^{-\frac{1}{n+3}} \dd \tau. 
	\end{align*}
	Using \eqref{equ:compare_ball} again, we have
	\begin{align*}
			&\quad \int_{t/2}^{t} \int_{B(x,\sqrt t )} |  \nabla_y K(x,t;y,s)|^{\frac{n+4}{n+3}}\dd y\dd s\\
			&\leq  C |B(x,\sqrt t)|^{-\frac{1}{n+3}} t^{\frac{n}{2(n+3)}} \int_0 ^{t/2} \tau ^{-\frac{n+2}{n+3}} \dd \tau 
            \leq C t ^{\frac{n+2}{2(n+3)}} |B(x, t)|^{-\frac{1}{n+3}},
	\end{align*}
	which yields
	\begin{align}\label{equ:estm_grad_K_int}
		 \|\nabla_y K (x,t;\cdot, \cdot)\|_{L^{\frac{n+4}{n+3}}(\Omega(x,t))}  \leq C {\sqrt t}^{\frac{n+2}{n+4}}| B(x,\sqrt t)|^{-\frac{1}{n+4}}.
	\end{align}
	Combining formulas (\ref{eqn:SI02_5}),~\eqref{equ:estm_K_int},~\eqref{equ:estm_grad_K_int} and the definition of the Y-norm \eqref{1def:local_Y}, we obtain
	\begin{align}
             I
             &\leq   C  {\sqrt t}^{\frac{2n+4}{n+4}} |B(x,\sqrt t)|^{-\frac{2}{n+4}} \|  \cR  \|_{L^{\frac{n+4}{2}}(\Omega(x,t))} + C{\sqrt t}^{\frac{n+2}{n+4}}| B(x,\sqrt t)|^{-\frac{1}{n+4}}\| \cS \|_{L^{n+4}(\Omega(x,t))}\notag\\
              &\leq  C \| \cR \|_{Y^0_{B(x,\sqrt t)}} + C \| \cS \|_{Y^1_{B(x,\sqrt t)}} 
               \leq  C \| \cQ  \|_{Y_T},
              \label{eqn:SA12_3}
	\end{align}
    which finishes the estimate of part I.  \\

   Then we estimate part II in (\ref{eqn:SI02_2}).  

   Fix an arbitrary point $p \in M$. 
   It follows from the heat kernel estimate (\ref{eqn:SI02_3}) and the volume comparison that 
   \begin{align*}
       &\quad\int_{(B(p,\sqrt{t}) \times [0,t]) \backslash \Omega(x,t)}  \left\{|K(x,t;y,s) \cR(y,s)| +|\nabla_y K(x,t;y,s) \cS(y,s)| \right\} \dd y \dd s\\
       &\leq C \int_{(B(p,\sqrt{t}) \times [0,t]) \backslash \Omega(x,t)}  |B(x,\sqrt{t-s})|^{-1}
        e^{-\frac{d^2(x,y)}{4D(t-s)}} 
        \left\{|\cR|(y,s) 
       + (t-s)^{-\frac 1 2 }|\cS|(y,s) \right\} \dd y \dd s\\
       &\leq \frac{C}{|B(x,\sqrt{t})|} \int_{(B(p,\sqrt{t}) \times [0,t]) \backslash \Omega(x,t)}  \left(\frac{t}{t-s} \right)^{\frac{n}{2}}
        e^{-\frac{d^2(x,y)}{4D(t-s)}} 
        \left\{|\cR|(y,s) 
       + (t-s)^{-\frac 1 2 }|\cS|(y,s) \right\} \dd y \dd s.
   \end{align*}
   For each $(y,s) \in \{B(p, \sqrt{t}) \times [0, t]\} \backslash \Omega(x, t)$, either $t-s \geq \frac{t}{2}$, or $\frac{d^2(x,y)}{t}>1$. No matter what case happens, we have
   \begin{align*}
       \left(\frac{t}{t-s} \right)^{\frac{n}{2}}
        e^{-\frac{d^2(x,y)}{4D(t-s)}} 
        = \left(\frac{t}{t-s} \right)^{\frac{n}{2}}
        e^{-\frac{d^2(x,y)}{4Dt} \cdot \frac{t}{t-s}} 
        \leq C e^{-\frac{d^2(x,y)}{8D t}}. 
   \end{align*}
   Applying the triangle inequality, we have
   \begin{align*}
       d^2(x,y) &\geq |d(x,p)-d(y,p)|^2=d^2(x,p)+d^2(y,p)-2d(x,p)d(y,p)
       \geq d^2(x,p)-2\sqrt{t}d(x,p)\\
       &\geq \frac12 d^2(x,p)-2t.
   \end{align*}
   Combining the previous two inequalities yields
   \begin{align*}
       \left(\frac{t}{t-s} \right)^{\frac{n}{2}}
        e^{-\frac{d^2(x,y)}{4D(t-s)}} 
        \leq C e^{-\frac{d^2(x,p)}{16D t}}. 
   \end{align*}
   Plugging this into the integral estimate above, we have
    \begin{align*}
       &\quad\int_{(B(p,\sqrt{t}) \times [0,t]) \backslash \Omega(x,t)}  \left\{|K(x,t;y,s) \cR(y,s)| +|\nabla_y K(x,t;y,s) \cS(y,s)| \right\} \dd y \dd s\\
       &\leq C 
       e^{-\frac{d^2(x,p)}{16 D t}}
        \left( \frac{\|\cR\|_{L^1(P(p, t ))}}{|B(x,\sqrt{t})|} 
        +\frac{\|\cS\|_{L^2(P(p, t))}}{|B(x,\sqrt{t})|^{\frac12}}）\right)
        \leq C e^{-\frac{d^2(x,p)}{16 D t}} \|\cQ \|_{Y_T},
   \end{align*}
   where we applied the H\"older inequality. 
   Now we choose $p_i$ so that $\{B(p_i, \sqrt t)\}_i$ is a cover of $M$ and $\{B(p_i, \frac12 \sqrt t)\}_i$ are disjoint. Furthermore, we can choose $p_0=x$.   It follows that 
	\begin{align}
			II 
            \leq  C \sum_{i} \exp\left(-\frac{d^2(x,p_i)}{ 16 Dt}\right)  \|\cQ \|_{Y_T}.
            \label{1estm:II_bar}
	\end{align}
	Define a subset 
	$$A_m := \{y\in M : (m-1)\sqrt t \leq d(x,y)< m \sqrt t\} .$$
	Let $N_m$ be the number of $i $'s such that $p_i \in A_m $. Since $\{B(p_i, \frac{1}{2}\sqrt t)\} $ are disjoint, we have the following volume inequality:
    \begin{align*}
       N_m \times   \min_{i:p_i\in A_m } |B(p_i, \frac 1 2 \sqrt t)| 
       \leq |A_m|.  
    \end{align*}
    Volume comparison implies that $N_m \leq \exp(C m)$ for some constant $C=C(n)$. Then 
	\begin{align}
			\sum_{i} \exp \left( -\frac{d^2(x,p_i)}{16 Dt} \right)  
            \leq \sum_m\exp  \left( -\frac{m^2}{16 D} \right) N_m
			\leq C \sum_m  \exp \left ( -C' m^2 \right )
			\leq C.
    \label{eqn:SA12_4}        
	\end{align}
	Combining (\ref{1estm:II_bar}) and (\ref{eqn:SA12_4}), we obtain the estimate of part II. 
        \begin{align}
            II \leq C  \|\cQ \|_{Y_T}.    \label{eqn:SI02_4}
        \end{align}
        
    Putting (\ref{eqn:SA12_3}) and (\ref{eqn:SI02_4}) together, we arrive at
    \begin{align}
        \| \xi \|_{L^\infty(P(x_0,r^2))} \leq C \|\cQ \|_{Y_T}. 
        \label{equ:|h|<Q_Y}
    \end{align}
    This finishes the proof of Step 1. \\
	
	\textit{Step 2. Estimate of the $W^{1,2}$ term.}\\
	
	We choose a cut-off function $\eta$ such that $\eta = 1 $ in $B(x_0,r) $ and $\eta =0 $ out of $B(x_0,2r) $ with gradient bounded by $C/r $. Since $\xi$ is given by the integral (\ref{1eqn:h0add}), $\xi $ solves the differential equation $\partial_t \xi-   L \xi =   \cR +   \nabla ^ \ast \cS $ in the weak sense with the initial value $\xi_0=0$. 
    Since there is no ambiguity here, we omit the volume elements.  Integration by parts yields
	\begin{align}
			0\leq {}& \frac{1}{2} \int_M \eta ^2 |\xi(\cdot, r^2)|^2 -\frac{1}{2} \int_M \eta ^2 |\xi(\cdot, 0)|^2 \notag\\
			= {}& \frac{1}{2} \int_0 ^ {r^2} \partial_t \int_M \eta^2 |\xi|^2
			= \int_0 ^{r^2} \int_M \eta^2 \langle   L \xi  +   \cR +   \nabla^*   \cS , \xi \rangle\notag\\
			\leq {}& -\int_0^{r^2} \int_M \eta ^2 |  \nabla \xi | ^2  +\int_0^{r^2} \int_M 2 |  \nabla \xi | |\xi| |  \nabla \eta | | \eta |   + \int_0^{r^2} \int_M \eta ^2 |  E | |\xi|^2 \notag\\
			{}&  + \int_0^{r^2} \int_M \eta ^2 |  \cR| | \xi| + \int_0^{r^2} \int_M \eta ^2 |\cS| |\nabla \xi|  + \int_0^{r^2} \int_M 2 \eta |\nabla \eta| |\cS| |\xi|.
            \label{1estm:grad_L2_int_by_part}
	\end{align}
    Using the elementary Cauchy-Schwarz inequality, we obtain 
    \begin{align*}
		\int_0^{r^2} \int_M \eta^2 |\nabla \xi|^2 \leq {}& C \int_0^{r^2} \int_M  \left\{ |\nabla \eta|^2 |\xi|^2 + \eta^2|E| |\xi|^2 
        +\eta^2|\cR||\xi| +\eta^2 |\cS|^2 \right\}. 
	\end{align*}
    Since $|E| \leq C$, it follows from the choice of $\eta$ that 
	\begin{align}
		\int_0^{r^2} \int_{B(x_0,r)}|  \nabla \xi |^2 
        &\leq C \int_0^{r^2} \int_{B(x_0,2r)} 
        \left\{ (r^{-2}+1) |\xi|^2+|\cR| |\xi|+ |\cS|^2 \right\}. 
    \label{eqn:SI01_12}    
	\end{align}
    By volume comparison, we can choose up to $N=N(n)$ points $x_i$ such that $\{B(x_i, 0.5r)\}$ are disjoint and $B(x_0, 2r) \subset \cup_{i=1}^N B(x_i, r)$.  
    Then we have 
    \begin{align}
		\int_0^{r^2} \int_{B(x_0,r)}|  \nabla \xi |^2 
        &\leq C \sum_{i=1}^N \int_0^{r^2} \int_{B(x_i, r)} 
        \left\{ (r^{-2}+1) |\xi|^2+|\cR| |\xi|+ |\cS|^2 \right\}. 
    \label{eqn:SC01_11}    
	\end{align}
    In light of the pointwise estimate~\eqref{equ:|h|<Q_Y}, we have the following.
    \begin{align}
        &\quad \int_0^{r^2} \int_{B(x_i, r)} 
        \left\{ (r^{-2}+1) |\xi|^2+|\cR| |\xi|+ |\cS|^2 \right\} \notag\\
        &\leq C (1+r^{2})|B(x_i,r)| \|\cQ \|_{Y_T}^2 + C \|\cQ \|_{Y_T} \|\cR\|_{L^1(P(x_i,r^2))} + C \|\cS\|_{L^2(P(x_i,r^2))}^2 \notag\\
        &\leq |B(x_i, r)| 
        \left\{ C(1+r^{2}) \|\cQ \|_{Y_T}^2 +  C \|\cQ \|_{Y_T} \frac{\|\cR\|_{L^1(P(x_i,r^2))}}{|B(x_i,r)|} + C \left(\frac{\|\cS\|_{L^2(P(x_i,r^2))}}{|B(x_i, r)|^{\frac12}} \right)^2 \right\}. 
        \label{eqn:SC01_12}
    \end{align}
    According to the choice of $\cR$ and $\cS$, the estimate (\ref{eqn:SC01_10}) then implies that 
    \begin{align*}
        &\frac{\|\cR\|_{L^1(P(x_i,r^2))}}{|B(x_i,r)|} \leq \| \cR\|_{Y_{B(x_i,r)}^0} 
        \leq \| \cR\|_{Y_T^0} \leq 2\|\cQ \|_{Y_T},  \\
        &\frac{\|\cS\|_{L^2(P(x_i,r^2))}}{|B(x_i,r)|^{\frac12}} \leq \| S\|_{Y_{B(x_i,r)}^1} 
        \leq \| \cS\|_{Y_T^1} \leq 2\|\cQ \|_{Y_T}. 
    \end{align*}
    Plugging them into (\ref{eqn:SC01_12}) and then into (\ref{eqn:SC01_11}), we obtain the following. 
	\begin{align*}
			\frac{\int_0^{r^2} \int_{B(x_0,r)} |\nabla \xi |^2}{|B(x_0,r)|}  
            \leq \sum_{i=1}^N  C \frac{|B(x_i, r)|}{|B(x_0, r)|} 
            \left\{ (1+r^2) \|\cQ \|_{Y_T}^2\right\} 
            \leq C \|\cQ \|_{Y_T}^2, 
	\end{align*}
    where we used the volume comparison and the fact $r<\sqrt{T}<1$ in the last step. 
    It follows that
    \begin{align}
       \frac{\| \nabla \xi \|_{L^2(P(x_0,r^2))}}{|B(x_0,r)|^{\frac{1}{2}}} 
       \leq C  \|\cQ \|_{Y_T}.   \label{eqn:SC01_7}
    \end{align}
    This finishes the proof of Step 2. \\
	
	\textit{Step 3. Estimate of the $W^{1, n+4}$ term.}\\
	
	Suppose $(x,t)\in \Omega(x_0,r^2)$. Notice that
	\begin{align*}
			\nabla \xi (x,t) 
            =\nabla_x \xi (x,t)
            =\int_{M \times [0,t]}  \nabla_x K(x,t;y,s)\cR(y,s) 
             +\nabla_x \nabla_y K(x,t;y,s)\cS(y,s) \dd y \dd s. 
	\end{align*} 
    Therefore, we have 
	\begin{align}
    \begin{aligned}
       |\nabla \xi |(x,t) 
            &\leq  \underbrace{\int_{ \{M \times [0,t]\} \cap \Omega(x_0,r^2)}   |  \nabla_x K(x,t;y,s)  \cR(y,s)  +   \nabla_x \nabla_y K(x,t;y,s)  \cS(y,s)| \dd y \dd s}_{Inn(x,t)}\\
			&\quad + \underbrace{\int_{\{M \times [0,t]\} \backslash \Omega(x_0,r^2) } |  \nabla_x K(x,t;y,s)  \cR(y,s)  +   \nabla_x \nabla_y K(x,t;y,s)  \cS(y,s)| \dd y \dd s}_{Out(x,t)}. 
    \end{aligned}
	\label{eqn:SI02_6}		
	\end{align} 
    The estimate of $Out(x,t)$ is almost the same as that of part II in (\ref{eqn:SI02_2}).
    The only difference is that the convolution kernel now is $\nabla_x K(x,t;y,s)$, instead of $K(x,t;y,s)$.  However, in the current setting we still have a Gaussian-type estimate. 
    \begin{align*}
        |\nabla_x K(x,t;y,s)| + \sqrt{t-s} |\nabla_x \nabla_y K(x,t;y,s)| \leq 
         \frac{1}{\sqrt{t-s}} \cdot \frac{C}{|B(x,\sqrt{t-s})|} \cdot e^{-\frac{d^2(x,y)}{4D(t-s)}}.
    \end{align*}
    Therefore, the same argument for~\eqref{1estm:II_bar} applies and yields 
    \begin{align*}
        Out (x,t) < C t^{-\frac12}  \|\cQ \|_{Y_T}.
    \end{align*}
    Consequently, we have
	\begin{align}\label{1estm:Out bar}
			r^{\frac{n+2}{n+4}}|B(x_0,r) |^{-\frac{1}{n+4}} 
            \|Out\|_{L^{n+4}(\Omega(x_0,r^2))} \leq C   \|\cQ \|_{Y_T}.
	\end{align} 
	Therefore, we only need to estimate the $L^{n+4}(\Omega(x_0,r^2)) $ norm of $Inn$. Without loss of generality, we assume that $\cR$ and $\cS$ are supported in $\Omega(x_0, r^2) $. 
    For any  $(x,t) \in \Omega (x_0,r^2)$,  we define 
    \begin{align*}
       &I_R (x,t):= \int_{\frac{r^2}{2}} ^{t} \int_{M}   \nabla_x  K (x,t;y,s)  \cR(y,s)\dd y \dd s, \\
       &I_S (x,t):= \int_{\frac{r^2}{2}}  ^{t} \int_{M}   \nabla_x \nabla_y K (x,t;y,s)  \cS(y,s)\dd y \dd s.
    \end{align*}
    Then it is clear that $Inn (x,t) = I_R(x,t) +  I_S (x,t)$. 
	We shall estimate $I_R$ and $I_S$ separately. 
    
	We first estimate $I_R$.
    Since $\frac{1}{n+4} + 1 = \frac{n+3}{n+4} + \frac{2}{n+4} $, we can apply Young's inequality to obtain
	\begin{align}
			 \| I_R \|_{L^{n+4}(\Omega(x_0,r^2))}  \leq {}& \sup_{(x,t)\in \Omega(x,r^2)} \|    \nabla_x K(x,t;\cdot, \cdot )  \|_{L^{\frac{n+4}{n+3}}(\Omega(x_0,r^2))}  \|   \cR  \|_{L ^{\frac{n+4}{2}}(\Omega(x_0,r^2))}. 
    \label{eqn:SA09_5}
	\end{align}
    Applying the heat kernel estimate, we obtain the following integral estimate similar to (\ref{equ:estm_grad_K_int}).
   \begin{align*}
		 \|\nabla_x K (x,t;\cdot, \cdot)\|_{L^{\frac{n+4}{n+3}}(\Omega(x,t))}  \leq C {\sqrt t}^{\frac{n+2}{n+4}}| B(x,\sqrt t)|^{-\frac{1}{n+4}} \leq C  {r}^{\frac{n+2}{n+4}}| B(x_0,r)|^{-\frac{1}{n+4}}, 
	\end{align*}
    where we used volume comparison in the last inequality. 
    Putting this into (\ref{eqn:SA09_5}), we obtain 
    \begin{align}
        &\quad r^{\frac{n+2}{n+4}}|B(x_0,r) |^{-\frac{1}{n+4}}	 \|  I_R \|_{L^{n+4}(\Omega(x_0,r^2))} \notag\\
        &\leq C  r^{\frac{n+2}{n+4}}|B(x_0,r) |^{-\frac{1}{n+4}}	\cdot  {r}^{\frac{n+2}{n+4}}| B(x_0,r)|^{-\frac{1}{n+4}} 
        \cdot   \|   \cR  \|_{L ^{\frac{n+4}{2}}(\Omega(x_0,r^2))} 
        \leq C  \|\cR\|_{Y_{B(x_0,r)}^0}. 
        \label{1estm:I_R bar}
	\end{align}

    We move on to estimate $I_S$. 
    This term is more delicate, as it deals with the convolution with $\nabla_x \nabla_y K(x,t;y,s)$, whose estimate is weaker. We define  
	\begin{align}
	    \xi_S(x,t):= \int_{\frac{r^2}{2}}^{t} \int_{M}   \nabla_y K (x,t;y,s)\cS(y,s)\dd y \dd s.   \label{eqn:SC07_1}
	\end{align}
    Then $\xi_S$ solves the equation
    $(\partial_t-L) \xi_S =\nabla^\ast  \cS$ in $\Omega(x_0,r^2)$ 
    with initial value $\xi_S|_{t=r^2/2}=0$. 
    Furthermore, we have
    \begin{align}
        \nabla \xi_S (x,t) = I_S(x,t) =\int_{\frac{r^2}{2}}^{t} \int_{M}  \nabla_x \nabla_y K (x,t;y,s)\cS(y,s)\dd y \dd s.    \label{eqn:SC01_9}
    \end{align}

    Recall that $\xi$ satisfies $(\partial_t-L) \xi=\cR + \nabla^* \cS$. We have derived estimates in ~\eqref{1estm:grad_L2_int_by_part}, (\ref{eqn:SC01_11}), and (\ref{eqn:SC01_12}). Now we apply the same method on $\xi_S$, which satisfies $(\partial_t-L) \xi_S= \nabla^* \cS$. Similarly to (\ref{eqn:SC01_11}), we have the following.
	\begin{align}
			 \int_{\frac{r^2}{2}}^{r^2}\int_{B(x_0,r)}   |\nabla \xi_S|^2 
			 \leq C \int_{\frac{r^2}{2}}^{r^2}\int_{B(x_0,r)}( |  \cS|^2 + r^{-2}|\xi_S|^2 ). 
    \label{eqn:SC07_4}         
	\end{align}
    In order to estimate the integral of $|\xi_S|^2$, we note that the map
    \begin{align}
    \mathcal{T}: f \mapsto \mathcal{T}(f)=\int_{\frac{r^2}{2}}^{t} \int_{M}   |\nabla_y K|(x,t;y,s) f(y,s)\dd y \dd s
    \label{eqn:SC07_5}
    \end{align}
    is an operator that transforms functions defined on $\Omega(x_0, r^2)$.  
    Recall the heat kernel estimate:
    \begin{align*}
        \int_M |\nabla_y K|(x,t;y,s) \dd y \leq C(t-s)^{-\frac12}, 
        \quad
        \int_M |\nabla_y K|(x,t;y,s) \dd x \leq C(t-s)^{-\frac12}. 
    \end{align*}
    Thus, it follows directly from (\ref{eqn:SC07_5}) that 
    \begin{align}
        &\|\mathcal{T}(f)\|_{L^{\infty}(\Omega(x_0, r^2))} \leq C \cdot  \|f\|_{L^{\infty}(\Omega(x_0, r^2))} \cdot \int_{\frac{r^2}{2}}^{r^2} \frac{1}{\sqrt{t-s}} \dd s \leq C r \|f\|_{L^{\infty}(\Omega(x_0, r^2))}.
    \label{eqn:SC07_2}    
    \end{align}
    On the other hand,  note that 
     \begin{align}
        \|\mathcal{T}(f)\|_{L^{1}(\Omega(x_0, r^2))} 
        &\leq \int_{\frac{r^2}{2}}^{r^2} \int_M  \int_{\frac{r^2}{2}}^{t} \int_{M}   |\nabla_y K| (x,t;y,s) |f|(y,s)\dd y \dd s  \dd x \dd t \notag\\
        &=\int_{\frac{r^2}{2}}^{r^2} \int_M \left( \int_{s}^{r^2} \int_M  |\nabla_y K| (x,t;y,s) dx dt\right) |f|(y,s) \dd y \dd s \notag\\
        &\leq C r \int_{\frac{r^2}{2}}^{r^2} 
        \int_M |f|(y,s) \dd y \dd s
        \leq Cr \|f\|_{L^{1}(\Omega(x_0, r^2))}.  
        \label{eqn:SC07_3}
    \end{align}
    By virtue of the Riesz interpolation (cf. Theorem 0.1.13 of~\cite{sogge2017fourier}),  (\ref{eqn:SC07_2}) and (\ref{eqn:SC07_3}) yield that
    \begin{align*}
        &\|\mathcal{T}(f)\|_{L^2(\Omega(x_0, r^2))} 
        \leq C r \|f\|_{L^2(\Omega(x_0, r^2))}.
    \end{align*} 
    Note that (\ref{eqn:SC07_1}) implies that $|\xi_S| \leq |\mathcal{T}(|\cS|)|$, which in turn implies that 
    \begin{align}
     \|\xi_S\|_{L^2(\Omega(x_0, r^2))} \leq
     \|\mathcal{T}(|\cS|)\|_{L^2(\Omega(x_0, r^2))} 
        \leq C r \|\cS\|_{L^2(\Omega(x_0, r^2))}.  
    \label{eqn:SI04_9}
    \end{align}
    Plugging this inequality into (\ref{eqn:SC07_4}) and applying (\ref{eqn:SC01_9}), we obtain 
    \begin{align*}
     \left  \|  \int_0^t\int_{M}  
     \nabla_x \nabla_y K (x, t;y,s)   \cS(y,s)\dd y \dd s \right \|_{L^2(\Omega(x_0, r^2))}^2
     =\int_{\frac{r^2}{2}}^{r^2}\int_{B(x_0,r)} |  \nabla \xi_S|^2
	 \leq C   \| \cS \|_{L^2(\Omega(x_0, r^2))}^2.   
	\end{align*}
    Therefore, $\mathcal{K} =\nabla_x \nabla_y K$ is a bounded operator 
    from $L^2(\Omega(x_0,r^2), Sym^2 T^*M \otimes T^*M)$ to itself.
    In light of the curvature and curvature derivative estimate (\ref{eqn:SI18_1}), 
    we have the Gaussian-type estimate of $\nabla_x\mathcal{K}$, by Proposition~\ref{prop:SI06_0}. Therefore, we can apply the Calder\'on-Zygmund inequality (cf. Han-Wang~\cite{hanwang_CZ}) in the case $p=n+4$ to obtain the following result. 
	\begin{align*}
	\| I_S \|_{L^{n+4}(\Omega(x_0, r^2))}
    =\left  \|  \int_0^t\int_{M}  
     \nabla_x \nabla_y K (x, t;y,s)   \cS(y,s)\dd y \dd s \right \|_{L^{n+4}(\Omega(x_0, r^2))}
	\leq  C  \|    \cS  \|_{L^{n+4}(\Omega(x_0, r^2))},
	\end{align*}
	which yields
	\begin{align}
	    r^{\frac{n+2}{n+4}}|B(x_0,r) |^{-\frac{1}{n+4}} \|  I_S  \|_{L^{n+4}(\Omega(x_0, r^2))}
	\leq  C  \| \cS\|_{Y_{B(x,r)}^1}.
    \label{eqn:SJ15_1}
	\end{align}  
	Combining the estimate of $Out$ , $I_R$ and $I_S $ in \eqref{1estm:Out bar}, \eqref{1estm:I_R bar} and \eqref{eqn:SJ15_1}, we obtain 
    \begin{align}
        r^{\frac{n+2}{n+4}} 
        \frac{\| \nabla \xi \|_{L^{n+4}(\Omega(x_0,r^2))}}{|B(x_0,r)|^{\frac{1}{n+4}}}
         \leq C  \|\cQ \|_{Y_T}.    \label{eqn:SC01_8}
    \end{align}
    Thus, we arrive at the desired estimate of Step 3.\\

    Plugging (\ref{equ:|h|<Q_Y}), (\ref{eqn:SC01_7}) and (\ref{eqn:SC01_8}) into (\ref{eqn:SC01_6}), we obtain (\ref{eqn:SA11_2}) and consequently finish the proof of the whole proposition.
\end{proof}

\begin{prop}
\label{1clm:Y<X^2}
    For any $T<1 $,  there exist a small constant $\delta<1$
    and a large constant $C=C(n)$ such that for each $x \in M$ and $r \in (0, \sqrt{T})$, the following property holds:
    \begin{itemize}
        \item  If $ \|  h  \|_{X_T} < \delta $, then
          \begin{align}\label{1Y<X}
			 \|  \cQ [h] \|_{Y_{B(x,r)}} \leq  C \|  h \|_{X_{B(x,r)}}^2.
	      \end{align} 
        \item If $ \| v \|_{X_T} < \delta$ and $\|  w \|_{X_T} <\delta$, then
          \begin{align}
			 \|\cQ [v] -\cQ [w]\|_{Y_{B(x,r)}} 
             \leq C (\|v\|_{X_{B(x,r)}}+  \|w\|_{X_{B(x,r)}})\|v-w\|_{X_{B(x,r)}}.  
          \label{eqn:SI04_6}   
	      \end{align}
    \end{itemize}
\end{prop}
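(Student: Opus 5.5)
The plan is to use the decomposition $\cQ[h]=\nabla^*\cS[h]+\cR[h]$ from Proposition~\ref{prop:RD_operator_compute} and (\ref{eqn:SA09_02}) as an admissible splitting in the infimum (\ref{1def:local_Y}). This gives
\begin{align*}
\|\cQ[h]\|_{Y_{B(x,r)}}\le \|\cR[h]\|_{Y^0_{B(x,r)}}+\|\cS[h]\|_{Y^1_{B(x,r)}} .
\end{align*}
The smallness $\|h\|_{X_T}<\delta$ guarantees $|h|<\frac12$ pointwise, since the $X$-norm controls $L^\infty$. The pointwise bounds (\ref{eqn:SA09_2}) then hold: $|\cS[h]|\le C|h||\nabla h|$ and $|\cR[h]|\le C(|h|^2+|\nabla h|^2)$. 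Each piece of the $Y$-norms is then bounded by Hölder, pulling out the $L^\infty(P(x,r^2))$ factor of $h$.

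For $\cS$ I first bound
\begin{align*}
\frac{\|\cS\|_{L^2(P)}}{|B(x,r)|^{1/2}}\le C\|h\|_{L^\infty(P)}\frac{\|\nabla h\|_{L^2(P)}}{|B(x,r)|^{1/2}} .
\end{align*}
In the same way, $r^{\frac{n+2}{n+4}}|B|^{-\frac1{n+4}}\|\cS\|_{L^{n+4}(\Omega)}$ is bounded by $C\|h\|_{L^\infty}$ times the third term of the $X$-norm. Both are at most $C\|h\|_{X_{B(x,r)}}^2$.

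For $\cR$, the $|h|^2$ part contributes $\|h\|_{L^1(P)}\le r^2|B|\,\|h\|^2_{L^\infty}$ with $r^2<1$. The $|\nabla h|^2$ part gives $\|\nabla h\|_{L^1(P)}/|B|=(\|\nabla h\|_{L^2(P)}/|B|^{1/2})^2$. For the $L^{\frac{n+4}{2}}$ term, $\||\nabla h|^2\|_{L^{(n+4)/2}(\Omega)}=\|\nabla h\|^2_{L^{n+4}(\Omega)}$, and the weights match exactly: $r^{\frac{2n+4}{n+4}}|B|^{-\frac{2}{n+4}}=\big(r^{\frac{n+2}{n+4}}|B|^{-\frac1{n+4}}\big)^2$. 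The $|h|^2$ contribution there is $r^{\frac{2n+4}{n+4}}|B|^{-\frac2{n+4}}\,\||h|^2\|_{L^{(n+4)/2}(\Omega)}\le r^{\frac{2n+4}{n+4}}(r^2)^{\frac{2}{n+4}}\|h\|_{L^\infty}^2\le C\|h\|^2_{L^\infty}$. Together these prove (\ref{1Y<X}). The computation is done on the same ball $B(x,r)$ and cylinder, so no covering argument is needed.

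For the difference estimate (\ref{eqn:SI04_6}) I take $\cS[v]-\cS[w]$ and $\cR[v]-\cR[w]$ as the splitting. The main work is the structural claim that these differences satisfy
\begin{align*}
|\cS[v]-\cS[w]|&\le C\big(|v-w|(|\nabla v|+|\nabla w|)+(|v|+|w|)|\nabla(v-w)|\big),\\
|\cR[v]-\cR[w]|&\le C\big(|v-w|+|\nabla(v-w)|\big)\big(|v|+|w|+|\nabla v|+|\nabla w|\big).
\end{align*}
To obtain this, I go back to the explicit formulas (\ref{eqn:SB24_2}) and (\ref{eqn:SB24_5})--(\ref{eqn:SB24_7}). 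Each term is a smooth function of $h$ (through $\hat g^{-1}=(g+h)^{-1}$, which is analytic when $|h|<\frac12$), multiplied by at most two factors of $\nabla h$. Every term vanishes at least quadratically at $(h,\nabla h)=(0,0)$, and the coefficients $Rm$ and $\bar\gamma$ are bounded by (\ref{eqn:SI18_1}); here $\bar\gamma=0$ anyway. Two facts enter. First, the mean value theorem applied to $u=\hat g^{-1}-g^{-1}$ gives $|u[v]-u[w]|\le C|v-w|$. Second, $\nabla u=-\hat g^{-1}*\hat g^{-1}*\nabla h$ by (\ref{eqn:SB21_3}). Writing each product difference telescopically, e.g. $ab-a'b'=(a-a')b+a'(b-b')$, yields the stated bounds. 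With these pointwise bounds, the Hölder argument of the first part applies verbatim, with one factor $v-w$ and one factor from $v$ or $w$. The mixed terms such as $|\nabla(v-w)|\,|\nabla v|$ are handled by Cauchy--Schwarz in $L^2$ and $L^{n+4}$ exactly as before.

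I expect the main obstacle to be this bookkeeping check on the quadratic structure. One must confirm that no term in $I$, $II$, $III$ is linear in $h$ or contains second derivatives of $h$. The second derivatives were deliberately absorbed into $\nabla^*\cS$, so this should hold by construction. One must also confirm that all coefficients are uniformly bounded once $|h|<\frac12$. The constant $\delta$ is chosen only so that $\|h\|_{X_T}<\delta$ forces $|h|\le\frac12$, which makes the inverse-metric expansions valid.
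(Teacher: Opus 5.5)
Your proof follows the paper's route. You use $\cR[h]+\nabla^*\cS[h]$ as the admissible splitting in the infimum, apply the pointwise bounds (\ref{eqn:SA09_2}), and close with H\"older on the same cylinder, where the $Y^0$-weights are the squares of the $X$-weights. Your bound for $\cS[v]-\cS[w]$ is also the paper's, and your argument for (\ref{1Y<X}) is fine.

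One step is false as stated: your pointwise bound for $\cR[v]-\cR[w]$. Several terms of $\cR[h]$ have the form $f(h)*\nabla h*\nabla h$ with $f(0)\neq 0$. Examples are $-u^{qp}_{,q}h_{ij,p}=\hat g^{ps}\hat g^{qk}h_{ks,q}h_{ij,p}$ and $\hat\gamma^l_{lp}\hat\gamma^p_{ij}$. The telescoping you describe then produces $(f(v)-f(w))*\nabla v*\nabla v$, which has size $|v-w|\,|\nabla v|^2$. Your right-hand side is linear in $|\nabla v|$ and $|\nabla w|$, so it does not dominate this term where the gradient is large.

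Concretely, take $g$ flat and $w=v+\lambda g$ with $\lambda$ a small constant. Then $\nabla(v-w)=0$. In flat space $\cR[h]$ is a quadratic form in $\nabla h$ with coefficients homogeneous of degree $-2$ in $\hat g$. So at a point where $v=0$ one gets $\cR[w]=(1+\lambda)^{-2}\cR[v]$, a difference of order $|\lambda|\,|\nabla v|^2$. The paper's inequality (\ref{eqn:SI04_0}) omits the same term.

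The repair is one line: add $C|v-w|(|\nabla v|^2+|\nabla w|^2)$ to the pointwise bound and estimate
\begin{align*}
\big\||v-w|\,|\nabla v|^2\big\|_{Y^0_{B(x,r)}}
\le \|v-w\|_{L^\infty(P(x,r^2))}\,\big\||\nabla v|^2\big\|_{Y^0_{B(x,r)}}
\le C\|v-w\|_{X_{B(x,r)}}\|v\|_{X_{B(x,r)}}^2
\le C\|v-w\|_{X_{B(x,r)}}\|v\|_{X_{B(x,r)}},
\end{align*}
using $\|v\|_{X_{B(x,r)}}\le\|v\|_{X_T}<\delta<1$. The same bound holds with $w$ in place of $v$. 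So the smallness of $\delta$ is used here as well, not only to force $|h|\le\frac12$.
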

\begin{proof}
	
   We first show the inequality~\eqref{1Y<X}.
   
    In light of (\ref{eqn:SB13_1}), 
    it follows from the definition of the norm $X$ and $Y$ (cf. \eqref{1def:local_X} and \eqref{1def:local_Y}) that 
    \begin{align*}
         \|  | \nabla h |^2   \|_{Y^0_{B(x,r)}} 
        &=\frac{\|  |\nabla h|^2  \|_{L^1(P(x,r^2))}}{|B(x,r)|}  + r^{\frac{2n+4}{n+4}} \frac{\| |\nabla h|^2 \|_{L^{\frac{n+4}{2}}(\Omega(x,r^2))}}{|B(x,r)|^{\frac{2}{n+4}}}\\
        &\leq  C   \bigg ( \frac{\| \nabla h \|_{L^2(P(x,r^2))}}{|B(x,r)|^{\frac12}} 
        + r^{\frac{n+2}{n+4}}
        \frac{\|    \nabla h  \|_{L^{n+4}(\Omega(x,r^2))}}{|B(x,r)|^{\frac{1}{n+4}}} \bigg)^2
        \leq C \|h\|_{X_{B(x,r)}}^2.
    \end{align*}
    We also have 
    \begin{align*}
          \|  |h|^2  \|_{Y^0_{B(x,r)}} 
        &\leq   \|  h  \|_{L^\infty(P(x,r^2))}^2 
        \bigg( \frac{\| 1   \|_{L^1(P(x,r^2))}}{|B(x,r)|}
        + r^{\frac{2n+4}{n+4}} 
        \frac{\| 1 \|_{L^{\frac{n+4}{2}}(\Omega(x,r^2))}}{|B(x,r)|^{\frac{2}{n+4}}} \bigg)\\
       &\leq C r^2 \|  h  \|_{L^\infty(P(x,r^2))}^2
        \leq C \|h\|_{X_{B(x,r)}}^2. 
    \end{align*}
    Combining the previous two inequalities, we arrive at
	\begin{align}\label{1bdd:RY<hX^2}
			 \|  \cR [h]  \|_{ Y^0_{B(x,r)} } \leq 
             C\left\{ \|  |  \nabla h |^2  \|_{Y^0_{B(x,r)}} 
             +\|  |h|^2  \|_{Y^0_{B(x,r)}} \right\}
			\leq C  \|  h  \|_{X_{B(x,r)}}^2.
	\end{align}
	Similarly, we have the pointwise estimate $\cS [h] \leq  C|  \nabla h | |h|$ by (\ref{eqn:SA09_2}). Then we have
	\begin{align}
			 \|  \cS [h]  \|_{Y^1_{B(x,r)}} 
			&\leq  C \|  h  \|_{L^\infty(P(x,r^2))}  \|   \nabla h  \|_{Y^1_{B(x,r)}}
            \leq C \|  h  \|_{L^\infty(P(x,r^2))}  \|h\|_{X_{B(x,r)}} 
			\leq C  \|   h  \|_{X_{B(x,r)}}^2. 
             \label{eqn:SA14_1}
	\end{align}
	Since $  \cQ [h] =   \cR [h] +   \nabla ^\ast   \cS [h] $, it is clear that~\eqref{1Y<X} follows directly from the combination of~\eqref{1bdd:RY<hX^2} and~\eqref{eqn:SA14_1}.\\

   Now we prove (\ref{eqn:SI04_6}). 
    
   Recall that (cf. (\ref{eqn:SB24_2}) and (\ref{eqn:SC16_2}))
   \begin{align*}
     \cS [h]_{ij}^k =\big ((g+h)^{kl}-   g^{kl}\big )  \nabla_l h_{ij}.   
   \end{align*}
   It follows from the direct calculation that
	\begin{align*}
			|\cS [v]-\cS [w]| \leq C \left\{ |v-w| \cdot |\nabla v| +|w| \cdot |\nabla (v-w)| \right\}.
	\end{align*}
	By definition, we have
    \begin{align*}
			 \|  \cS [v] - \cS [w] \|_{Y^1_{B(x,r)}} 
			&\leq  C  \frac{\|  |v-w| |\nabla v|  \|_{L^2(P(x,r^2))}}{|B(x,r)|^{\frac 1 2 }} 
			+ C r^{\frac{n+2}{n+4}} \frac{\|  |v-w| |\nabla v |  \|_{L^{n+4}(\Omega(x,r^2))}}{|B(x,r)|^{\frac{1}{n+4}}}\\
			& \quad + C   \frac{\|  |w| |\nabla (v-w)|  \|_{L^2(P(x,r^2))}}{|B(x,r)|^{\frac12}} 
			+ C r^{\frac{n+2}{n+4}} \frac{\|  |w| |\nabla (v-w) |  \|_{L^{n+4}(\Omega(x,r^2))}}{|B(x,r)|^{\frac{1}{n+4}}}\\
			&\leq  C \|  v-w \|_{L^{\infty}(B(x,r))}  \|  v  \|_{X_{B(x,r)}} +   C  \|  w \|_{L^{\infty}(B(x,r))}  \|  v-w  \|_{X_{B(x,r)}}.
	\end{align*}
    Since the $L^{\infty}$-norm is bounded by the $X$-norm, we obtain the following. 
    \begin{align}
        \|\cS [v]-\cS [w]\|_{Y^1_{B(x,r)}}  
        \leq
        C (\|v\|_{X_{B(x,r)}} + \|w\|_{X_{B(x,r)}}) \|v-w\|_{X_{B(x,r)}}.
        \label{eqn:SI04_4}
    \end{align}

	By the explicit formula of $\cR [h]$, we have
	\begin{align}
			&\quad |\cR [v] - \cR [w]| \notag\\
            &\leq  C \big\{ \underbrace{|\nabla (v-w)| (| \nabla v| + |\nabla w|)}_{I} 
            +\underbrace{|v-w| (|v| + |w|)}_{II}
            +\underbrace{|\nabla (v-w)| |v|+|\nabla w | |v-w|}_{III} \big\}. 
            \label{eqn:SI04_0}
	\end{align}
	As before, we shall estimate the above inequality term by term. 

    We first estimate $I$. 
    The H\"older inequality implies
    \begin{align*}
        \frac{\||\nabla (v-w)||\nabla v|\|_{L^1(P(x,r^2))}}{|B(x,r)|}
        \leq \frac{ \|\nabla (v-w) \|_{L^2(P(x,r^2))} \cdot  \|  \nabla v  \|_{L^2(P(x,r^2))} }{|B(x,r)|}
        \leq \|v-w\|_{X_{B(x,r)}}  \|v\|_{X_{B(x,r)}}. 
    \end{align*}
    Similarly, we have
    \begin{align*}
       r^{\frac{2n+4}{n+4}} \cdot \frac{\||\nabla (v-w)||\nabla v|\|_{L^{\frac{n+4}{2}}(\Omega(x,r^2))}}{|B(x,r)|^{\frac{2}{n+4}}}
       &\leq 
       r^{\frac{2n+4}{n+4}} \frac{ \|\nabla (v-w)\|_{L^{n+4} (\Omega(x,r^2))} \cdot \|\nabla v\|_{L^{n+4}(\Omega(x,r^2))} }{|B(x,r)|^{\frac{2}{n+4}}} \\
       &\leq \|v-w\|_{X_{B(x,r)}}  \|v\|_{X_{B(x,r)}}.
    \end{align*}
	Thus, 
	\begin{align}
              \|I\|_{Y^0_{B(x,r)}} 
			 &=\| |\nabla (v-w)| |\nabla v| + |\nabla w| |\nabla (v-w)| \|_{Y^0_{B(x,r)}} \notag\\
             &\leq \| |\nabla (v-w)| |\nabla v| \|_{Y^0_{B(x,r)}} 
             +\|  |\nabla w| |\nabla (v-w)| \|_{Y^0_{B(x,r)}}  \notag\\
			 &\leq  ( \|v\|_{X_{B(x,r)}}+  \|w\|_{X_{B(x,r)}}) \|v-w\|_{X_{B(x,r)}}. 
             \label{eqn:SI04_1}
	\end{align}

    Then we estimate $II$. 
    It is obvious that both $\frac{\||v-w|\cdot |v| \|_{L^1(P(x,r^2))} }{|B(x,r)|}$ and
    $r^{\frac{2n+4}{n+4}} \frac{\|  |v-w| \cdot |v|  \|_{L^{\frac{n+4}{2}}(\Omega(x,r^2))}}{|B(x,r)|^{\frac{2}{n+4}}}$ are dominated by $r^{2} \|v-w\|_{L^\infty(B(x,r))} \cdot   \|v\|_{L^\infty(B(x,r))}$, 
    and consequently by $r^{2} \|v-w\|_{X_{B(x,r)}} \cdot \|v\|_{X_{B(x,r)}}$.
    Since $r^2<T$ is uniformly bounded, we have 
    \begin{align}
		 \|II\|_{Y^0_{B(x,r)}} 
         =\||v-w| |v| + |v-w| |w| \|_{Y^0_{B(x,r)}} 
         \leq C ( \| v \|_{X_{B(x,r)}} + \| w \|_{X_{B(x,r)}}) \|v-w\|_{X_{B(x,r)}}.
    \label{eqn:SI04_2}
	\end{align}
    
    We then move on to estimate the  term $III$.  The H\"older inequality implies 
    \begin{align*}
        \frac{\||\nabla (v-w)||v|\|_{L^1(P(x,r^2))}}{|B(x,r)|}
        \leq \frac{ \|\nabla (v-w) \|_{L^2(P(x,r^2))} }{|B(x,r)|^{\frac12}} \cdot 
        \|v\|_{L^{\infty}(P(x,r))}
        \leq \|v-w\|_{X_{B(x,r)}}  \|v\|_{X_{B(x,r)}}. 
    \end{align*}
    The same argument shows that
     \begin{align*}
        r^{\frac{2n+4}{n+4}} \cdot \frac{\||\nabla (v-w)|\cdot |v|\|_{L^{\frac{n+4}{2}}(\Omega(x,r^2))}}{|B(x,r)|^{\frac{2}{n+4}}}
       &\leq  r^2
       \cdot \frac{\||\nabla (v-w)|\|_{L^{\frac{n+4}{2}}(\Omega(x,r^2))}}{|B(x,r)|^{\frac{1}{n+4}}}
       \cdot \|v\|_{L^{\infty}(\Omega(x,r^2))}\\
       &\leq \|v-w\|_{X_{B(x,r)}}  \|v\|_{X_{B(x,r)}}.
    \end{align*}
    Consequently, 
    \begin{align*}
         \||v| |\nabla(v-w)|\|_{Y^0_{B(x,r)}} 
         \leq C \|v-w\|_{X_{B(x,r)}}  \|v\|_{X_{B(x,r)}}.
    \end{align*}
    It follows that 
    \begin{align}
         \|III\|_{Y^0_{B(x,r)}} 
         &\leq  \||v| |\nabla(v-w)|\|_{Y^0_{B(x,r)}} +\||v-w| |\nabla w|\|_{Y^0_{B(x,r)}} \notag\\
         &\leq C \|v-w\|_{X_{B(x,r)}}  (\|v\|_{X_{B(x,r)}} +\|w\|_{X_{B(x,r)}}).
    \label{eqn:SI04_3}
    \end{align}
    
    Plugging (\ref{eqn:SI04_1}), (\ref{eqn:SI04_2}) and (\ref{eqn:SI04_3}) into (\ref{eqn:SI04_0}), 
    we obtain 
	\begin{align}
			 \|\cR [v]-\cR [w]\|_{Y^0_{B(x,r)}} 
             \leq  
             C ( \|v\|_{X_{B(x,r)}}+ \|w\|_{X_{B(x,r)}}) 
             \|v-w\|_{X_{B(x,r)}}.
    \label{eqn:SI04_7}         
	\end{align}
    Therefore, the estimate (\ref{eqn:SI04_6}) follows from the combination of (\ref{eqn:SI04_4}) and (\ref{eqn:SI04_7}). 
\end{proof}

\begin{prop}
\label{prop:K_conv_Z}
    Let $T<1$. Let $Z$ be a bounded tensor field on $M$ 
    and $K$ be the heat kernel of the operator $\partial_t -L$. 
    Then there exists a constant $C=C(n)$ such that
    \begin{align}
        \left \| \int_0^t \int_{M} K(x,t;y,s) Z(y) \dd y\dd s \right \|_{X_T} \leq C T \| Z \|_{L^\infty(M)}
    \label{eqn:SC01_1}    
    \end{align}
    for $0<t <T$ and $x\in M $. 
\end{prop}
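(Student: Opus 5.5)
Set $\zeta(x,t):=\int_0^t\int_M K(x,t;y,s)Z(y)\,\dd y\,\dd s$. The plan is to estimate the three pieces of $\|\zeta\|_{X_{B(x_0,r)}}$ separately, for fixed $x_0\in M$ and $r^2<T$. Throughout we use only the kernel bounds of Proposition~\ref{prop:SI06_0}:
\begin{align*}
\int_M|K|(x,t;y,s)\,\dd y\le C, \qquad \int_M|\nabla_xK|(x,t;y,s)\,\dd y\le C(t-s)^{-\frac12}.
\end{align*}
Both follow from the Gaussian bound together with the covering argument of (\ref{eqn:SA12_4}).

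\emph{$L^\infty$ term.} We have
\begin{align*}
|\zeta(x,t)|\le \int_0^t C\|Z\|_{L^\infty}\,\dd s\le Ct\|Z\|_{L^\infty}\le CT\|Z\|_{L^\infty}.
\end{align*}

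\emph{Gradient pointwise bound.} Differentiating under the integral gives
\begin{align*}
|\nabla\zeta(x,t)|\le C\|Z\|_{L^\infty}\int_0^t (t-s)^{-\frac12}\,\dd s= C\sqrt t\,\|Z\|_{L^\infty}.
\end{align*}
This is the key pointwise bound. No integration by parts is needed, since $Z$ is only bounded and the kernel gradient is integrable in time.

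\emph{$L^2$ term.} Using $t<r^2$,
\begin{align*}
\frac{\|\nabla\zeta\|_{L^2(P(x_0,r^2))}}{|B(x_0,r)|^{\frac12}}\le \Big(\int_0^{r^2} C t\,\|Z\|_{L^\infty}^2\,\dd t\Big)^{\frac12}\le Cr^2\|Z\|_{L^\infty}\le CT\|Z\|_{L^\infty}.
\end{align*}

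\emph{$L^{n+4}$ term.} On $\Omega(x_0,r^2)$ we have $|\nabla\zeta|\le Cr\|Z\|_{L^\infty}$ and $|\Omega(x_0,r^2)|\le r^2|B(x_0,r)|$. Hence
\begin{align*}
r^{\frac{n+2}{n+4}}\frac{\|\nabla\zeta\|_{L^{n+4}(\Omega(x_0,r^2))}}{|B(x_0,r)|^{\frac1{n+4}}}\le C r^{\frac{n+2}{n+4}}\cdot r\cdot r^{\frac{2}{n+4}}\|Z\|_{L^\infty}=Cr^2\|Z\|_{L^\infty}\le CT\|Z\|_{L^\infty}.
\end{align*}
Taking the supremum over $x_0$ and $r$ gives (\ref{eqn:SC01_1}).

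The only point needing care is the bound $\int_M|\nabla_xK|\,\dd y\le C(t-s)^{-1/2}$ with $C=C(n)$, since there is no injectivity radius assumption. It follows from the Gaussian estimate (\ref{eqn:SI06_7}) and the volume comparison
\begin{align*}
|B(y,\sqrt{t-s})|\le e^{C(d(x,y)+1)}|B(x,\sqrt{t-s})|,
\end{align*}
by summing over annuli exactly as in (\ref{eqn:SA12_4}). The exponential volume growth is absorbed by the Gaussian factor.
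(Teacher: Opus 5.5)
Your proof is correct, but for the two gradient pieces of the $X$-norm it takes a different route from the paper; the $L^\infty$ piece is the same.

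\textbf{Your route.} You derive the single pointwise bound $|\nabla\zeta(x,t)|\le C\sqrt t\,\|Z\|_{L^\infty}$ from $\int_M|\nabla_xK|(x,t;y,s)\,\dd y\le C(t-s)^{-1/2}$. Both the $L^2$ and the $L^{n+4}$ terms then follow by trivial integration, each bounded by $Cr^2\|Z\|_{L^\infty}$.

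\textbf{The paper's route.} For the $L^2$ term, the paper uses the cutoff/energy estimate from Step 2 of Proposition~\ref{1clm:X<Y}. That is, it integrates by parts to get $\int\eta^2|\nabla H|^2\lesssim r^{-2}\int|H|^2+\int|Z||H|$, then inserts $|H|\le Cs\|Z\|_{L^\infty}$ and uses volume doubling from $B(x,2r)$ to $B(x,r)$. For the $L^{n+4}$ term, it reruns Step 3 of Proposition~\ref{1clm:X<Y} with $\cR=Z$, $\cS=0$, using the $Out$/$I_R$ splitting and Young's inequality. It finishes with $\|Z\|_{Y^0_T}\le CT\|Z\|_{L^\infty}$.

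\textbf{Comparison.} Both routes rest on the same kernel input, since the gradient Gaussian bound in (\ref{eqn:SI06_7}) is already needed in the paper's Step 3. Yours is shorter and gives a stronger, pointwise conclusion.

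What the paper's energy argument buys is that it carries over to Proposition~\ref{clm:X<infty}, which the paper says is proved ``almost the same'' way. For $\int_M K(x,t;y,0)h_0(y)\,\dd g_0(y)$ with merely bounded $h_0$, the pointwise bound is only $|\nabla\cdot|\le Ct^{-1/2}\|h_0\|_{L^\infty}$. This is not square-integrable in $t$ near $0$, so your argument would break down for the $L^2$ term there. The energy estimate, by contrast, still gives $\|\nabla\cdot\|_{L^2(P(x_0,r^2))}^2\le C|B(x_0,r)|\,\|h_0\|_{L^\infty}^2$.

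In the present statement the initial data is zero and the source is bounded, so this issue does not arise and your proof is complete.
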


\begin{proof}
    Denote
    \begin{align*}
     H(x,t) :=  \int_0^t \int_{M} K(x,t;y,s) Z (y) \dd y\dd s.
    \end{align*}
	Then (\ref{eqn:SC01_1}) is nothing but   
    \begin{align}
         \left \| H \right \|_{X_T} \leq C T \| Z \|_{L^\infty(M)}.
    \label{eqn:SC01_2}     
    \end{align}
    By the $L^1$-estimate of the heat kernel, we have 
    \begin{align*}
        |H(x,t)| \leq C t \| Z \|_{L^\infty(M)},
    \end{align*}
    which yields
    \begin{align}
        \|H\|_{L^{\infty}(P(x,t))} \leq C t \| Z \|_{L^\infty(M)}
        \leq C T \| Z \|_{L^\infty(M)}. 
        \label{eqn:SC01_3}
    \end{align}
    
    Fix $r \leq \sqrt{t}$.
    Choosing a cutoff function as in~\eqref{1estm:grad_L2_int_by_part},  integration by parts then implies that 
    \begin{align*}
        \int_0^{r^2}\int_{B(x,r)} |\nabla H |^2 &\leq \frac{C}{r^2} \int_0^{r^2}\int_{B(x,2r)} |H|^2 +  C\int_0^{r^2}\int_{B(x,2r)} |Z||H|\\
        &\leq C \| Z \|_{L^\infty(M)}^2  
        \cdot |B(x,2r)| \cdot \left\{ r^{-2}\int_0^{r^2} s^2 ds + \int_0^{r^2} s ds \right\}\\
        &\leq C \| Z \|_{L^\infty(M)}^2  
        \cdot |B(x,2r)| \cdot r^4. 
    \end{align*}
    In light of volume comparison, it follows from the above inequality that
    \begin{align}
        \frac{\|\nabla H\|_{L^2(P(x,r^2))}}{|B(x,r)|^{\frac {1}{2}}} \leq C r^2 \cdot \left(\frac{|B(x,2r)|}{|B(x,r)|} \right)^{\frac12}  \| Z \|_{L^\infty(M)} \leq C t \| Z \|_{L^\infty(M)}
        \leq C T \| Z \|_{L^\infty(M)}. 
    \label{eqn:SC01_4}    
    \end{align}
    Replacing $\cQ $ by $Z$, we can argue as Step 3 in Proposition~\ref{1clm:X<Y} and obtain
    \begin{align}
        r^{\frac{n+2}{n+4}} \frac{\|\nabla H (x,t)\|_{L^{n+4}(P(x,r^2))}}{|B(x,r)|^{\frac{1}{n+4}}} \leq C \| Z \|_{Y^0_T} 
        \leq CT \|Z\|_{L^\infty(M)}.
    \label{eqn:SC01_5}    
    \end{align}
    Then it is clear that (\ref{eqn:SC01_2}) follows from the combination of (\ref{eqn:SC01_3}), (\ref{eqn:SC01_4}), (\ref{eqn:SC01_5}) and the definition of the $X_T$-norm. 
\end{proof}

Then we can prove the short-time existence of the Ricci-DeTurck flow.
Recall from (\ref{eqn:SJ15_10}) that $g_t$ solves the Ricci-Deturck flow if and only if
$h_t=g_t-g_0$ solves the differential equation (\ref{1RD_ptb_from_g0}), or the integral equation (\ref{1equ:int_ptb}). 
\begin{thm}
\label{1thm:int_pert_solu_exist}
    For any metric $g_0$ satisfying (\ref{eqn:SI18_1}), there exist a constant $\delta > 0$ and a unique solution $(h_t)_{t \in [0,T]} \in X_T$ to the equation \eqref{1equ:int_ptb}:
    \begin{align}
        h(x,t) = \int_0^t \int_{M} \big\{ K(x,t; y, s) (Z + \cR [h])(y, s) + \nabla_y K(x,t; y, s) \cS [h](y, s) \big\} \, \dd y \dd s,    \label{eqn:SI05_1}
    \end{align}
    with $h_0 = 0$ such that $\| h \|_{X_T} \leq \delta$.
\end{thm}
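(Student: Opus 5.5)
The plan is to apply Banach's fixed point theorem to the map
\begin{align*}
\Phi(h)(x,t) := \int_0^t \int_{M} \big\{ K(x,t; y, s) (Z + \cR [h])(y, s) + \nabla_y K(x,t; y, s) \cS [h](y, s) \big\} \, \dd y \dd s
\end{align*}
on the closed ball $\mathcal{B}_\delta := \{h \in X_T : \|h\|_{X_T} \leq \delta\}$. Here $T$ is small, depending only on $n$, and $\delta$ is tied to $T$. First we split $\Phi(h) = H + \Psi(h)$, where $H$ is the convolution with $Z=-2Rc(g_0)$ and $\Psi(h)$ is the convolution with $\cQ[h]=\cR[h]+\nabla^*\cS[h]$. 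Since the $X_T$-norm dominates the $L^\infty$-norm, taking $\delta<\frac12$ ensures that $|h|<\frac12$ on $\mathcal{B}_\delta$. Then the pointwise bounds (\ref{eqn:SA09_2}) hold and $\cQ[h]$ makes sense.

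For the self-map property, Proposition~\ref{prop:K_conv_Z} gives $\|H\|_{X_T} \leq C_1 T\|Z\|_{L^\infty} \leq C_1' T$. Proposition~\ref{1clm:X<Y} gives $\|\Psi(h)\|_{X_T} \leq C_2\|\cQ[h]\|_{Y_T}$. Proposition~\ref{1clm:Y<X^2} controls $\|\cQ[h]\|_{Y_{B(x,r)}}$ by $C_3\|h\|_{X_{B(x,r)}}^2$. Taking the supremum over $(x,r)$ requires one step of care, because $\|\cdot\|_{Y_T}$ is an infimum over decompositions. I will use the specific decomposition $\cR[h]$, $\cS[h]$, whose $Y^0_T$ and $Y^1_T$ norms are suprema of the local norms estimated in (\ref{1bdd:RY<hX^2}) and (\ref{eqn:SA14_1}). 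This yields $\|\cQ[h]\|_{Y_T} \leq \|\cR[h]\|_{Y^0_T}+\|\cS[h]\|_{Y^1_T}\leq C_3\|h\|_{X_T}^2$. Hence
\begin{align*}
\|\Phi(h)\|_{X_T} \leq C_1' T + C_2C_3\delta^2 .
\end{align*}
Choosing $\delta$ with $C_2C_3\delta \leq \frac14$ and then $T$ with $C_1'T \leq \delta/2$ gives $\Phi(\mathcal{B}_\delta)\subset\mathcal{B}_\delta$. All constants depend only on $n$, since the heat kernel bounds of Proposition~\ref{prop:SI06_0} are dimensional and $T<1$.

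For the contraction, note that $\Phi(v)-\Phi(w)$ is the convolution of $\cQ[v]-\cQ[w]$, because the $Z$-term cancels. By Proposition~\ref{1clm:X<Y} and the second item of Proposition~\ref{1clm:Y<X^2}, again applied with the explicit decomposition $\cR[v]-\cR[w]$, $\cS[v]-\cS[w]$ and suprema taken, we get
\begin{align*}
\|\Phi(v)-\Phi(w)\|_{X_T} \leq C_2C_3(\|v\|_{X_T}+\|w\|_{X_T})\|v-w\|_{X_T} \leq 2C_2C_3\delta\|v-w\|_{X_T}\leq \tfrac12\|v-w\|_{X_T}.
\end{align*}
Since $X_T$ is a Banach space and $\mathcal{B}_\delta$ is closed in it, Banach's principle gives a unique fixed point in $\mathcal{B}_\delta$. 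This fixed point is the solution of (\ref{eqn:SI05_1}). The condition $h_0=0$ holds because every term in $\Phi(h)$ is an integral over $[0,t]$ with integrable kernel: the $L^\infty$ bound of Step 1 in Proposition~\ref{1clm:X<Y} gives $|\Psi(h)(x,t)|\to0$ as $t\to 0$, and $|H|\le Ct$.

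The main obstacle is bookkeeping rather than a new estimate. Proposition~\ref{1clm:X<Y} needs the global $Y_T$-norm, but Proposition~\ref{1clm:Y<X^2} is stated locally, so the passage from local to global must go through a fixed decomposition as described. One must also check that the quadratic structure of $\cQ[h]$ is valid uniformly, via $|h|<\frac12$ forced by $\delta$. Finally, the smallness of $T$ is needed only for the inhomogeneous term $Z$, so $\delta$ is fixed first and $T$ is chosen afterward.
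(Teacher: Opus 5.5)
Your proposal is correct and is essentially the paper's proof: a contraction argument on the closed $\delta$-ball of $X_T$ built from Propositions~\ref{prop:K_conv_Z}, \ref{1clm:X<Y} and \ref{1clm:Y<X^2}, with $\delta$ fixed first and $T$ proportional to $\delta$ chosen afterwards (the paper takes $T=\delta/(20C\|Z\|_{L^\infty(M)})$). Your extra bookkeeping makes explicit what the paper leaves implicit, namely the explicit decomposition $\cR[h]+\nabla^*\cS[h]$ to pass from local to global $Y$-norms, and $\delta<\frac12$ to keep $|h|<\frac12$. One small repair concerns $h_0=0$: Step~1 of Proposition~\ref{1clm:X<Y} gives a bound uniform in $t$, not decay, so rerun the estimates on $[0,t]$ to get $\|h\|_{X_t}\le C_1't+C_2C_3\delta\|h\|_{X_t}$, hence $\|h\|_{X_t}\le 2C_1't\to 0$.
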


\begin{proof}
    We shall use the Banach fixed point principle to derive the existence. Let $\Phi(\cdot)$ be an operator from the space $X_T$ to itself, defined by
    \begin{align}\label{1def:int_oprt}
            \Phi(h) := \int_0^t \int_{M} \big\{ K(x,t; y, s) (Z + \cR [h])(y, s) 
     +\nabla_y K(x,t; y, s) \cS [h](y, s) \big\} \, \dd y\, \dd s.
    \end{align}
    Let $B_{\delta, T}:=\{ h \in X_T :  \|h\|_{X_T} \leq \delta \}$. 
    By choosing $\delta$ and $T$ sufficiently small, it is not hard to see that $\Phi$ is a contraction mapping from $B_{\delta,T}$ to itself.
    
    Firstly, in light of Proposition \ref{prop:K_conv_Z}, by choosing $\delta$ very small and 
    $T=\frac{\delta}{20\| Z \|_{L^\infty(M)}C}$, we have 
    \begin{align*}
        \| \Phi(h) \|_{X_T} \leq C T \| Z \|_{L^\infty(M)} + C ^2 \| h \|_{X_T}^2 \leq \delta,  \quad \forall \; h \in B_{\delta, T}. 
    \end{align*}
    Thus, $\Phi$ is a map from $B_{\delta,T}$ to itself.
    
       Secondly, for each pair of points $v,w \in B_{\delta,T}$, we have
        \begin{align*}
            &\quad \| \Phi(v) - \Phi(w) \|_{X_T} \\
            &=\bigg\| \int_{M \times [0,t]} \big\{ K(x,t; y, s) (\cR [v] - \cR [w])(y, s) 
             + \nabla_y K(x,t; y, s) (\cS [v] - \cS [w])(y, s) \big\} \, \dd y\, \dd s \bigg\|_{X_T}.
        \end{align*}
    Using Proposition \ref{1clm:X<Y},  we can estimate
    \begin{align*}
            \| \Phi(v) - \Phi(w) \|_{X_T} \leq C \| \cR [v] - \cR [w] + \nabla^*\cS [v] - \nabla^*\cS [w] \|_{Y_T} 
            \leq C \| \cQ [v] - \cQ [w] \|_{Y_T}.
    \end{align*}
    By choosing $\delta$ small enough so that $2\delta<1$ and $\delta < \frac{1}{4C^2}$,
    we can apply Proposition \ref{1clm:Y<X^2} to obtain
    \begin{align*}
            \| \Phi(v) - \Phi(w) \|_{X_T} \leq {}& C \| \cQ [v] - \cQ [w] \|_{Y_T} 
            \leq \frac 1 2 \| v - w \|_{X_T}. 
    \end{align*}
    Therefore,  $\Phi$ is a contraction mapping from $B_{\delta,T}$ to itself. Thus, the Banach fixed point theorem implies that there exists a unique solution $h \in X_T$ 
    with $\| h \|_{X_T} \leq \delta$. In other words, $h$ satisfies equation (\ref{eqn:SI05_1}). 
    Since the heat kernel $K$ is smooth, we know that $h$ is smooth and satisfies the initial condition $h_0=0$. 
\end{proof}

\section{Continuous dependence}
\label{sec:cd}

In this section, we study the deformation of a Ricci flow with bounded curvature. 
Let $\{(M, g_t), 0 \leq t \leq 1\}$ be a Ricci flow that satisfies the curvature bound.
\begin{align}
    |Rm|_{g_t} \leq 1.     \label{eqn:SI01_6}
\end{align}
Let $\hat{g}_t$ be a Ricci-Deturck flow solution with background metrics $\bar{g}=g_t$. 
That is, $g_t$ and $\hat{g}_t$ satisfy the following evolution equations:
\begin{align}
\partial_t g_t &= P_{\bar{g}}(g_t)=-2Rc(g_t), \label{eqn:SI01_8}\\
\partial_t \hat{g}_t &= P_{\bar{g}}(\hat{g}_t)
=-2Rc(\hat{g}_t) + \mathcal{L}_{X_{g(t)}(\hat{g}_t)} \hat{g}_t.   \label{eqn:SI01_9}
\end{align}
Define
\begin{align}
h := h_t = \hat{g}_t - g_t.   \label{eqn:SI01_10}
\end{align}
Applying Proposition~\ref{prop:RD_operator_compute}, we note that $\bar{\gamma}_{ij}^k=0$ and obtain 
\begin{align*}
\partial_t h 
=P_{\bar{g}}(g_t + h) - P_{\bar{g}}(g_t) = \Delta_L h + \cQ [h]
=\Delta h + E \ast h + \nabla^\ast \cS [h] + \cR [h]
\end{align*}
where
\begin{align*}
    (E*h)_{ij}=2R_{iklj}h_{kl}-R_{ik}h_{kj}-R_{jk}h_{ki}. 
\end{align*}
All curvatures are calculated with respect to $g_t$ and therefore depend on time $t$. 
In short, we have
\begin{align}
    (\partial_t-\Delta)h=E*h +\cQ [h]. \label{equ:ptb_RD}
\end{align}
As the curvature is bounded along the Ricci flow by (\ref{eqn:SI01_6}), we have
\begin{align}
            |E| \leq C, \quad
            |\cS [h]| \leq  C|h||\nabla h|, \quad
            |\cR [h]| \leq  C(|h|^2 + |\nabla h|^2),
        \label{eqn:SI01_7}   
 \end{align}   
 where $|h|<\frac12$ and $C=C(n)$. \\
	
Note that the operators $\Rm,\nabla, \nabla^*, \Delta$ and $L=\Delta_L$ are all calculated with respect to $g_t$ by default.  By the assumption of curvature bound, we know that $g_t$ is equivalent to $g_0$ uniformly.
Let $K$ be the fundamental solution for the operator $\partial_t - L$ in \eqref{equ:ptb_RD}. 
The perturbation equation (\ref{equ:ptb_RD}) is equivalent to the following integral equation.
\begin{align}
	\begin{aligned}
		h(x,t) &= \int_{M} K(x,t;y,0) h_0(y) \dd g_0(y)\\
		  &\quad +\int_{M\times [0,t]} \big( K(x,t;y,s)\cR [h](y,s) 
           +\nabla_y K(x,t;y,s) \cS [h](y,s) \big)  \dd g_s(y) \dd s.
	\end{aligned}
\label{equ:int_ptb}    
\end{align}
Similarly to the previous section, we shall obtain a solution of~\eqref{equ:int_ptb} by the contraction mapping principle. 
For this purpose, we need to define different norms similar to (\ref{1def:local_X})-(\ref{eqn:X_and_Y_norm}).  
The only difference is that the metric is now evolving. We choose metric $g_0$ as the default metric to define balls: $B(x,r)=B_{g_0}(x,r)$.  However,  for the application of integration by parts, the default volume element we use is $\dd g_t$, which is evolving over time in a uniformly bounded way:
\begin{align*}
 \frac{1}{C} \dd g_0 
 \leq \dd g_t \leq C \dd g_0. 
\end{align*}
Instead of copying all the definitions here, we shall only take examples to show the difference. For example, we define
\begin{align*}
    &P(x,r^2):=B_{g_0}(x,r) \times (0, r^2), \\
    &\Omega(x, r^2):=B_{g_0}(x,r) \times (\frac{r^2}{2}, r^2). 
\end{align*}
For a smooth function $f$ defined on $M \times [0, T]$, we have
\begin{align*}
    &\|\nabla f\|_{L^2(P(x,r^2))}^2:=\int_{P(x,r^2)} |\nabla f|_{g_t}^2 dg_t dt, \\
    &\| \nabla f \|_{L^2(P(x,r^2))}^{n+4} 
      :=\int_{\Omega(x,r^2)} |\nabla f|_{g_t}^{n+4} dg_t dt, \\
    &\|f\|_{X_{B(x,r)}} :=\| f \|_{L^\infty(P(x,r^2))}
         + \frac{\| \nabla f \|_{L^2(P(x,r^2))}}{|B_{g_0}(x,r)|_{dg_0}^{\frac{1}{2}}}
         +r^{\frac{n+2}{n+4}} 
         \frac{\| \nabla f \|_{L^{n+4}(\Omega(x,r^2))}}{|B_{g_0}(x,r)|_{dg_0}^{\frac{1}{n+4}}}. 
\end{align*}

Similarly to Proposition~\ref{1clm:X<Y} and~\ref{1clm:Y<X^2}, we have the following two estimates: Proposition~\ref{clm:X<Y} and~\ref{clm:Y<X^2}.

\begin{prop}\label{clm:X<Y}
	Suppose $0<T \leq 1$. We have
    \begin{align}
       \left \| \int_{M\times [0,t]} \big( K(\cdot ,\cdot ;y,s) \cR(y,s) + \nabla_y K(\cdot ,\cdot ;y,s) \cS(y,s) \big)  \dd g_s(y) \dd s \right \|_{X_T} \leq C\|\cQ\|_{Y_T},  
    \label{eqn:SI01_14}   
    \end{align}
	where $C=C(n)$. 
\end{prop}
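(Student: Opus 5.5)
The plan is to repeat the three-step proof of Proposition~\ref{1clm:X<Y} essentially verbatim, now with heat kernel input from Proposition~\ref{prop:SI06_flow} in place of Proposition~\ref{prop:SI06_0}. Because $|Rm|_{g_t}\le 1$ on $[0,1]$, the metrics $g_t$ and $g_0$ are uniformly equivalent, and so are $\dd g_t$ and $\dd g_0$. Hence $g_t$-balls, $g_0$-balls, volumes and distances can be exchanged at the cost of a constant $C(n)$, and volume comparison for $g_0$ (curvature $\ge -1$) gives (\ref{equ:compare_ball}) and the covering counts. As before, I choose a decomposition $\cQ=\cR+\nabla^*\cS$ satisfying (\ref{eqn:SC01_10}), set $\xi$ equal to the convolution on the left of (\ref{eqn:SI01_14}), and bound the three parts of $\|\xi\|_{X_{B(x_0,r)}}$.

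\emph{Step 1 ($L^\infty$).} The estimate (\ref{eqn:SI06_7flow}) is valid for all $0<s<t\le1$, including $s$ near $0$. Lemma~\ref{lem:SI06_1} handles exactly that range for $\nabla_y^{g_s}K$. With this bound the splitting into the parabolic neighbourhood $\Omega(x,t)$ and its complement goes through unchanged. The H\"older estimates (\ref{equ:estm_K_int}) and (\ref{equ:estm_grad_K_int}) control the inner part, and the Gaussian tail together with the covering sum (\ref{eqn:SA12_4}) controls the outer part.

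\emph{Step 2 ($L^2$ gradient).} I redo the energy computation (\ref{1estm:grad_L2_int_by_part}) with the evolving volume form. The identity $\partial_t \dd g_t=-R\,\dd g_t$ and the time dependence of $|\cdot|_{g_t}$ produce extra terms bounded by $C|\xi|^2$, since $|Rc|\le C$. These are absorbed exactly like the term $|E||\xi|^2$, so (\ref{eqn:SC01_7}) follows.

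\emph{Step 3 ($L^{n+4}$ gradient).} The outer part and $I_R$ are handled as before using the first-order bounds in (\ref{eqn:SI06_7flow}). The main obstacle is $I_S$, which requires the Calder\'on--Zygmund bound (\ref{eqn:SJ14_7}). Here (\ref{eqn:SI06_8flow}) only holds for $\frac14<s<t<1$. My approach is to first treat scales $r^2\ge\frac12$. In that range $\Omega(x_0,r^2)$ lies in $t\ge\frac14$, and all curvature derivatives are bounded there by Shi's estimates. The $L^2$ bound for $\mathcal K$ follows as in (\ref{eqn:SC07_4})--(\ref{eqn:SI04_9}), via the energy estimate and Riesz interpolation of the operator $\mathcal T$. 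Combined with the Gaussian bounds on $\nabla_x\mathcal K$ and $\partial_t\mathcal K$, this lets Han--Wang's Calder\'on--Zygmund theorem upgrade to $L^{n+4}$. For smaller $r$, I would parabolically rescale the flow by $r^{-2}$ about time $r^2$, i.e.\ consider $\tilde g_\tau=r^{-2}g_{r^2\tau}$. This rescaled flow has $|\widetilde{Rm}|\le r^2\le1$, and $\Omega(x_0,r^2)$ becomes a region with $\tau\in(\frac12,1)$, where Proposition~\ref{prop:SI06_flow} applies with constants independent of $r$. All terms in the $X$- and $Y$-norms are scale invariant, so the estimate transfers back. I expect verifying this uniformity, and checking that Han--Wang's hypotheses hold for a time-dependent metric (doubling via $g_0$-volume comparison and the Gaussian regularity of $\mathcal K$), to be the delicate point. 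Assembling the three steps then gives (\ref{eqn:SI01_14}).
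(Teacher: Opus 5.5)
Your outline follows the paper's proof. It uses the same three-step splitting and the same energy estimate in Step~2; putting the terms from $\partial_t g^{ij}$ and $\partial_t\,\dd g_t=-R\,\dd g_t$ into the $C|\xi|^2$ term works as well as the paper's weight $e^{-At}$. It also takes the same route to $I_S$, through (\ref{eqn:SC07_4})--(\ref{eqn:SI04_9}) and Han--Wang. Your rescaling $\tilde g_\tau=r^{-2}g_{r^2\tau}$ is a genuine improvement on a point the paper passes over. The paper applies the Calder\'on--Zygmund inequality on $\Omega(x_0,r^2)$ for every $r$, although (\ref{eqn:SI06_8flow}) is only stated for $\frac14<s<t<1$. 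Since $|\widetilde{\Rm}|\le r^2$ and both the $L^2$ and $L^{n+4}$ bounds for $I_S$ are scale invariant, your reduction is correct.

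The gap is your claim that the outer part needs only the first-order bounds (\ref{eqn:SI06_7flow}). The outer part contains $\int \nabla_x\nabla_y K\,\cS$ over $\{M\times[0,t]\}\setminus\Omega(x_0,r^2)$, hence over every $s\in[0,\frac{r^2}{2}]$. The paper estimates this with a Gaussian bound on $\mathcal K$ itself; bounds on $\nabla_x K$ and $\nabla_y K$ separately say nothing about the mixed derivative. Your rescaling does not help here, because in rescaled time this region still reaches $\sigma=0$, which is exactly where (\ref{eqn:SI06_8flow}) is unavailable.

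There is a further problem, which the paper's split shares. Even with a full-range bound on $\mathcal K$, the estimate $Out(x,t)\le Ct^{-1/2}\|\cQ\|_{Y_T}$ does not follow ``as in Step~1''. The set $\Omega(x_0,r^2)$ is not a parabolic neighbourhood of points $(x,t)$ near its bottom or lateral boundary. So the dichotomy ``$t-s\ge t/2$ or $d(x,y)\ge\sqrt t$'' used there fails, and the excluded set reaches the singularity of $\mathcal K$.

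A repair that really uses only first-order kernel bounds is to split the source along $\tilde\Omega=B(x_0,2r)\times(\frac{r^2}{4},r^2)$.
\begin{itemize}
\item Treat the near part by your $I_R$/$I_S$ argument on $\tilde\Omega$. After rescaling it lies in $\sigma>\frac14$, and $\|\cS\|_{L^{n+4}(\tilde\Omega)}$ is controlled by $\|\cS\|_{Y^1_T}$ by covering $\tilde\Omega$ with boundedly many $\Omega(x_i,\rho^2)$, $\rho\sim r$.
\item The far part $\xi_{\mathrm{far}}$ solves $(\partial_t-L)\xi_{\mathrm{far}}=0$ in $\tilde\Omega$, where Shi's estimates control all curvature derivatives at scale $r$. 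Interior gradient estimates then give $|\nabla\xi_{\mathrm{far}}|\le Cr^{-1}\sup|\xi_{\mathrm{far}}|\le Cr^{-1}\|\cQ\|_{Y_T}$ on $\Omega(x_0,r^2)$, with the last inequality by Step~1. Apply these estimates on local lifts by $\exp$, since there is no injectivity radius bound.
\end{itemize}
Alternatively, you can obtain the missing Gaussian bound on $\mathcal K$ for small $s$ by running the proof of Lemma~\ref{lem:SI06_1} on $w(y,s)=\nabla_x^{g_t}K(x,t;y,s)$, which solves the adjoint equation in $(y,s)$. With that route you would still need the enlarged inner cylinder.
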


\begin{proof}
Following the route of the proof of Proposition~\ref{1clm:X<Y}, we set 
\begin{align}
		\xi(x,t):=  \int_{M\times [0,t]} \big( K(x,t;y,s)\cR(y,s) + \nabla_y K(x,t;y,s) \cS(y,s) \big)  \dd g_s(y) \dd s. 
		\label{eqn:h0add}    
\end{align}
Note that $\nabla_y$ is $\nabla_y^{g_s}$, the gradient calculated with respect to $g_s$. 
As the background metric $g_t$ is evolving, when $s\neq t$, it is possible that $\nabla_y^{g_s} \neq \nabla_y^{g_t}$. This is an important difference compared to the static case. 
The tensor field $\xi$ satisfies the equation
\begin{align*}
    (\partial_t -L) \xi = \cR + \nabla^* \cS. 
\end{align*}
By virtue of (\ref{eqn:h0add}),  in order to prove (\ref{eqn:SI01_14}), it suffices to show 
\begin{align*}
    \| \xi \|_{X_{B(x_0,r)}} \leq C\|\cQ\|_{Y_T}, \quad \forall x_0 \in M, \; r \in (0, \sqrt{T}), 
\end{align*}
where
 \begin{align}
        \| \xi \|_{X_{B(x_0,r)}} =\| \xi \|_{L^\infty(P(x_0,r^2))}
         + \frac{\| \nabla \xi \|_{L^2(P(x_0,r^2))}}{|B(x_0,r)|^{\frac{1}{2}}}
         +r^{\frac{n+2}{n+4}} 
         \frac{\| \nabla \xi \|_{L^{n+4}(\Omega(x_0,r^2))}}{|B(x_0,r)|^{\frac{1}{n+4}}}. 
    \label{eqn:SI02_1}     
 \end{align}
 The same as before, we shall estimate (\ref{eqn:SI02_1}) term by term. \\

	\textit{Step 1. Estimate of $\|\xi\|_{L^\infty(P(x_0,r^2))}$.} \\
	
	Take any $(x,t) \in B(x_0, r)\times [0,r^2] $ and denote the domain 
    $\Omega (x,t) = B_{g_0}(x, \sqrt t ) \times (t/2, t) $.  
    It follows from the definition (\ref{eqn:h0add}) that 
	\begin{align*}
			|\xi(x,t)|
		 &\leq   \underbrace{\int_{ \Omega(x,t)} \big(| K(x,t;y,s)\cR(y,s)| +| \nabla_y  K(x,t;y,s) \cS(y,s) | \big)  \dd g_s(y) \dd s}_{I} \notag\\
		 &\quad + \underbrace{\int_{M\times [0,t] \backslash \Omega(x,t)}  
         \big(| K(x,t;y,s)\cR(y,s)| +|\nabla_y  K(x,t;y,s) \cS(y,s)| \big)  \dd g_s(y) \dd s}_{II}. 
	\end{align*}
	The estimate of I is the same as the corresponding part in the proof of Proposition \ref{1clm:X<Y}.
    The estimate of II is almost the same. The only difference is that the metric and volume element is evolving now. However, they are all equivalent.  That is, for every $t \in [0, T]$, we have 
    \begin{align*}     
     \frac{1}{C} g_0(y) \leq g_t (y) \leq C g_0 (y), \quad   \frac{1}{C} \dd g_0(y) \leq \dd g_t(y) \leq C \dd g_0(y). 
    \end{align*}
    Thus, we can also use $g_0$ and $dg_0$ as the default metric and volume element. 
    Note that the heat kernel estimate and its gradient estimate in the current setting (cf. Proposition~\ref{prop:SI06_flow}) are the same as in the static case.  Therefore, using the same argument as in the proof of (\ref{1estm:II_bar}) and (\ref{eqn:SA12_4}), we have
    \begin{align*}
        II \leq C \sum_i e^{-\frac{d^2(x, p_i)}{4Dt}} \|\cQ \|_{Y_T} \leq C \|\cQ \|_{Y_T}. 
    \end{align*}
	Combining the estimates of I and II, we obtain 
    \begin{align}
        |\xi(x,t )| \leq C  \|  \cQ  \|_{Y_T}.   \label{eqn:SI01_19}
    \end{align}
	
	\textit{Step 2. Estimate of $\| \nabla \xi \|_{L^2(P(x_0,r^2))}$.}\\
	
	We choose a cut-off function $\eta $ such that $\eta = 1 $ in $B_{g_0}(x_0,r) $ and $\eta =0 $ out of $B_{g_0}(x_0,2r) $ with $|\nabla \eta|_{g_0} \leq C/r$.
    Since all metrics $g_t$ are equivalent, it is clear that 
    \begin{align*}
         |\nabla \eta|= |\nabla \eta|_{g_t} \leq \frac{C}{r}
    \end{align*}
    holds forever. 
    By (\ref{eqn:h0add}), it is clear that $\xi$ solves the differential equation 
    $\partial_t \xi- L \xi = \cR + \nabla^* \cS$ in the weak sense with the initial value $\xi_0=0$. 
    Therefore, for some constant $A>0$ to be determined, we have
    \begin{align*}
       0 &\leq \frac{1}{2} \int_M \eta ^2 e^{-Ar^2}|\xi(\cdot, r^2)|_{g_{r^2}}^2 dg_{r^2}=\frac{1}{2} \int_M \eta ^2 e^{-Ar^2} |\xi(\cdot, r^2)|_{g_{r^2}}^2 dg_{r^2}
       -\frac{1}{2} \int_M \eta ^2 |\xi(\cdot, 0)|_{g_0}^2 dg_{0}\\
       &=\frac{1}{2} \int_0^{r^2} \partial_t \left\{\int_M \eta^2 e^{-At}|\xi|_{g_t}^2 dg_t \right\} dt\\
       &=\frac{1}{2} \int_0^{r^2} \int_M  \eta^2 e^{-At} 
       \left( 
       2\langle \dot{\xi}, \xi \rangle -A|\xi|^2+ 4 R^{ik}\xi_{ij}\xi_{kj} -R|\xi|^2 \right) dg_t dt,
    \end{align*}
    where $R$ is the scalar curvature. 
    By the assumption of curvature bound, we may choose $A=A_n=100n^2$ such that
    \begin{align*}
        -A|\xi|^2+ 4 R^{ik}\xi_{ij}\xi_{kj}- R |\xi|^2 \leq (-A+4|Rc|+|R|) |\xi|^2 \leq 0. 
    \end{align*}
    It follows that
    \begin{align}
      \int_0^{r^2} \int_M  \eta^2 e^{A(r^2-t)} 
       \langle -\dot{\xi}, \xi \rangle  dg_t dt  
       \leq \frac{1}{2} \int_M \eta^2 |\xi(\cdot, r^2)|_{g_{r^2}}^2 dg_{r^2}. 
    \label{eqn:SI01_11}   
    \end{align}
    Recall that
    \begin{align*}
         \dot{\xi}=\Delta_L \xi +\cQ [h]=\Delta \xi + E*\xi + \cR [h] + \nabla^* \cS [h].
    \end{align*}
    Plugging it into (\ref{eqn:SI01_11}) yields 
    \begin{align*}
      &\quad \int_0^{r^2} \int_M  \eta^2 e^{A(r^2-t)} 
       \langle -\Delta \xi, \xi \rangle_{g_t}  dg_t dt \\
      &\leq \frac{1}{2} \int_M \eta^2 |\xi(\cdot, r^2)|_{g_{r^2}}^2 dg_{r^2}
         + \int_0^{r^2} \int_M  \eta^2 e^{A(r^2-t)} \left( E*\xi + \cR [h] + \nabla^* \cS [h] \right) dg_t dt.  
    \end{align*}
    Omitting the obvious volume elements and doing integration by parts imply 
     \begin{align*}
      &\quad \int_0^{r^2} \int_M   e^{A(r^2-t)}  \eta^2 |\nabla \xi|^2     \\
      &\leq \frac{1}{2} \int_M \eta^2 |\xi(\cdot, r^2)|_{g_{r^2}}^2 dg_{r^2}
         + \int_0^{r^2} \int_M   e^{A(r^2-t)} \left\{ \eta^2 (|E||\xi| + |\cR|) + 2\eta|\nabla \eta| (|\cS|+  |\nabla \xi|) \right\}. 
    \end{align*}
    Since $|E|\leq C$ and $|\nabla \eta| \leq \frac{C}{r}$, applying the elementary Cauchy-Schwarz inequality on the right hand side of the above inequality yields 
    \begin{align}
		\int_0^{r^2} \int_{B(x_0,r)}|  \nabla \xi |^2 
        &\leq C \int_0^{r^2} \int_{B(x_0,2r)} 
        \left\{ (r^{-2}+1) |\xi|^2+|\cR| |\xi|+ |\cS|^2 \right\},  \label{eqn:SI01_13}
	\end{align}
    which is almost the same as (\ref{eqn:SI01_12}), except that the integration is done with evolving metrics $g_t$. 

    Recall that the metrics $g_t$ are uniformly equivalent to $g_0$ by the assumption of curvature bound (\ref{eqn:SI01_6}). Following the same volume comparison and covering argument as in the proof of (\ref{eqn:SC01_12}) and (\ref{eqn:SC01_7}), we can use (\ref{eqn:SI01_19}) to obtain 
     \begin{align}
       \frac{\| \nabla \xi \|_{L^2(P(x_0,r^2))}}{|B(x_0,r)|^{\frac{1}{2}}} 
       \leq C  \|\cQ \|_{Y_T}.   \label{eqn:SI01_16}
    \end{align}

	\textit{Step 3. Estimate of $\|\nabla \xi\|_{L^{n+4}(\Omega(x_0,r^2))}$.}\\
	
	Suppose $(x,t)\in \Omega(x_0,r^2)   $ as in \textit{Step 1} and we split the integral into two parts.
	\begin{align*}
			|\nabla \xi |(x,t) = {}& \int_{M \times [0,t]} |\nabla_x K(x,t;y,s)\cR(y,s) + \nabla_x \nabla_y K(x,t;y,s)\cS(y,s)| \dd g_s(y) \dd s\\
			\leq {}&  \underbrace{\int_{ M\times[0,t] \cap \Omega(x_0,r^2)}   |\nabla_x K(x,t;y,s)\cR(y,s)  + \nabla_x \nabla_y K(x,t;y,s)\cS(y,s)| \dd g_s(y) \dd s}_{Inn(x,t)}\\
			{}& + \underbrace{\int_{M \backslash \Omega(x_0,r^2) } |\nabla_x K(x,t;y,s)\cR(y,s)  + \nabla_x \nabla_y K(x,t;y,s)\cS(y,s)| \dd g_s(y) \dd s}_{Out(x,t)}.
	\end{align*} 
	One can argue as in Step 1 above and obtain that 
    $$Out (x,t) < C t ^{-\frac{1}{2}} \|  \cQ  \|_{Y_T}. $$ 
    Thus,
	\begin{align}\label{estm:Out}
			r^{\frac{n+2}{n+4}}|B(x_0,r) |^{-\frac{1}{n+4}} \|  Out  \|_{L^{n+4}(\Omega(x_0,r^2))} \leq C   \|  \cQ   \|_{Y_T} .
	\end{align} 
	
	So we only need to estimate the $L^{n+4}(\Omega(x_0,r^2))$ norm 
    of $Inn$. Without loss of generality, we assume that $\cR, \cS $ are supported in $\Omega(x_0, r^2) $. For any $(x,t) \in \Omega (x_0,r^2)$ we define $I_R(x,t) $ and $I_S(x,t) $ such that $Inn (x,t) = I_R(x,t) +  I_S (x,t) $ as follows. 
    \begin{align*}
      &I_R (x,t):=\int_{\frac{r^2}{2}} ^{t} \int_{M} \nabla_x  K (x,t;y,s)\cR(y,s)\dd g_s (y) \dd s,\\
      &I_S (x,t):=\int_{\frac{r^2}{2}}  ^{t} \int_{M} \nabla_x \nabla_y K (x,t;y,s)\cS(y,s)\dd g_s (y) \dd s.
    \end{align*}
	The term $I_R$ can be estimated the same as~\eqref{eqn:SA09_5} and~\eqref{1estm:I_R bar}. We have
    \begin{align}
        r^{\frac{n+2}{n+4}}|B(x_0,r) |^{-\frac{1}{n+4}}	\| I_R \|_{L^{n+4}(\Omega(x_0,r^2))} 
        \leq C  \|\cR\|_{Y_{B(x_0,r)}^0}. 
        \label{eqn:SI01_20}
	\end{align}
    The estimate of $I_S$ is more delicate. We define  
    \begin{align*}
        \xi_S(x,t)
        := \int_{\frac{r^2}{2}} ^{t} \int_{M} \nabla_y K (x,t;y,s)\cS(y,s)\dd g_s (y) \dd s.
    \end{align*}
    Then we have
    \begin{align}
        \nabla_x  \xi_S(x,t) = I_S(x,t). \label{eqn:SI01_17}
    \end{align}
	Note that $\xi_S$ satisfies the equation
    \begin{align}
        (\partial_t-L) \xi_S= \nabla^\ast \cS.   \label{eqn:SI04_8} 
    \end{align}
    with initial value $\xi_S|_{t=r^2/2} =0$.

    Since $\xi_S$ satisfies (\ref{eqn:SI04_8}), we can apply the same method to deduce (\ref{eqn:SI01_13}) to obtain 
    \begin{align}
		\int_0^{r^2} \int_{B(x_0,r)}|  \nabla \xi_S |^2 
        \leq C \int_0^{r^2} \int_{B(x_0,2r)} 
        \left\{ (r^{-2}+1) |\xi_S|^2+ |\cS|^2 \right\}. 
    \label{eqn:SI04_10}    
	\end{align}
    The same as (\ref{eqn:SI04_9}), we have the following.
      \begin{align}
     \|\xi_S\|_{L^2(\Omega(x_0, r^2))}
     \leq C r \|\cS\|_{L^2(\Omega(x_0, r^2))}.  
    \end{align}
    Consequently, 
	\begin{align*}
		 \int_{\frac{r^2}{2}}^{r^2}\int_{B_{g_0}(x_0,r)} |\nabla \xi_S|^2 dg_t dt
		\leq C  \|  \cS  \| ^2_{L^2(\Omega(x_0, r^2))}. 
	\end{align*}
	By (\ref{eqn:SI01_17}), the above inequality can be understood as 
	\begin{align}
			{}& \left  \|  \int_0^t\int_{M} \nabla_x^{g_t} \nabla_y^{g_s} K (x, t;y,s)\cS(y,s)\dd g_s(y) \dd s \right \|_{L^2(\Omega(x_0, r^2))}
            \leq C  \|  \cS  \|_{L^2(\Omega(x_0, r^2))}.
    \label{estm:I_S L^2}
	\end{align}
	This means that the integral operator
    $\mathcal{K}=\nabla_x^{g_t} \nabla_y^{g_s} K$ is bounded in space $L^2(\Omega(x_0,r^2))$. 
 Using the Calder\'on-Zygmund inequality of Han-Wang (cf.~\cite{hanwang_CZ}) with $p=n+4>2$, we obtain
	\begin{align*}
			 \|I_S\|_{L^{n+4}(\Omega(x_0, r^2))}
			\leq C  \|\cS \|_{L^{n+4}(\Omega(x_0, r^2))}, 
	\end{align*}
	which yields
    \begin{align}
       r^{\frac{n+2}{n+4}}|B(x_0,r) |^{-\frac{1}{n+4}} \|  I_S  \|_{L^{n+4}(\Omega(x_0, r^2))}
	\leq  C  \|  \cS  \|_{Y^1(x,r)}. 
    \label{eqn:SI01_18} 
    \end{align}
	Combining estimates of $Out$ , $I_R$ and $I_S $ in \eqref{estm:Out}, \eqref{eqn:SI01_20} and \eqref{eqn:SI01_18}, we obtain
	$$r^{\frac{n+2}{n+4}}|B(x_0,r) |^{-\frac{1}{n+4}} \|  \nabla \xi  \|_{L^{n+4}(\Omega(x_0,r^2))} \leq C  \|  \cQ   \|_{Y_T}. $$ 
\end{proof}

\begin{prop}
\label{clm:Y<X^2}
    There exists a constant $\delta<1 $ such that if $ \|h\|_{X_T} <\delta $, then for each $x \in M$
    and $r \in (0, \sqrt{T}]$, we have
	\begin{align}
    \label{Y<X}
			 \|\cQ [h]\|_{Y_{B(x,r)}} \leq  C(n, T) \|h\|_{X_{B(x,r)}}^2. 
	\end{align}
	Moreover, if $ \max \{\|v\|_{X_T}, \|w\|_{X_T} \} < \delta$, then for each $x \in M$
    and $r \in (0, \sqrt{T}]$, we have
	\begin{align}
    \label{Y<X_difference}
			 \|\cQ [v] - \cQ [w]\|_{Y_{B(x,r)}} 
             \leq C(n,T) (\|v\|_{X_{B(x,r)}} + \|w\|_{X_{B(x,r)}}) \|v-w\|_{X_{B(x,r)}}. 
	\end{align}
\end{prop}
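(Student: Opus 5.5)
The plan is to repeat the proof of Proposition~\ref{1clm:Y<X^2} essentially verbatim, once we check that the two structural inputs it uses remain valid in the evolving setting. The first input is the pair of pointwise bounds (\ref{eqn:SI01_7}) on $\cS[h]$ and $\cR[h]$, together with their difference versions
\begin{align*}
|\cS[v]-\cS[w]| &\leq C\{|v-w||\nabla v|+|w||\nabla(v-w)|\},\\
|\cR[v]-\cR[w]| &\leq C\{|\nabla(v-w)|(|\nabla v|+|\nabla w|)+|v-w|(|v|+|w|)+|\nabla(v-w)||v|+|\nabla w||v-w|\}.
\end{align*}
The second input is that the local $X$ and $Y$ norms are built from $L^p$ norms against a measure comparable to the static one.

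For the first input, I would inspect the explicit formulas in Proposition~\ref{prop:RD_operator_compute}, now with $g=g_t$ and $\bar g=g_t$, so $\bar\gamma=0$. The terms $I$, $II$, $III$ and $-u^{qp}_{,q}h_{ij,p}$ involve only $u$, $\nabla u$, $\boldsymbol\gamma$, $\hat\gamma$ and $\Rm(g_t)$. The key point is that no derivative of curvature appears in $\cR$. Since $|h|<\delta<\frac12$ gives $|u|\leq C|h|$, $|\nabla u|\leq C|\nabla h|$ and $|\hat g^{-1}|\leq C$, the bound $|\Rm_{g_t}|\leq 1$ from (\ref{eqn:SI01_6}) alone suffices. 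Each term is at least quadratic in $(h,\nabla h)$, with at most two factors of $\nabla h$. The difference estimates then follow by writing $\hat g_v^{-1}-\hat g_w^{-1}=\hat g_v^{-1}(w-v)\hat g_w^{-1}$ and telescoping the multilinear expressions. This is exactly the algebra of the static case; only the curvature bound changes, and only the zeroth-order bound on curvature was used there.

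For the second input, I would fix $x$ and $r\leq\sqrt T$. Balls are $B_{g_0}(x,r)$, norms are measured in $g_t$, and volumes in $dg_t$, with $C^{-1}dg_0\leq dg_t\leq Cdg_0$ and $C^{-1}g_0\leq g_t\leq Cg_0$ on $[0,1]$. Every H\"older step in the proof of Proposition~\ref{1clm:Y<X^2} is then carried out on the fixed domains $P(x,r^2)$ and $\Omega(x,r^2)$ with the common measure $dg_t\,dt$, so H\"older applies unchanged. Concretely:
\begin{itemize}
\item $\||\nabla h|^2\|_{Y^0}\leq C\|h\|_X^2$ comes from pairing $L^2\cdot L^2$ in $L^1(P)$ and $L^{n+4}\cdot L^{n+4}$ in $L^{\frac{n+4}{2}}(\Omega)$.
\item $\||h|^2\|_{Y^0}\leq Cr^2\|h\|^2_{L^\infty}$ uses $|P(x,r^2)|\leq Cr^2|B_{g_0}(x,r)|$ together with the same bound for $\Omega$.
\item $\|\cS[h]\|_{Y^1}\leq \|h\|_{L^\infty}\|\nabla h\|_{Y^1}$.
\end{itemize}
The difference estimate is handled term by term, as for $I$, $II$ and $III$ in the static proof. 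The comparison constants enter only multiplicatively, which is why the constant is written $C(n,T)$ (really $C(n)$ since $T\leq1$).

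The main obstacle I expect is bookkeeping rather than analysis. One must confirm that the decomposition $\cQ=\nabla^*\cS+\cR$ with $\nabla^*=\nabla^{*,g_t}$ is the one used in the $Y$-norm, which holds by definition. One must also confirm that $\cR$ contains no $\nabla\Rm$ term, which would be unbounded near $t=0$ along the flow and would spoil the $L^\infty$ coefficient bound. Inspecting $III$ and the commutation used to produce $u^{pq}h_{ij,pq}$ shows that only $\Rm$ appears, so the bound holds.

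Finally, since $\|\cQ\|_{Y_{B(x,r)}}\leq\|\cR\|_{Y^0_{B(x,r)}}+\|\cS\|_{Y^1_{B(x,r)}}$ for the particular choice $f_0=\cR[h]$, $f_1=\cS[h]$, this yields both (\ref{Y<X}) and (\ref{Y<X_difference}).
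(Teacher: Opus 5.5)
Your proposal follows the paper's route. The paper omits the proof as "the same as Proposition~\ref{1clm:Y<X^2}", and your two checks are exactly what justify that claim: only $|\Rm|\le 1$ enters $\cR$ because $\bar\gamma=0$, and the uniform equivalence $C^{-1}dg_0\le dg_t\le C\,dg_0$ lets every H\"older step go through on $P(x,r^2)$ and $\Omega(x,r^2)$.

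One small correction: your displayed pointwise bound for $|\cR[v]-\cR[w]|$ (which coincides with (\ref{eqn:SI04_0})) is incomplete. Telescoping the $\hat g^{-1}$ coefficients in the $\nabla h*\nabla h$ terms also produces cubic terms $|v-w|(|\nabla v|^2+|\nabla w|^2)$, which that bound omits. These terms are harmless, since $\||v-w||\nabla w|^2\|_{Y^0_{B(x,r)}}\le \|v-w\|_{L^\infty(P(x,r^2))}\||\nabla w|^2\|_{Y^0_{B(x,r)}}\le C\delta\,\|w\|_{X_{B(x,r)}}\|v-w\|_{X_{B(x,r)}}$.
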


The proof of this proposition is the same as that of Proposition~\ref{1clm:Y<X^2}. So we omit it. 

\begin{prop}
\label{clm:X<infty}
	We have
    \begin{align}
      \left\|  \int_{M} K(x,t;y,0) h_0 (y) \dd g_0 (y) \right\|_{X_T} 
      \leq C \|h_0\|_{L^\infty (M)}, 
    \label{eqn:SI04_12}  
    \end{align}
	where $C=C(n,T)$.
\end{prop}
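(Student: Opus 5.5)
Set $H(x,t):=\int_M K(x,t;y,0)h_0(y)\,\dd g_0(y)$, which solves $(\partial_t-L)H=0$ with $H(\cdot,0)=h_0$. I will bound the three pieces of $\|H\|_{X_{B(x_0,r)}}$ separately, for arbitrary $x_0\in M$ and $r\in(0,\sqrt T]$.

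\emph{The $L^\infty$ term.} By the Gaussian bound of Proposition~\ref{prop:SI06_flow}, $\int_M|K(x,t;y,0)|\,\dd g_0(y)\le C$ uniformly. This follows from the same covering argument used for (\ref{eqn:SA12_4}): sum $e^{-d^2(x,p_i)/(4Dt)}$ over a $\sqrt t$-net and use volume comparison. Hence $|H|\le C\|h_0\|_{L^\infty}$ on $M\times[0,T]$.

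\emph{The $L^2$ gradient term.} I repeat the energy argument of Step 2 of Proposition~\ref{clm:X<Y}, with the weight $e^{-At}$ to absorb the evolving volume form and the $Rc*\xi*\xi$ terms. The source is now zero but the initial data is nonzero, which gives
\begin{align*}
\int_0^{r^2}\int_{B(x_0,r)}|\nabla H|^2\le C\int_{B(x_0,2r)}|h_0|^2\,\dd g_0+C\int_0^{r^2}\int_{B(x_0,2r)}(r^{-2}+1)|H|^2 .
\end{align*}
The right side is at most $C|B(x_0,2r)|\,\|h_0\|_{L^\infty}^2$. Volume doubling, valid since $r\le1$ and curvature is bounded, then gives $\|\nabla H\|_{L^2(P(x_0,r^2))}\le C|B(x_0,r)|^{1/2}\|h_0\|_{L^\infty}$.

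\emph{The $L^{n+4}$ gradient term.} On $\Omega(x_0,r^2)$ we have $t\ge r^2/2>0$, so no singularity remains. I would use the pointwise bound
\begin{align*}
|\nabla H|(x,t)\le \|h_0\|_{L^\infty}\int_M|\nabla_x^{g_t}K(x,t;y,0)|\,\dd g_0(y)\le Ct^{-1/2}\|h_0\|_{L^\infty}.
\end{align*}
This follows from the gradient part of (\ref{eqn:SI06_7flow}) together with the same covering sum. On $\Omega(x_0,r^2)$ it gives $|\nabla H|\le Cr^{-1}\|h_0\|_{L^\infty}$, and therefore
\begin{align*}
r^{\frac{n+2}{n+4}}|B(x_0,r)|^{-\frac1{n+4}}\|\nabla H\|_{L^{n+4}(\Omega(x_0,r^2))}\le Cr^{\frac{n+2}{n+4}}\,r^{-1}\,(r^2)^{\frac1{n+4}}\|h_0\|_{L^\infty}=C\|h_0\|_{L^\infty}.
\end{align*}
The powers cancel exactly.

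\emph{Main obstacle.} The delicate step is the $x$-gradient bound $|\nabla^{g_t}_xK(x,t;y,0)|\le C t^{-1/2}|B(x,\sqrt t)|^{-1}e^{-d^2/(4Dt)}$ when $s=0$. Proposition~\ref{prop:SI06_flow} states it for all $0<s<t$, but its proof relies on Shi's estimates, which degenerate near time $0$. If that bound were unavailable uniformly, I would argue locally instead. Apply Shi-type interior estimates on parabolic cylinders of size $\sqrt t$ around $(x,t)$: rescale to unit size, where the rescaled flow has bounded curvature and bounded derivatives on the later half-interval, and use standard local parabolic gradient estimates for $(\partial_t-\Delta_L)H=0$. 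This yields $\sqrt t|\nabla H|(x,t)\le C\sup|H|\le C\|h_0\|_{L^\infty}$ directly, which suffices for the last step. The constant may depend on $T$, which is allowed in the statement.

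Taking the supremum over $x_0$ and $r$ of the three estimates gives (\ref{eqn:SI04_12}).
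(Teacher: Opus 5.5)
Your proposal is correct and is essentially the argument the paper intends when it says the proof is ``almost the same'' as that of Proposition~\ref{prop:K_conv_Z}. It uses the $L^1$ bound on $K$ for the sup term, the cut-off energy estimate (now carrying the initial-data term $\int\eta^2|h_0|^2$) for the $L^2$ gradient term, and the pointwise bound $|\nabla H|\le Ct^{-1/2}\|h_0\|_{L^\infty}$ on $\Omega(x_0,r^2)$ for the $L^{n+4}$ term, where the powers of $r$ cancel exactly as you compute. Your fallback for the $x$-gradient near $s=0$ is not needed, since Proposition~\ref{prop:SI06_flow} already asserts the bound on $\nabla^{g_t}_x K$ for all $0<s<t\le 1$ (only the $y$-gradient required the extra work of Lemma~\ref{lem:SI06_1}).
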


\begin{proof}
   This is almost the same as the proof of Proposition~\ref{prop:K_conv_Z}.
\end{proof}

Then we can apply the contraction mapping principle to solve the integral equation~\eqref{equ:int_ptb}. 

\begin{thm}
\label{thm:SJ15_11}
For any metric $\hat{g}_0$ on $M$ with $\|\hat{g}_0 - g_0\|_{L^{\infty}(M)} < \epsilon $, there exists a smooth 
solution 
$(h_t)_{t\in [0,T]} \in X_T$ of~\eqref{equ:int_ptb}, with the initial condition $h_0=\hat{g}_0-g_0$. 
Moreover, the following estimates hold. 
    \begin{align}
      \|h\|_{L^\infty(M \times [0, T])} \leq \|h\|_{X_T} 
      \leq C  \|h_0\|_{L^{\infty}(M)}.
    \label{eqn:SI04_11}  
    \end{align}
\end{thm}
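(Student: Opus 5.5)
The plan is to mirror the proof of Theorem~\ref{1thm:int_pert_solu_exist}, with the constant forcing term $Z$ replaced by the free evolution of the initial data. Define
\begin{align*}
\Phi(h)(x,t) := \int_{M} K(x,t;y,0) h_0(y) \dd g_0(y) + \int_{M\times[0,t]} \big( K(x,t;y,s)\cR[h](y,s) + \nabla_y K(x,t;y,s)\cS[h](y,s)\big) \dd g_s(y)\dd s,
\end{align*}
and work on the ball $B_{\rho,T}:=\{h\in X_T: \|h\|_{X_T}\le \rho\}$ with $\rho := 2C_1\|h_0\|_{L^\infty(M)}$. Here $C_1=C(n,T)$ is the constant of Proposition~\ref{clm:X<infty}. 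A fixed point of $\Phi$ is exactly a solution of \eqref{equ:int_ptb} with initial value $h_0$. Note that $T$ is now fixed (say $T=1$ after rescaling, or the time on which \eqref{eqn:SI01_6} holds), and the smallness comes from $\epsilon$ rather than from $T$.

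\emph{Self-map.} Let $h\in B_{\rho,T}$ with $\rho<\delta$, where $\delta$ is from Proposition~\ref{clm:Y<X^2}; this also keeps $|h|<\frac12$, so \eqref{eqn:SI01_7} applies. By Proposition~\ref{clm:X<infty}, Proposition~\ref{clm:X<Y} and \eqref{Y<X} (taking the supremum over $x,r$),
\begin{align*}
\|\Phi(h)\|_{X_T} \le C_1\|h_0\|_{L^\infty} + C_2 C_3 \|h\|_{X_T}^2 \le \tfrac{\rho}{2} + C_2C_3\rho^2 \le \rho,
\end{align*}
provided $C_2C_3\rho\le \frac12$. Since $\rho\le 2C_1\epsilon$, it suffices to choose $\epsilon$ small in terms of $n,T$.

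\emph{Contraction.} For $v,w\in B_{\rho,T}$, the linear term cancels. Proposition~\ref{clm:X<Y} together with \eqref{Y<X_difference} gives
\begin{align*}
\|\Phi(v)-\Phi(w)\|_{X_T}\le C_2C_3\cdot 2\rho\,\|v-w\|_{X_T}\le \tfrac12\|v-w\|_{X_T}
\end{align*}
after possibly shrinking $\epsilon$ again. The Banach fixed point theorem then gives a unique fixed point $h\in B_{\rho,T}$. Its norm satisfies $\|h\|_{X_T}\le\rho=2C_1\|h_0\|_{L^\infty}$, which is \eqref{eqn:SI04_11}. The inequality $\|h\|_{L^\infty}\le\|h\|_{X_T}$ holds by the definition of the norm, because the sets $P(x,r^2)$ cover $M\times(0,T)$.

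\emph{Smoothness.} This is the only step that needs real thought, since $h_0$ is merely continuous or bounded and the fixed point is a priori only in $X_T$. For positive times, $h$ is a bounded weak solution of the strictly parabolic quasilinear system \eqref{equ:ptb_RD} with $|h|<\frac12$, and the $X_T$ bound gives local $L^{n+4}$ control of $\nabla h$. I would first bootstrap interior regularity for $t>0$: apply parabolic $L^p$ estimates to get H\"older continuity of $\nabla h$, then Schauder theory to get smoothness. Alternatively, one can differentiate the integral representation using the heat kernel bounds of Proposition~\ref{prop:SI06_flow}, since the inhomogeneity is already H\"older by the previous step. Continuity at $t=0$ follows from the first term of $\Phi$ converging to $h_0$, which is where continuity of $\hat{g}_0$ is used; the Duhamel terms are $O(\rho^2)$ and vanish as $t\to0$ by Step 1 of Proposition~\ref{clm:X<Y} applied on short time intervals.

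I expect the main obstacle to be this regularity bootstrap, together with checking that the constants of Propositions~\ref{clm:X<Y}--\ref{clm:X<infty} are uniform so that $\epsilon$ depends only on $n$ and $T$.
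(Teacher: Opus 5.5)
Your fixed-point argument is correct and is the paper's argument: the same map $\Phi$ on a small ball in $X_T$, with the same three inputs (Propositions~\ref{clm:X<Y}, \ref{clm:Y<X^2}, \ref{clm:X<infty}). The only difference is bookkeeping, and your version is the more careful one.

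The paper does the following.
\begin{itemize}
\item It takes $\|\phi_0\|_{L^\infty}<\delta$ and works in $B_{\delta,T}$.
\item It writes the self-map bound as $\|\Phi(h)\|_{X_T}<C\|h\|_{X_T}^2$. This omits the free term $\int_M K(x,t;y,0)\phi_0\,\dd g_0$, which can be as large as $C_1\|\phi_0\|_{L^\infty}$.
\item It recovers \eqref{eqn:SI04_11} afterwards by absorbing $C\|h\|_{X_T}^2$ into the left side.
\end{itemize}
Your choice $\rho=2C_1\|h_0\|_{L^\infty}$ keeps the free term and makes explicit that $\epsilon$ must be smaller than $\delta/(2C_1)$. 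It also gives \eqref{eqn:SI04_11} directly. If you also want uniqueness in the larger ball $B_{\delta,T}$, the paper's absorption step shows that any fixed point there already lies in $B_{\rho,T}$.

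The one step that does not work as written is the claim that the Duhamel term vanishes as $t\to0$ by Step~1 of Proposition~\ref{clm:X<Y}. On $[0,t]$, Step~1 only gives $\sup_M|\xi(\cdot,t)|\le C\|\cQ[h]\|_{Y_t}$. Let $G_t(h)$ denote the two gradient parts of $\|h\|_{X_t}$. Then $\|\cQ[h]\|_{Y_t}\le C(t\rho^2+\rho\, G_t(h)+G_t(h)^2)$. The quantities in $G_t$ are scale invariant, so this bound is $O(\rho^2)$ but not $o(1)$. Indeed, for a continuous $h_0$ that oscillates on finer and finer scales toward infinity, $G_t$ of the free evolution does not tend to zero at all.

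To get $h_t\to h_0$ you need an additional step.
\begin{itemize}
\item Write $h=u+\xi$, where $u$ is the free evolution.
\item Steps~2--3 of Proposition~\ref{clm:X<Y} give $G_t(\xi)\le C\|\cQ[h]\|_{Y_t}$. Absorbing the $\rho\,G_t(h)+G_t(h)^2$ terms then yields $G_t(h)\le 2G_t(u)+Ct\rho^2$.
\item Show $G_t(u)\to0$. This is a VMO-type property of the free evolution of a uniformly continuous tensor, proved by local energy and interior gradient estimates applied to $u-c$.
\item For merely continuous $h_0$, you must localize this argument using the Gaussian tails, and you obtain only locally uniform convergence.
\end{itemize}

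The paper gives no argument here either; it says only that smoothness and the initial condition are ``clear from convolution''. So this is a gap you share with the paper, not a departure from it. Your interior bootstrap for $t>0$ already goes beyond what the paper provides.
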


\begin{proof}
    Define $\phi_0 :=\hat{g}_0-g_0$. Suppose $\|\phi_0\|_{L^{\infty}(M)}<\delta$ where $\delta$ is a small positive constant to be determined later. 
    
      Define $\Phi(\cdot )$ as the following operator. 
	\begin{align}
    \begin{aligned}
			\Phi(h)  
            &:=\int_{M} K(x,t;y,0) \phi_0(y) \dd g_0(y) \\
            &\quad +\int_{M\times [0,t]} \big( K(x,t;y,s)\cR [h](y,s) + \nabla_y K(x,t;y,s) \cS [h](y,s) \big)  \dd g_s(y) \dd s.
    \end{aligned}
    \label{def:int_oprt}
	\end{align} 
    If $\|h\|_{X_T}<\delta$, it follows from 
    Proposition~\ref{clm:X<Y} and Proposition \ref{clm:Y<X^2} that
    \begin{align*}
      \|\Phi(h)\|_{X_T} < C \|h\|_{X_T}^2<C\delta^2< \delta. 
    \end{align*}
    Therefore, $\Phi$ maps $B_{\delta,T}:=\{h\in X_T: \|h\|_{X_T} \leq \delta\}$ to itself. 

    Now we choose two elements $v,w \in B_{\delta,T}$. 
	Applying Proposition~\ref{clm:X<Y} and Proposition~\ref{clm:Y<X^2} again, we have
	\begin{align*}
			&\quad \left  \|  \Phi_{\phi_0 }(v ) - \Phi_{\phi_0}(w) \right  \|_{X_T} \\
			&=\bigg\|   \int_{M\times [0,t]} \bigg( K(x,t;y,s)(\cR [v] - \cR [w])(y,s) 
			  +\nabla_y K(x,t;y,s) (\cS [v] - \cS [w])(y,s) \bigg)  \dd g_s(y) \dd s \bigg\|_{X_T}\\
            &\leq   C  \|  \cQ [v] - \cQ [w]  \|_{Y_T } \leq C (\|v\|_{X_T} + \|w\|_{X_T}) \|v-w\|_{X_T}.
	\end{align*}
	Now we take $\delta$ small enough such that $ 2 C\delta <\frac12$.
    It follows from the above inequality that
	\begin{align*}
			 \|  \Phi_{\phi_0 }(v ) - \Phi_{\phi_0}(w)  \|_{X_T} 
			\leq 2C \delta \|v-w\|_{X_T}
            <\frac12  \|v-w\|_{X_T}.
	\end{align*}
    Therefore, $\Phi_{\phi_0} $ is a contraction mapping that maps from $B_{\delta, T}$ to $B_{\delta,T}$.
    According to Banach's fixed point principle, there exists a unique $h \in B_{\delta, T}$ that
 satisfies $h=\Phi(h)$.  In other words, $h$ satisfies~\eqref{equ:int_ptb}. 
    
    Since the heat kernel $K$ is smooth, it is clear from convolution that $h$ is smooth and satisfies the initial condition $h_0=\phi_0$. 
    Using Proposition~\ref{clm:X<Y}, Proposition~\ref{clm:Y<X^2} 
    and Proposition~\ref{clm:X<infty} again, we know $h$ satisfies
	\begin{align*}
			\|h\|_{X_T}
			\leq 
            \left\|  \int_{M} K(x,t;y,0) h_0 (y) \dd g_0 (y) \right\|_{X_T} 
             +C\|h\|_{X_T}^2, 
	\end{align*}
    which implies that
    \begin{align*}
       \frac12 \|h\|_{X_T}
       \leq \left(1-C\|h\|_{X_T}\right) \|h\|_{X_T}
       \leq \left\|  \int_{M} K(x,t;y,0) h_0 (y) \dd g_0 (y) \right\|_{X_T} 
       \leq C  \|  h_0  \|_{L^\infty}, 
    \end{align*}
    where we applied (\ref{eqn:SI04_12}) in the last inequality.  By the definition of the $X_T$-norm, it is clear that (\ref{eqn:SI04_11}) follows directly from the above inequality. 
\end{proof}

\section{Proof of the main theorem}
\label{sec:final}

If $g_0$ is regular enough, say $\sum_{k=0}^3 |\nabla^k Rm| \leq C$, then the main theorem already follows from the combination of results in Section~\ref{sec:exist} and Section~\ref{sec:cd}.
Therefore, we only need to drop the assumptions $\sum_{k=0}^3 |\nabla^k Rm| \leq C$. This can be done in many ways.  We provide more details as follows. 

\begin{proof}[Proof of Theorem~\ref{thm:main}]
 It suffices to show the short-time existence of the Ricci flow solution initiated from a metric $g_0$ with $|Rm|_{g_0} \leq 1$.   For each such metric, we can find smooth metrics $g_i$ approximating $g_0$ without using Ricci flow.
 In fact, by embedding method, we can make $g_i$ satisfy (cf.~\cite{abresch1988glatten} or~\cite{petersen1999controlled}): 
    \begin{itemize}
        \item $e^{-\epsilon_i} g_0 \leq g_i \leq e^{\epsilon_i} g_0$, with $\epsilon_i \to 0$. 
        \item $|Rm|_{g_i} \leq \Lambda$. 
        \item $\sum_{k=0}^{3} |\nabla^k Rm|_{g_i} \leq C(n, \Lambda, \epsilon_i)$. 
    \end{itemize}
    Applying Theorem~\ref{1thm:int_pert_solu_exist} and the standard diffeomorphism action, we obtain the Ricci flow solutions 
    $\{g_{i,t}\}_{0 \leq t \leq T_i}$.
    By further extending the Ricci flow if possible, we can assume $T_i$ is the maximum existence time.   
    The maximum principle for Riemannian curvature evolution implies that $T_i \geq \frac{C_n}{\Lambda}$, which is uniformly away from $0$. Let $T:=\frac{C_n}{2\Lambda}$. 
    Then we obtain a family of Ricci flow solutions $\{g_{i,t}\}_{0 \leq t \leq T}$ on $M$ with bounded curvature $|Rm|_{g_{i,t}} \leq C \Lambda$.
    Using Shi-type estimate, we have higher order curvature derivatives bound
    $t^{\frac{k}{2}}|\nabla^k Rm| \leq C(k,n, \Lambda)$. Taking smooth limit of $g_{i,t}$, we obtain a Ricci flow solution $\{g_t\}_{0<t\leq T}$. 
    Also, the curvature estimate and evolution of metrics implies $\lim_{t \to 0} g_t=g_0$. 
    Therefore, we obtain a Ricci flow solution $\{g_t\}_{0 \leq t \leq T}$, which initiates from $g_0$ and has bounded sectional curvature. 

    There are other ways to obtain the short-time existence directly. 
    For example, by further work on the heat kernel estimates and Caldr\'on-Zygmund inequality (cf.~\cite{hanwang_CZ}), one can obtain the result of Theorem~\ref{1thm:int_pert_solu_exist} directly only 
    under the curvature $|Rm|_{g_0} \leq 1$. 
    
    The short-time existence is proved.
    The uniqueness and continuous dependence follow from Theorem~\ref{thm:SJ15_11}. 

\end{proof}
  
\bibliography{refmain}

\vskip10pt

Jingbin Cai, School of Mathematical Sciences, University of
Science and Technology of China, No. 96 Jinzhai Road, Hefei, Anhui Province, 230026, China;
binge@mail.ustc.edu.cn.\\

Bing Wang, Institute of Geometry and Physics, School of Mathematical Sciences, University of Science and Technology of China, No. 96 Jinzhai Road, Hefei, Anhui Province, 230026, China; Hefei National Laboratory, Hefei, 230088, China. topspin@ustc.edu.cn.\\

\end{document}